\author{R. Abedin, S. Maximov, A. Stolin*}
\title{Generalized classical Yang-Baxter equation and regular decompositions}
\newcommand\blfootnote[1]{%
  \begingroup
  \renewcommand\thefootnote{}\footnotetext{#1}%
  \addtocounter{footnote}{-1}%
  \endgroup
}
\begin{document}

\maketitle
\thispagestyle{empty}
\begin{abstract}
    The focus of the paper is on constructing new solutions of the generalized classical Yang-Baxter equation (GCYBE) that are not skew-symmetric.
    Using regular decompositions of finite-dimensional simple Lie algebras, we construct Lie algebra decompositions of $\mathfrak{g}(\!(x)\!) \times \mathfrak{g}[x]/x^m \mathfrak{g}[x]$. 
    The latter decompositions are in bijection with the solutions to the GCYBE.
    Under appropriate regularity conditions, we obtain a partial classification of such solutions.
    The paper is concluded with the presentations of the Gaudin-type models associated to these solutions.
\end{abstract}

\blfootnote{(A.S.) Corresponding author. Department of Mathematical Sciences, 
Chalmers University of Technology  and  the University of Gothenburg, 412 96 Gothenburg, Sweden, astolin@chalmers.se.}
\blfootnote{(R.A.) Department of Mathematics, ETH Zürich, 8092 Zürich, Schweiz,  raschid.abedin@math.ethz.ch;}
\blfootnote{(S.M.) Institut für Mathematik, Universität Paderborn,
33098 Paderborn, Germany,\hfill \\ 
 stmax@math.upb.de;}

\newpage
\thispagestyle{empty}
\tableofcontents

\newpage
\section{Introduction}
The $r$-matrix approach is a fundamental method for constructing integrable systems; see e.g.\ \cite{chari_pressley,adler_moerbeke_vanhaecke,babelon_bernard_talon}.
Consider a mechanical system with a phase space $M$ and a Hamiltonian \(H\). If this system admits a Lax representation, i.e.\ the equations of motions with respect to \(H\) are equivalent to 
$$
\frac{dL}{dt} = [P, L]
$$
for some functions \(P\) and \(L\) on \(M\) with values in some Lie algebra \(\mathfrak{L}\), invariant polynomials of this Lie algebra automatically define constants of motion of the system. The involutivity property for these constants of motion is equivalent to the fact that Lax matrix \(L\) satisfies the relation
\begin{equation}%
\label{eq:russian_formula}
    \{ L \ot  L\} = [L \ot 1, r] - [1 \ot L,r^{21}]
\end{equation}
for some function $r$ on \(M\) with values in \(\mathfrak{L} \otimes \mathfrak{L}\). 
The statements we just made can be adjusted in a way so that they remain valid even for some infinite-dimensional Lie algebras $\mathfrak{L}$. 
For example, we consider one of such cases when
\(\mathfrak{L} = \fg(\!(x)\!)^{\oplus n}\), where $\fg(\!(x)\!)$ is the Lie algebra of formal Laurent power series with coefficients in a finite-dimensional simple complex Lie algebra \(\fg\). 
In this case, \(L = L(x)\) and \(r = r(x,y)\) depend on the formal parameters.

The Jacobi identity for the Poisson bracket in \cref{eq:russian_formula} imposes some constraints on the corresponding function $r$. If \(r\) is constant along \(M\) a sufficient condition that these constraints are satisfied is given by the generalized classical Yang-Baxter equation (GCYBE)
\begin{equation*}
    [r^{12}(x_1,x_2),r^{13}(x_1,x_3)] + [r^{12}(x_1,x_2),r^{23}(x_2,x_3)] + [r^{32}(x_3,x_2),r^{13}(x_1,x_3)]  = 0.
\end{equation*}
A key idea of the $r$-matrix method is to start with a solution to GCYBE and construct a mechanical system possessing many integrals of motion. 
For example, to every such solution, one can associate a classical integrable system by considering the Poisson-commuting Hamiltonians
\begin{equation*}
    H_i \coloneqq \sum_{k \neq i} r(u_k, u_i)^{(ki)} + \frac{1}{2}(g(u_i,u_i)^{(ii)} + \tau(g(u_i,u_i))^{(ii)}),
\end{equation*}
where $u_i$ are points inside the domain of the definition of $r$.
This expression can be understood as a quadratic polynomial on \(\fg^{*,\oplus n}\) and therefore as an element of the symmetric algebra \(S(\fg^{\oplus n}) \cong S(\fg)^{\otimes n}\).
If one replaces the space of functions \(S(\fg)^{\otimes n}\) by the universal enveloping algebra \(U(\fg)^{\otimes n}\), one obtains a quantum integrable system which generalizes the Gaudin models; see \cite{skrypnyk_spin_chains,skrypnyk_new_integrable_gaudin_type_systems}.

This motivates the search for solutions to the GCYBE. 
One well-known strategy is to consider certain Lie algebra decompositions into two subalgebras.
For example, a decomposition of the form 
$$
\fg(\!(x)\!) = \fg[\![x]\!] \oplus W
$$
for a subalgebra $W \subseteq \fg(\!(x)\!)$
leads to a generalized $r$-matrix of the form
$$
    r(x,y) = \frac{\Omega}{x-y} + g(x,y),
$$
where $\Omega$ is the quadratic Casimir element of $\fg$.
This idea is further developed in \cite{abedin_maximov_stolin_quasibialgebras}.
More formally, if
$$
r(x,y) = \frac{y^m \Omega}{x-y} + g(x,y) \in (\fg \ot \fg)(\!(x)\!)[\![y]\!]
$$
solves the GCYBE, then it corresponds uniquely to a Lie algebra decomposition
$$
L_m \coloneqq \fg(\!(x)\!) \times \fg[x]/x^{m} \fg[x] = \mathfrak{D} \oplus W,
$$
where $\mathfrak{D} = \{ (f,[f]) \mid f \in \fg[\![x]\!] \}$ is the diagonal embedding of $\fg[\![x]\!]$ into $L_m$ and $W$ is a complementary subalgebra determined by $r$.

In this paper, we consider the problem of classification of generalized classical $r$-matrices.
This problem in its full generality is too wild, so we impose some natural additional restrictions.
Formally, we consider those generalized $r$-matrices for which the corresponding $W \subseteq L_m$ satisfies the following three restrictions:
\begin{enumerate}
    \item $(x^{-1}, 0) W \subseteq W$ and $(0, [x]) W \subset W$;
    \item $[(h,h),W] \subseteq W$ for any $h$ in a Cartan subalgebra $\fh \subseteq \fg$;
    \item $W_+ \subseteq x^{N} \fg[x^{-1}]$ for some positive integer $N$, where $W_+ \coloneqq (1,0) W$ is the projection of $W$ onto the left component $\fg(\!(x)\!)$ of $L_m$.
\end{enumerate}
The first two restrictions are motivated by the utility of the resulting integrable systems \cite{skrypnyk_spin_chains,golubchik_sokolov_integrable_top_like_systems}
and the last condition allows us to apply the theory of maximal orders, developed in \cite{stolin_maximal_orders,stolin_sln}.
Subalgebras $W \subseteq L_m$ with the above-mentioned properties are called regular.

The classification of regular subalgebras is further split into several classification subproblems.
We were able to completely resolve some of them and to construct a vast set of examples for the remaining ones. 
The key idea that was used in all the cases is the reduction of regular decompositions $L_m = \mathfrak{D} \oplus W$ to regular partitions
$\Delta = \Delta_1 \oplus \dots \oplus \Delta_\ell$ of the underlying irreducible root system of $\fg$.

Additionally, we introduce a technique for constructing weakly regular subalgebras $W \subseteq L_m$, i.e.\ those for which condition 2.\ above is replaced with a weaker one
\begin{enumerate}
    \item [2'.] $[h, W_\pm] \subseteq W_\pm$.
\end{enumerate}
The construction is based on a generalization of Belavin-Drinfeld triples. 
The corresponding $r$-matrices are written explicitly.

\subsection{Structure of the paper}
Let $\fh$ be a fixed Cartan subalgebra of $\fg$.
A decomposition of $\fg$ of the form
\begin{equation*}%
\label{eq:reg_decomp_intro}
    \fg = \fg_1 \oplus \dots \oplus \fg_\ell
\end{equation*}
is called regular if
\begin{enumerate}
    \item all $\fg_i$ and $\fg_i \oplus \fg_j$ are Lie subalgebras of $\fg$ and
    \item $[\fh, \fg_i] \subseteq \fg_i$ for all $1 \le i \le \ell$.
\end{enumerate}
We consider such decompositions as building blocks for regular decompositions $L_m = \mathfrak{D} \oplus W$.
Note that the invariance under the adjoint action of Cartan subalgebra implies that each $\fg_i$ has the form 
$$
\fg_i = \fs_i \bigoplus_{\alpha \in \Delta_i} \fg_\alpha,
$$
for some vector subspace $\fs_i \subseteq \fh$ and a subset $\Delta_i$ of roots of $\fg$.
Forgetting the Cartan part of $\fg$ in \cref{eq:reg_decomp_intro} we obtain a partition of the corresponding root system 
$$
    \Delta = \Delta_1 \sqcup \dots \sqcup \Delta_\ell,
$$
where $\Delta_i$ and $\Delta_i \sqcup \Delta_j$ are closed with respect to root addition.
Such a partition of a root system is again called regular.
Regular decompositions of simple Lie algebras $\fg$ and regular partition of the corresponding root systems are completely classified in \cite{dokovic_check_hee, maximov_regular}.
We present this classification in \cref{sec:regular_decompositions}.

The theory of maximal orders, that allows us to split the classification of regular decompositions $L_m = \mathfrak{D} \oplus W$ further into smaller subproblems, is presented in \cref{subsec:maximal_orders_L0}.
More precisely, the theory states that, up to a certain equivalence, every regular subalgebra \(W \subseteq L_m\) satisfies
$$
W \subseteq \mathfrak{P}_i \times \fg[x]/x^m \fg[x],
$$
where $\mathfrak{P}_i$ is the parabolic subalgebra of $\fg[x,x^{-1}] \subseteq \fg(\!(x)\!)$ corresponding to the simple root $\alpha_i$ with $0 \le i \le \textnormal{rank}(\fg)$ of the Lie algebra $\fg[x,x^{-1}]$.
Moreover, each parabolic subalgebra $\mathfrak{P}_i$ is associated with an integer $k_i$, called the type of $\mathfrak{P}_i$.
Consequently, we have a reduction of the original problem to the classification of regular subalgebras $ W \subseteq L_m$ with $m \in \{ 0,1,2 \}$ and different types $0 \le k \le 6$.

We complete \cref{sec:preliminaries} with an explanation of the relation between regular decompositions $L_m = \mathfrak{D} \oplus W$ and solutions of the GCYBE.

A full classification of regular subalgebras $W \subseteq L_0$ of type $0$ is obtained in \cref{sec:reg_decomp_type_0}.
Precisely, we prove the following result.
\begin{maintheorem}
Let $W \subseteq \fg(\!(x)\!)$ be a regular subalgebra of type $0$.
Then we can find a regular partition of the root system of $\fg$
\begin{equation*}%
\Delta = \bigsqcup_{i = 1}^n \Delta_i
\end{equation*}
and constants $a_1, \dots, a_n \in F$ such that
\begin{equation*}%
   W = W_\phi \bigoplus_{i=1}^n \bigoplus_{\alpha \in \Delta_i} (x^{-1} - a_i)\fg_{\alpha} [x^{-1}],
\end{equation*}
where $W_\phi \subseteq \fh[x^{-1}]$ is defined by a linear map $\phi \colon \fh \to \fh$ compatible with the regular partition of \(\Delta\); see  \cref{prop:W_from_regular_splittings}.
The converse direction is also true. 

The solution of the GCYBE associated to the subalgebra above is given by 
    \begin{equation*}
        r(x,y) = \frac{\Omega}{x-y} + \left(\frac{\phi}{y\phi - 1} \otimes 1\right)\Omega_\fh + \sum_{i = 1}^n \frac{a_i\Omega_i}{a_iy-1},
    \end{equation*}
    where \(\Omega \in (\fh \otimes \fh) \bigoplus_{\alpha \in \Delta} (\fg_\alpha \otimes \fg_{-\alpha})\) is the quadratic Casimir element and \(\Omega_\fh \in \fh \otimes \fh\) as well as \(\Omega_i \in \bigoplus_{\alpha \in \Delta_i}\fg_{\alpha} \otimes \fg_{-\alpha}\) are the corresponding components of \(\Omega\).
\end{maintheorem}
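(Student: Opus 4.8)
The plan is to reduce everything to a single weight component for the adjoint action of the constant Cartan subalgebra, classify these components as modules over $F[x^{-1}]$, and only afterwards reassemble $W$ and extract its $r$-matrix.

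First I would use that $W$ has type $0$: by the reduction via maximal orders in \cref{subsec:maximal_orders_L0} I may assume $W\subseteq\fg[x^{-1}]$. Condition~2 makes $W$ stable under $\ad h$ for every $h\in\fh$, so $W$ splits into weight spaces $W=W_\fh\oplus\bigoplus_{\alpha\in\Delta}W_\alpha$ with $W_\fh=W\cap\fh[x^{-1}]$ and $W_\alpha=W\cap\fg_\alpha[x^{-1}]$; by condition~1 each summand is an $F[x^{-1}]$-submodule, and the decomposition $\fg(\!(x)\!)=\fg[\![x]\!]\oplus W$ restricts to $\fg_\alpha(\!(x)\!)=\fg_\alpha[\![x]\!]\oplus W_\alpha$ and $\fh(\!(x)\!)=\fh[\![x]\!]\oplus W_\fh$. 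Fixing a root vector identifies $\fg_\alpha[x^{-1}]$ with $F[x^{-1}]$, under which $W_\alpha$ becomes an ideal complementary to the constants; as the only such ideals are $(x^{-1}-a)F[x^{-1}]$, this yields $W_\alpha=(x^{-1}-a_\alpha)\fg_\alpha[x^{-1}]$ with a unique $a_\alpha\in F$. The Cartan component is exactly the content of \cref{prop:W_from_regular_splittings}, which presents $W_\fh=W_\phi$ through a linear map $\phi\colon\fh\to\fh$.

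Next I would impose $[W,W]\subseteq W$ weight by weight. For $\alpha,\beta,\alpha+\beta\in\Delta$, closure of $[W_\alpha,W_\beta]$ inside $W_{\alpha+\beta}$ amounts to $(x^{-1}-a_{\alpha+\beta})\mid(x^{-1}-a_\alpha)(x^{-1}-a_\beta)$, i.e.\ $a_{\alpha+\beta}\in\{a_\alpha,a_\beta\}$. Grouping the roots by the common value of $a_\alpha$ and setting $\Delta_i\coloneqq\{\alpha:a_\alpha=a_i\}$, this is precisely the assertion that every $\Delta_i$ and every $\Delta_i\sqcup\Delta_j$ is closed under root addition, so that $\Delta=\bigsqcup_i\Delta_i$ is a regular partition. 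The remaining constraint comes from $[W_\alpha,W_{-\alpha}]\subseteq W_\fh$, which forces the compatibility of $\phi$ with the partition recorded in \cref{prop:W_from_regular_splittings}; the brackets $[\fh,W]$ and $[W_\fh,W_\alpha]$ are automatic. This establishes the stated form of $W$, and running the same computations backwards proves the converse: from a regular partition, constants $a_i$ and a compatible $\phi$ one recovers a regular subalgebra complementary to $\fg[\![x]\!]$.

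For the $r$-matrix I would invoke the correspondence between regular decompositions and solutions of the GCYBE from \cref{sec:preliminaries}: $r(x,y)=\frac{\Omega}{x-y}+g(x,y)$, where $g$ is the unique correction, regular in $x$, for which every $y$-coefficient of $r$ lies in $W$ in the first tensor factor. Writing $\Omega=\Omega_\fh+\sum_\alpha e_\alpha\otimes e_{-\alpha}$, this splits over weight components. On a root line the condition that $x^{-n-1}+c_n$ lie in $(x^{-1}-a_\alpha)F[x^{-1}]$ fixes $c_n=-a_\alpha^{\,n+1}$ by evaluating at $x^{-1}=a_\alpha$; summing over $n$ gives the term $\frac{a_i\Omega_i}{a_iy-1}$, which depends only on $a_\alpha$, the first-factor label. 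On the Cartan the membership of $\sum_k c_k x^{-k}$ in $W_\phi$ is governed by the cokernel criterion $\sum_k\phi^k(c_k)=0$, and solving it term by term yields the correction $-\sum_{k\ge0}y^k\phi^{k+1}\Omega_\fh=\big(\tfrac{\phi}{y\phi-1}\otimes1\big)\Omega_\fh$.

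The hardest part will be the Cartan contribution. The root lines are rank-one, so both the classification and the $r$-matrix reduce to divisibility of one-variable polynomials; the Cartan component, by contrast, is an $r$-dimensional $F[x^{-1}]$-lattice whose membership is controlled by the operator $\phi$, and one must check that the $\fh$-valued correction is well defined as an element of $\End(\fh)[\![y]\!]$ and cancels the singular part modulo $W_\phi$. Recognizing the closed form $\tfrac{\phi}{y\phi-1}$ and verifying it against the cokernel criterion is the genuine obstacle; the partition and compatibility statements then follow from the elementary divisibility relations above.
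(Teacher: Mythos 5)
Your proposal is correct, and it reaches the classification by a genuinely more elementary route than the paper. The paper encodes $W$ as $W_A=A(x^{-1}\fg[x^{-1}])$ for $A=1+Rx$, translates the subalgebra condition into the vanishing of the Nijenhuis tensor $N_R=0$ (\cref{cor:W_A_nijenhuis}), and then proves a general structure result for such $R$ via the Jordan decomposition and an induction on generalized eigenvectors (\cref{prop:solutions_R_give_filtration_alt}); the regular partition appears as the eigenspace decomposition of $R_s$, and the compatibility condition on $\phi=R|_\fh$ comes from evaluating $N_R(E_\alpha,E_{-\alpha})=-(\phi-a_i)(\phi-a_j)H_\alpha$. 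You bypass this operator formalism entirely: after the weight decomposition (which is the content of \cref{lemm:regular_subalgebra_standard_form}), each root line is a principal ideal of $F[x^{-1}]$ complementary to the constants, hence $(x^{-1}-a_\alpha)F[x^{-1}]$, and closure under brackets becomes the divisibility relation $a_{\alpha+\beta}\in\{a_\alpha,a_\beta\}$ — which is exactly $N_R(E_\alpha,E_\beta)=-(a_{\alpha+\beta}-a_\alpha)(a_{\alpha+\beta}-a_\beta)[E_\alpha,E_\beta]$ in disguise, so the computational core coincides. What the paper's machinery buys is generality: \cref{prop:solutions_R_give_filtration_alt} also covers non-$\fh$-invariant $R$ with nontrivial nilpotent part and feeds into the type $k\ge 1$ analysis, none of which your one-dimensionality argument would handle. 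For the $r$-matrix you use the complementarity/generating-series characterization and solve for the correction weight by weight (evaluation at $x^{-1}=a_\alpha$ on root lines, the kernel criterion $\sum_k\phi^k(c_k)=0$ for $W_\phi$ on the Cartan), whereas the paper invokes the closed formula $r=\frac{(A(x)\otimes\overline{A}(y))\Omega}{x-y}$ leading to \cref{eq:rmatrix_type_i}; both yield the same geometric series and the identical answer. Your worry about the Cartan term is unfounded once one notes that $y\phi-1$ is invertible in $\End(\fh)[\![y]\!]$, so the closed form is automatic.
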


In \cref{subsec:regular_decomp_0_type_1} we look at regular subalgebras $W \subseteq \fg(\!(x)\!)$ of type $1$. We give a complete description of regular subalgebras contained inside the parabolic subalgebras corresponding to the roots presented in \cref{fig:resolved_cases}. The description is given in terms of regular decompositions $\fg = \fg_1 \oplus \fg_2$ and some additional data.

\begin{figure}[H]
    \centering
    \includegraphics[scale = 0.7]{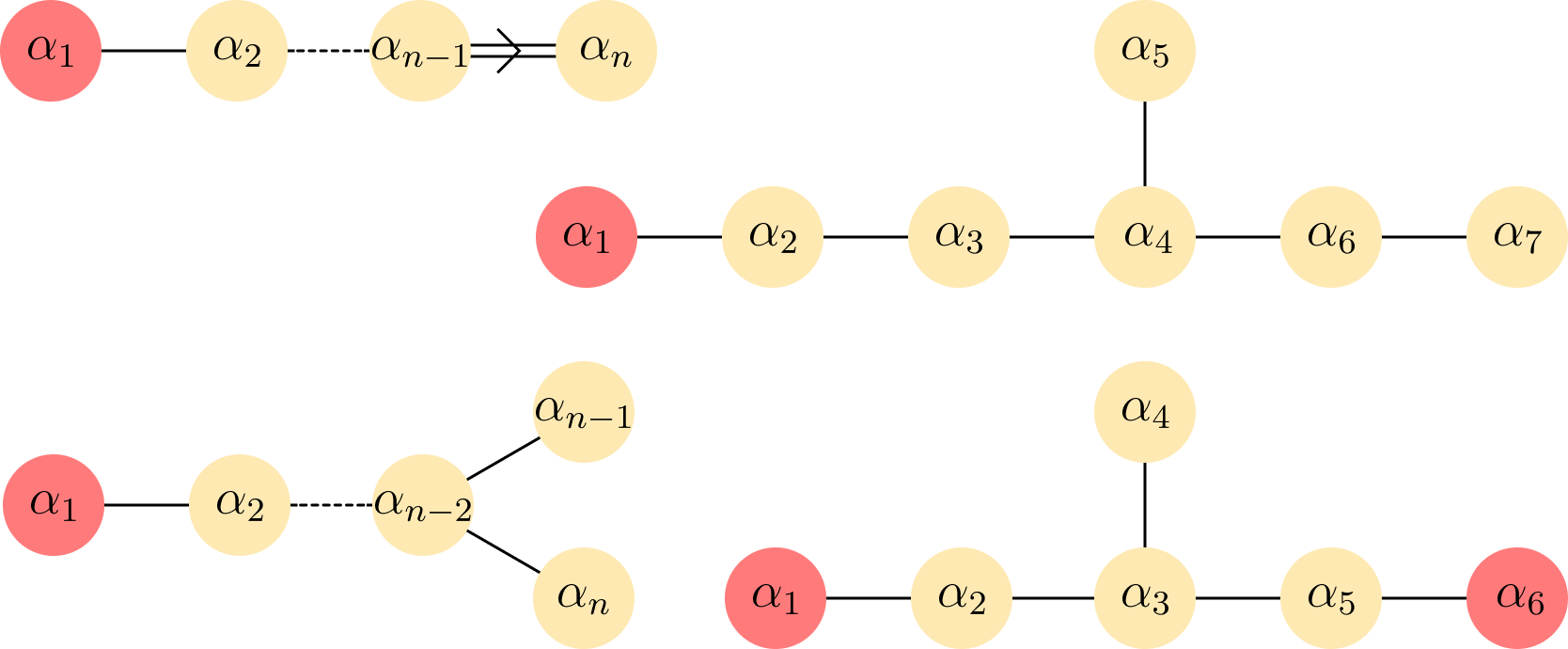}
    \caption{Regular subalgebras contained in the parabolic subalgebras corresponding to the red roots are completely classified.}
    \label{fig:resolved_cases}
\end{figure}
\noindent In order to present this result, we need to introduce the following susbets of roots:
\begin{align*}
   \Delta_\pm^{<m \alpha_i} &\coloneqq \{
   \alpha \in \Delta_\pm \mid \alpha = \pm \sum_{i=1}^n c_i \alpha_i, \ 0 \le c_i < m\}, \\
   \Delta_{\pm}^{\ge m\alpha_i} &\coloneqq \{\alpha \in \Delta_\pm \mid \alpha = \pm \sum_{i=1}^n c_i \alpha_i, \ c_i \ge m \}.
\end{align*}
Then the precise statement for the result mentioned above is the following.
\begin{maintheorem}
    Let $\Delta$ be the irreducible root system corresponding to $\fg$.
    Assume $\alpha_i$ is a simple root of $\Delta$ with $k_i = 1$.
    For any partition $\Delta^{< \alpha_i} = \Delta_1 \sqcup \Delta_2$ into two closed subsets and two constants $a_1, a_2 \in F^\times$ the formula
    
    \begin{equation*}
    \begin{aligned}
        W =W_\phi 
        &\bigoplus_{\alpha \in \Delta_1}
        (x^{-1} - a_1) \fg_\alpha[x^{-1}]
        \bigoplus_{\alpha \in \Delta_2} 
        (x^{-1} - a_1) \fg_\alpha^{a_2} [x^{-1}] \\
        &\bigoplus_{\beta \in \Delta_+^{\ge \alpha_i}} x(x^{-1} - a_1)(x^{-1} - a_2) 
        \fg_\beta [x^{-1}]  \bigoplus_{\alpha \in \Delta_-^{\ge \alpha_i}}
     x^{-1}\fg_{\alpha}[x^{-1}],
    \end{aligned}
    \end{equation*}
    defines a regular subalgebra of type $1$. 
    Here $W_\phi \subseteq \fh[x^{-1}]$ is a subalgebra defined by a certain linear map $\phi \colon \fh \to \fh$; see \cref{ex:type_1_from_2_regular}.
    Moreover, every regular $W \subseteq L_0$ of type $1$ associated to a node from \cref{fig:resolved_cases} is of this form. \newline 
    The corresponding solution of the GCYBE is given by
    \begin{equation*}
    \begin{split}
        r(x,y) = \frac{\Omega}{x-y} + \left(\frac{\phi}{y\phi-1} \otimes 1\right)\Omega_\fh +\frac{a_1 \Omega_1}{a_1 y-1} + \frac{a_2 \Omega_2}{a_2 y-1} + \frac{ a_1a_2(x+y)- a_1 - a_2}{(a_1y-1)(a_2y-1)}\Omega_{\fc},
    \end{split}
    \end{equation*}
    where \(\Omega \in (\fh \otimes \fh) \bigoplus_{\alpha \in \Delta} (\fg_\alpha \otimes \fg_{-\alpha})\) is the quadratic Casimir element and 
    $$
        \Omega_i \in (\oplus_{\alpha \in \Delta_i} \fg_\alpha) \otimes \fg, \ \ 
        \Omega_{\fc} \in (\oplus_{\alpha \in \Delta_+^{\ge \alpha_i}} \fg_\alpha) \otimes \fg
    $$
    are components of \(\Omega\) lying in the corresponding subspaces.
\end{maintheorem}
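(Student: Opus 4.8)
The plan is to prove the three assertions in turn: that the displayed $W$ is a regular subalgebra of type $1$, that it exhausts all such subalgebras attached to the nodes of \cref{fig:resolved_cases}, and that the associated GCYBE solution is the stated $r(x,y)$. The decisive structural input for everything, coming from the type-$1$ hypothesis via \cref{subsec:maximal_orders_L0}, is that the coefficient of $\alpha_i$ in any root lies in $\{-1,0,1\}$; thus $\Delta_+^{\ge\alpha_i}$ and $\Delta_-^{\ge\alpha_i}$ are exactly the roots whose $\alpha_i$-coefficient is $+1$ and $-1$. First I would record the closure relations under root addition that this forces: sums inside $\Delta^{<\alpha_i}$ stay in the Levi system; a Levi root plus a root of $\Delta_\pm^{\ge\alpha_i}$ stays in $\Delta_\pm^{\ge\alpha_i}$; two roots of $\Delta_+^{\ge\alpha_i}$ (likewise two of $\Delta_-^{\ge\alpha_i}$) never add to a root; and $\Delta_+^{\ge\alpha_i}+\Delta_-^{\ge\alpha_i}$ returns to the Levi system. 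Granting these, verifying that $W$ is closed under the bracket reduces to a short list of prefactor-divisibility checks — for instance $(x^{-1}-a_1)^2$ is divisible by $(x^{-1}-a_1)$ for the $\Delta_1$-$\Delta_1$ bracket, while $x(x^{-1}-a_1)(x^{-1}-a_2)\cdot x^{-1}=(x^{-1}-a_1)(x^{-1}-a_2)$ handles the $\Delta_+^{\ge\alpha_i}$-$\Delta_-^{\ge\alpha_i}$ bracket. Regularity conditions 1 and 2 are visible from the explicit form, condition 3 holds since the single monomial $a_1a_2x$ inside $x(x^{-1}-a_1)(x^{-1}-a_2)$ is the only positive power of $x$ that occurs, which simultaneously certifies that the type is $1$; and $L_0=\fD\oplus W$ follows by comparing $W$ with $\fg[\![x]\!]$ one root space at a time, the complement reaching $x^1$ precisely on the $\Delta_+^{\ge\alpha_i}$ spaces.

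The genuinely delicate checks in the forward direction are those involving the twisted root spaces $\fg_\alpha^{a_2}$ — the brackets landing in $\Delta_2$ or emanating from it — where the $(x^{-1}-a_2)$ factors generated by the bracket must match the twist; the definition of $\fg_\alpha^{a_2}$ in \cref{ex:type_1_from_2_regular} is engineered precisely for this. For the converse — which I expect to be the main obstacle — I would use the maximal-order reduction of \cref{subsec:maximal_orders_L0} to arrange $W\subseteq\mathfrak{P}_i$, isolate the part of $W$ supported on the Levi system $\Delta^{<\alpha_i}$, and observe that this part is a regular subalgebra of type $0$, so that the type-$0$ classification returns the partition $\Delta^{<\alpha_i}=\Delta_1\sqcup\Delta_2$, the map $\phi$, and the constants $a_1,a_2$. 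The substantive work is then to show that the components of $W$ on $\Delta_\pm^{\ge\alpha_i}$ are completely determined: closure against the already-fixed Levi part, regularity, and the type-$1$ bound on $W_+$ must be shown to pin the prefactors down to $x(x^{-1}-a_1)(x^{-1}-a_2)$ and $x^{-1}$ with no freedom left. This rigidity is exactly what restricts the statement to the nodes of \cref{fig:resolved_cases}, for which the combinatorics of $\Delta_\pm^{\ge\alpha_i}$ is tight enough to force a unique lift.

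Finally, the $r$-matrix is read off from $L_0=\fD\oplus W$ through the correspondence recalled in \cref{sec:preliminaries}: writing $r(x,y)=\Omega/(x-y)+g(x,y)$, the correction $g$ is the generating series of the projection onto $W$ along $\fg[\![x]\!]$ and hence can be computed block by block on the isotypic pieces of $\Omega$. The Cartan block and the $\Delta_1$- and $\Delta_2$-blocks reproduce the type-$0$ outputs $(\phi/(y\phi-1)\ot 1)\Omega_\fh$, $a_1\Omega_1/(a_1y-1)$, and $a_2\Omega_2/(a_2y-1)$, the $a_2$-twist on $\Delta_2$ being what moves its effective pole from $a_1$ to $a_2$. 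The only new computation is the $\Omega_\fc$-block carried by $\Delta_+^{\ge\alpha_i}$: solving the interpolation of the prefactor $x(x^{-1}-a_1)(x^{-1}-a_2)$ against $\fg[\![x]\!]$ produces the coefficient $(a_1a_2(x+y)-a_1-a_2)/((a_1y-1)(a_2y-1))$, whose genuine dependence on $x$ (and not on $y$ alone) is the source of the non-skew-symmetry that motivates the whole construction. Summing the blocks yields the asserted formula.
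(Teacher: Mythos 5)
Your forward direction and the $r$-matrix computation are sound and essentially match the paper: the paper packages the bracket-closure checks into the endomorphisms $R,S$ of \cref{lem:R_S_relations_type_i} (see \cref{ex:type_1_from_2_regular}), which is equivalent to your prefactor-divisibility bookkeeping, and the $\Omega_\fc$-block of the $r$-matrix is read off from \cref{eq:rmatrix_type_i} exactly as you describe. The problem is the converse, which you yourself flag as ``the main obstacle'' and then leave as a statement of intent rather than an argument. Two concrete inputs are missing. First, when you say the type-$0$ classification of the Levi part ``returns the partition $\Delta^{<\alpha_i}=\Delta_1\sqcup\Delta_2$'', you are assuming the partition has exactly two blocks; a priori \cref{prop:W_from_regular_splittings} only yields a regular partition into some number of closed subsets, one constant per block. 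The reason it has at most two blocks for the red nodes of \cref{fig:resolved_cases} is that for precisely those nodes the Levi subsystem is irreducible and \emph{not} of type $A$ ($B_{n-1}$, $D_{n-1}$, $D_5$, $E_6$), and by \cref{thm:root_partitions} only type $A_n$ admits $m$-regular partitions with $m\ge 3$. This — not any rigidity of the lift to $\Delta_\pm^{\ge\alpha_i}$ — is what singles out the resolved nodes; where the Levi is of type $A$ (e.g.\ $C_n$ at $\alpha_n$, or $D_n$ at $\alpha_{n-1},\alpha_n$) the paper exhibits subalgebras with $n$ parameters, so your claimed rigidity simply fails there.

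Second, even granting the two constants $a_1,a_2$ on the Levi part, you never show that the pairs $\{c_\beta,d_\beta\}$ attached to the spaces $\fg_\beta^{c_\beta,d_\beta}$, $\beta\in\Delta_+^{\ge\alpha_i}$, are forced to equal $\{a_1,a_2\}$ uniformly rather than introducing new parameters. The paper does this with a genuine combinatorial argument: \cref{lem:completion_of_roots_Bn} and \cref{lem:completion_of_roots_Cn} produce a single $\gamma\in\Delta_+^{\ge\alpha_i}$ and $\mu,\nu\in\Delta_-^{\ge\alpha_i}$ with $\gamma+\mu\in\Delta_1$ and $\gamma+\nu\in\Delta_2$ (proved by contradiction via the orbit graph, using that $\Delta_1\sqcup\Delta_2$ is a nontrivial $2$-partition), which pins $\{c_\gamma,d_\gamma\}\subseteq\{a_1,a_2\}$; one then propagates this to all of $\Delta_+^{\ge\alpha_i}$ because any two such roots are joined by a chain of Levi roots. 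Without these two steps the ``Moreover, every regular $W$ \dots is of this form'' clause is unproven, so the proposal as it stands establishes only the construction and the $r$-matrix, not the classification.
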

\noindent In \cref{subsec:regular_decomp_0_type_1} we also present explicit constructions of regular subalgebras in $\fg(\!(x)\!)$ for the remaining type $1$ cases starting with a regular decomposition of $\fg$.

Regular subalgebras in $\fg(\!(x)\!)$ of type $k \ge 2$ are considered in
\cref{subsec:reg_0_type_2}.
The structure of such subalgbras is even wilder and their classification seems unfeasible. 
However, we present a general algorithm for constructing such objects. It is demonstrated in \cref{ex:B_n_type_2}.

\Cref{sec:regular_1_2} is devoted to regular decompositions $L_m = \mathfrak{D} \oplus W$ with $m \ge 1$.
We prove that regular subalgebras of $L_1$ of type $0$ can be reduced to regular subalgebras of $\fg \times \fg$. The latter are classified in \cref{sec:reg_constant_decomp}.

\begin{maintheorem}%
\label{mainthm:gxg_decomposition}
Let $\mathfrak{w} \subseteq \fg \times \fg$ be a subalgebra such that
\begin{itemize}
    \item $[(h,h), \mathfrak{w}]  \subseteq \mathfrak{w}$ for all $h \in \fh$ and
    \item $\Delta \oplus \mathfrak{w} = \fg \times \fg$.
\end{itemize}
Then there is a regular partition $\Delta = S_+\sqcup S_-$ and subspaces $\fs_\pm = \ft_\pm \oplus \fr_\pm \subseteq \fh$, having the properties
$\fh = \fs_+ + \fs_-$ and $\ft_+ \cap \ft_- = \{ 0 \}$, such that
\begin{equation*} 
\begin{split}
        \mathfrak{w} = \left((\ft_+ \bigoplus_{\alpha \in S_+} \fg_\alpha) \times \{0\}\right) 
        \oplus 
        \left(\{ 0 \} \times (\ft_- \bigoplus_{\beta \in S_-} \fg_\beta) \right)
        \oplus 
        \textnormal{span}_F \{ (h, \phi(h)) \mid h \in \fr_+ \},
         \end{split}
\end{equation*}
    where $\phi \colon \fr_+ \to \fr_-$ is a vector space isomorphism with no non-zero fixed points. The converse direction is also true.
    In other words, all regular subalgebras are described by partitions $\Delta = S_1 \sqcup S_2$ and the extra datum $(\ft_\pm, \fr_\pm, \phi)$ describing the gluing of the Cartan parts.
\end{maintheorem}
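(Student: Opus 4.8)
The plan is to exploit the grading of $\fg \times \fg$ by the adjoint action of the diagonal Cartan $\{(h,h) : h \in \fh\}$ and reduce everything to a root-by-root analysis. First I would decompose $\fg \times \fg$ into weight spaces for $\ad(h,h)$: the weight-$\alpha$ space is $\fg_\alpha \times \fg_\alpha$ for each root $\alpha$, and the weight-$0$ space is $\fh \times \fh$. Both $\Delta$ and $\mathfrak{w}$ are invariant under this action (the former because $[(h,h),(f,f)] = ([h,f],[h,f])$, the latter by hypothesis), so the decomposition $\fg\times\fg = \Delta \oplus \mathfrak{w}$ refines weight by weight. Writing $\mathfrak{w}_\gamma = \mathfrak{w}\cap(\fg\times\fg)_\gamma$, this yields $\dim\mathfrak{w}_\alpha = 1$ for each root (a line complementary to $\mathrm{span}\{(e_\alpha,e_\alpha)\}$ in $\fg_\alpha\times\fg_\alpha$) and $\dim\mathfrak{w}_0 = \mathrm{rank}(\fg)$, with $\mathfrak{w}_0$ complementary to the diagonal Cartan in $\fh\times\fh$.

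The crux, and the step I expect to be the main obstacle, is to show that each root line is \emph{pure}, i.e.\ $\mathfrak{w}_\alpha$ equals either $\fg_\alpha\times\{0\}$ or $\{0\}\times\fg_\alpha$, never a mixed line $\mathrm{span}\{(e_\alpha, c_\alpha e_\alpha)\}$ with $c_\alpha \neq 0$. My argument is the following. Consider the difference map $\delta\colon \fh\times\fh\to\fh$, $\delta(h_1,h_2) = h_1 - h_2$; its kernel is the diagonal Cartan, so $\mathfrak{w}_0\cap\Delta = \{0\}$ together with the dimension count forces $\delta|_{\mathfrak{w}_0}$ to be a linear isomorphism, whence $\delta(\mathfrak{w}_0) = \fh$. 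On the other hand, if $\mathfrak{w}_\alpha$ were mixed, the subalgebra condition $[\mathfrak{w}_0,\mathfrak{w}_\alpha]\subseteq\mathfrak{w}_\alpha$ would force $\alpha(h_1) = \alpha(h_2)$, i.e.\ $\alpha(\delta(h_1,h_2)) = 0$, for every $(h_1,h_2)\in\mathfrak{w}_0$; combined with $\delta(\mathfrak{w}_0) = \fh$ this gives $\alpha|_\fh \equiv 0$, contradicting that $\alpha$ is a root. Hence there are no mixed roots, and I set $S_+ = \{\alpha : \mathfrak{w}_\alpha = \fg_\alpha\times\{0\}\}$, $S_- = \{\alpha : \mathfrak{w}_\alpha = \{0\}\times\fg_\alpha\}$, which partition the root system.

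Next I would verify that $S_+\sqcup S_-$ is a regular partition. This is immediate from bracket closure in the root directions: for $\alpha,\beta\in S_+$ with $\alpha+\beta$ a root, $[(e_\alpha,0),(e_\beta,0)] = (N_{\alpha,\beta}e_{\alpha+\beta},0)$ is a nonzero vector of $\mathfrak{w}_{\alpha+\beta}$ lying in $\fg_{\alpha+\beta}\times\{0\}$, forcing $\alpha+\beta\in S_+$; the same works for $S_-$, while a cross bracket with $\alpha\in S_+,\beta\in S_-$ vanishes identically. For the Cartan part I would set $\ft_+ := \{h\in\fh : (h,0)\in\mathfrak{w}_0\}$ and $\ft_- := \{h : (0,h)\in\mathfrak{w}_0\}$, so that the pure pieces of $\mathfrak{w}_0$ are $\ft_+\times\{0\}$ and $\{0\}\times\ft_-$; then I pick complements $\fr_+$ of $\ft_+$ in $\fs_+ := \pi_1(\mathfrak{w}_0)$ and $\fr_-$ of $\ft_-$ in $\fs_- := \pi_2(\mathfrak{w}_0)$, and read off the isomorphism $\phi\colon\fr_+\to\fr_-$ induced by $\mathfrak{w}_0$ viewed as a linear relation from $\fh$ to $\fh$ (which induces $\fs_+/\ft_+\cong\fs_-/\ft_-$). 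The three asserted properties then drop out: $\fh = \fs_++\fs_-$ from $\delta(\mathfrak{w}_0)=\fh$ together with $\delta(\mathfrak{w}_0)\subseteq\fs_++\fs_-$; and $\ft_+\cap\ft_-=\{0\}$ as well as the fixed-point-freeness of $\phi$ both from $\mathfrak{w}_0\cap\Delta = \{0\}$, since $h\in\ft_+\cap\ft_-$ gives $(h,h)\in\mathfrak{w}_0$ and $\phi(h)=h$ likewise gives $(h,h)\in\mathfrak{w}_0$. Reassembling the weight pieces reproduces the claimed formula.

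Finally, for the converse I would run these computations in reverse. Closure under the bracket reduces to the cases already isolated: brackets of two $S_+$ (resp.\ $S_-$) root vectors stay in the correct factor by closedness of $S_+$ (resp.\ $S_-$), mixed root brackets vanish, $[\mathfrak{w}_0,\mathfrak{w}_0]=0$, and $[\mathfrak{w}_0,\mathfrak{w}_\alpha]\subseteq\mathfrak{w}_\alpha$ holds automatically. The only genuine requirement is that $[e_\alpha,e_{-\alpha}] = h_\alpha$ land in $\ft_+$ (resp.\ $\ft_-$) whenever both $\alpha,-\alpha\in S_+$ (resp.\ $S_-$), equivalently that $\ft_+\oplus\bigoplus_{S_+}\fg_\alpha$ and $\ft_-\oplus\bigoplus_{S_-}\fg_\beta$ be genuine subalgebras of $\fg$ — a compatibility automatically satisfied by the data produced in the forward direction. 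The complementarity $\mathfrak{w}\oplus\Delta = \fg\times\fg$ is then a direct linear-algebra check, its weight-$0$ part being handled precisely by $\ft_+\cap\ft_-=\{0\}$ and the fixed-point-freeness of $\phi$.
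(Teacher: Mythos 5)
Your proof is correct, and while it lives in the same general framework as the paper's (use the $\fh$-action to reduce to a root-by-root analysis, then package the weight-zero part as a linear relation on $\fh$), the crucial step is handled by a genuinely different argument. The paper sets up the Goursat-type data $\mathfrak{w}_\pm = p_\pm(\mathfrak{w})$, $I_\pm = \mathfrak{w}\cap(\fg\times\{0\})$, $\mathfrak{w}\cap(\{0\}\times\fg)$ and the induced isomorphism $\varphi\colon \mathfrak{w}_+/I_+\to\mathfrak{w}_-/I_-$, and then kills the mixed root lines (the set $S_+\cap S_-$ in its notation) by a case analysis on whether $-\gamma$ is also mixed, bracketing the putative elements $(E_{\pm\gamma},\lambda_{\pm\gamma}E_{\pm\gamma})$ against each other to manufacture an element of $\mathfrak{w}\cap\mathfrak{d}$. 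You instead bracket a mixed line against the weight-zero piece $\mathfrak{w}_0$ and observe that this forces the root $\alpha$ to vanish on $\delta(\mathfrak{w}_0)$, where $\delta(h_1,h_2)=h_1-h_2$; since $\ker\delta$ is the diagonal Cartan and $\mathfrak{w}_0$ is a complement to it of the right dimension, $\delta|_{\mathfrak{w}_0}$ is onto $\fh$ and you get $\alpha=0$ at once. This is shorter and uniform — no case split on $\pm\gamma$, and no need for $\varphi$ outside the Cartan part — and it isolates the real mechanism: purity of the root lines is forced by complementarity at weight zero alone. What the paper's route buys in exchange is that the $(\mathfrak{w}_\pm, I_\pm,\varphi)$ formalism is reused verbatim in its later reductions (e.g.\ in the proof of \cref{thm:L2_reduction_to_L1}), where the Cartan part is not as directly controllable. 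One small point on the converse, which the paper does not actually prove: as you correctly note, for the displayed $\mathfrak{w}$ to close under the bracket one also needs $H_\alpha\in\ft_+$ (resp.\ $\ft_-$) whenever $\pm\alpha\in S_+$ (resp.\ $S_-$), and for complementarity one needs $\ft_++\ft_-+(1-\phi)\fr_+=\fh$; these are implicit compatibilities on the data that the theorem statement suppresses, so flagging them is the right call rather than a gap.
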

\noindent Similar to the $L_0$ case, in \cref{subsec:L_1_reduction_to_gxg}, we classify regular subalgebras $W \subseteq L_1$ of type $0$ in terms of regular subalgebras of $\fg \times \fg$ and some additional datum.

\begin{maintheorem}%
\label{mainthm:thm_L1}
    Let \( W \subseteq L_1 \) be a regular subalgebra of type $0$.
    Then
    \begin{equation*}
        W = \mathfrak{w} \oplus \left(\left(W_{\psi}
        \bigoplus_{i = 0}^n\bigoplus_{\alpha \in \Delta_i}(x^{-1} - a_i)\fg_\alpha[x^{-1}]\right)\times \{0\}\right),
    \end{equation*}
    where
    \begin{itemize}
        \item \(\mathfrak{w}  \subseteq \fg \times \fg\) is described by the datum \(\Delta = S_+ \sqcup S_-\) and  \((\ft_\pm,\fr_\pm,\phi)\);

        \item \(\Delta = \sqcup_{i = 0}^n \Delta_i\) is a regular decomposition with \(S_+ \subseteq \Delta_0\) and \(a_0,\dots,a_n \in F\) are distinct constants such that \(a_0 = 0\);

        \item \(\psi \colon \fh \to \fh\) is a linear map defining $W_\psi$ and compatible with the previous root space data; see \cref{prop:form_W_L_1} for details.
    \end{itemize}
    The converse holds as well.

    Furthermore, the \(r\)-matrix of \(W\) above is
    \begin{equation*}
        r(x,y) = \frac{y\Omega}{x-y} + r_{(S_\pm,\ft_\pm,\fr_\pm,\phi)} +\frac{1}{2}\Omega+\left(\frac{\psi}{y\psi - 1} \otimes 1\right)\Omega_\fh + \sum_{i = 1}^n \frac{a_i\Omega_i}{a_iy-1},
    \end{equation*}
    where \(\Omega_i \in \bigoplus_{\alpha \in \Delta_i}(\fg_\alpha \otimes \fg_{-\alpha})\) is the component of the Casimir element and \(r_{(S_\pm,\ft_\pm,\fr_\pm,\phi)}\) is a certain tensor in $\fg \ot \fg$ determined by $\mathfrak{m}$ in  \cref{eq:rmatrix_from_constant_gxg}.
\end{maintheorem}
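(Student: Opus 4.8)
The plan is to exploit the product structure $L_1 = \fg(\!(x)\!) \times \fg$ together with the $\fh$-grading coming from condition~2, so as to reduce the statement to the two building blocks already available: regular subalgebras of $\fg \times \fg$ (\cref{mainthm:gxg_decomposition}) and regular type-$0$ subalgebras of $L_0 = \fg(\!(x)\!)$ (the first main theorem). First I would decompose $W = W^0 \oplus \bigoplus_{\alpha \in \Delta} W^\alpha$ into $\fh$-weight spaces, which is legitimate since $[(h,h),W]\subseteq W$, and observe that $(x^{-1},0)$ acts on each summand, sending $(w_1,w_2)\mapsto(x^{-1}w_1,0)$; thus every $W^\alpha$ is an $F[x^{-1}]$-submodule of $\fg_\alpha(\!(x)\!) \times \fg_\alpha$, bounded above by condition~3, in which the second factor enters only through the term $w_2$. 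I would then invoke the reduction set up in \cref{subsec:L_1_reduction_to_gxg} to extract the constant subalgebra $\mathfrak{w}\subseteq\fg\times\fg$ and apply \cref{mainthm:gxg_decomposition} to present it through the datum $(\ft_\pm,\fr_\pm,\phi)$ and the partition $\Delta = S_+ \sqcup S_-$.

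Next I would analyse the decomposition $L_1^\alpha = \mathfrak{D}^\alpha \oplus W^\alpha$ one root at a time, where $\mathfrak{D}^\alpha = \{(f,f(0)) : f\in\fg_\alpha[\![x]\!]\}$. Over a single root the second factor is just one extra copy of $\fg_\alpha$, so complementarity splits into two independent requirements: first, the purely first-factor splitting $\fg_\alpha(\!(x)\!) = \fg_\alpha[\![x]\!] \oplus (x^{-1}-a)\fg_\alpha[x^{-1}]$, which is exactly the $L_0$ situation and forces a summand $(x^{-1}-a_i)\fg_\alpha[x^{-1}]\times\{0\}$ with $\alpha\in\Delta_i$; and second, the filling of the extra copy of $\fg_\alpha$, supplied either by the $\{0\}\times\fg_\alpha$ summand of $\mathfrak{w}$ when $\alpha\in S_-$, or, when $\alpha\in S_+$, by absorbing the constant $\fg_\alpha\times\{0\}$ of $\mathfrak{w}$ into the first-factor term. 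A short computation with $\fg_\alpha[x^{-1}]/(x^{-1}-a)\fg_\alpha[x^{-1}]\cong\fg_\alpha$ shows that the latter absorption is possible precisely when one may take $a_i = 0$, which is what pins down the normalisation $S_+\subseteq\Delta_0$ and $a_0 = 0$. Requiring that $W$ be closed under the bracket then translates, weight space by weight space, into the condition that $\Delta = \bigsqcup_{i=0}^n\Delta_i$ be a regular decomposition, recovering the regular-partition and constant data of the theorem.

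The main obstacle is the Cartan part $W^0\subseteq\fh(\!(x)\!)\times\fh$, where the constant gluing data $(\ft_\pm,\fr_\pm,\phi)$ of $\mathfrak{w}$ and the $x$-dependent map $\psi\colon\fh\to\fh$ defining $W_\psi$ interact, and both must be reconciled with $\mathfrak{D}^0=\{(f,f(0)):f\in\fh[\![x]\!]\}$. I would phrase the complementarity $\fh(\!(x)\!)\times\fh = \mathfrak{D}^0\oplus W^0$ as a linear-algebra problem handling the degree-$0$ and strictly-negative degrees simultaneously: the conditions $\fh=\fs_++\fs_-$, $\ft_+\cap\ft_-=\{0\}$ and the absence of nonzero fixed points of $\phi$ govern the degree-$0$ part, while the compatibility of $\psi$ with the root space data recorded in \cref{prop:form_W_L_1} governs the strictly-negative degrees. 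Showing that these are jointly necessary and sufficient, and in particular that no further cross terms between $\mathfrak{w}$ and $W_\psi$ are needed, is the technical heart, and it is exactly where \cref{prop:form_W_L_1} is used. With the forward direction in hand, the converse is a verification: one checks closure under the bracket and the direct-sum property weight space by weight space, using the splittings isolated above.

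Finally, for the $r$-matrix I would appeal to the bijection between decompositions $L_1=\mathfrak{D}\oplus W$ and solutions of the GCYBE recalled in \cref{sec:preliminaries}, and assemble $r(x,y)$ from the contributions of the individual summands of $W$. The term $\tfrac{y\Omega}{x-y}+\tfrac12\Omega$ is the $m=1$ specialisation of the canonical part attached to $\mathfrak{D}$; the summands $(\tfrac{\psi}{y\psi-1}\otimes 1)\Omega_\fh$ and $\sum_{i=1}^n\tfrac{a_i\Omega_i}{a_iy-1}$ are read off from the $L_0$-classification applied to $W_\psi$ and the root space terms, exactly as in the first main theorem; and $r_{(S_\pm,\ft_\pm,\fr_\pm,\phi)}$ is the constant tensor produced from $\mathfrak{w}$ in \cref{eq:rmatrix_from_constant_gxg}. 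Since the structural part above already establishes that $L_1=\mathfrak{D}\oplus W$ is a genuine Lie algebra decomposition, the assembled $r$ automatically solves the GCYBE, so only the bookkeeping of matching each summand to its kernel contribution remains.
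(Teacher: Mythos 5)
Your proposal follows essentially the same route as the paper: extract $\mathfrak{w}=W\cap(\fg\times\fg)$ and classify it via the $\fg\times\fg$ theorem, then use the $(x^{-1},0)$-invariance, the type-$0$ condition and $\fh$-invariance to force each root-space contribution into the form $(x^{-1}-a_\alpha)\fg_\alpha[x^{-1}]\times\{0\}$, derive the regularity of $\Delta=S_+\sqcup_i\Delta_i$ and the constraints on $\psi$ from commutators, and read off the $r$-matrix from the general generating-series formula. The only cosmetic difference is that you organize the argument by $\fh$-weight spaces up front, whereas the paper argues element by element, but the substance is identical.
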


Replacing the condition $[(h,h),W] \subseteq W$ with the weaker condition $[h, W_\pm] \subseteq W_\pm$, i.e.\ only the projections of $W$ are required to be $\fh$-invariant, we obtain the notions of a weakly regular subalgebra of $L_m$.
We show in \cref{subsec:weakly_regular_decomp} that such subalgebras can be effectively constructed using a generalized version of Belavin-Drinfeld triples combined with \cref{mainthm:thm_L1}.

In the remaining part of \cref{sec:regular_1_2} we prove that regular subalgebras $W \subseteq L_m$ with \(m > 0\) of type $k > 0$ admit a certain standard form.
Since all regular subalgebras $W \subseteq L_0$ are trivially included into the set of regular subalgebras of $L_m$, with $m \ge 1$, 
the classification of the latter objects is even wilder. 
However, we present methods for constructing such non-trivial subalgebras. 
See \cref{table:reductions} for a short summary of the results.

The paper is concluded with \cref{sec:gaudin_models}, where we relate the decompositions mentioned above to Gaudin models through the corresponding solutions to the GCYBE. We give some explicit examples of new generalized Gaudin Hamiltonians.

In \cref{sec:notations}, we placed a table with the most used notation to simplify the reading. 

\begin{table}[H]%
\centering
\begin{tabular}{|c|c|c|}
\hline
\multicolumn{3}{|c|}{$W \subseteq L_m$} \\
\hline
  $m = 0$ & type $= 0$ & \makecell{Classified by regular partitions of $\Delta$\\ and compatible linear maps $\phi \colon \fh \to \fh$. } \\
\hline
    $m = 0$ & type $> 0$ & \makecell{Cases from \cref{fig:resolved_cases} are completely classified using  decompositions \\ $\Delta = \Delta_1 \sqcup \Delta_2$ and $\phi \colon \fh \to \fh$. \\ For the remaining cases explicit constructions are presented.} \\
\hline
    $m = 1$ & type $= 0$ & \makecell{Classified by regular decompositions of $\fg \times \fg$ \\ and compatible linear maps $\phi, \psi \colon \fh \to \fh$.} \\
\hline
    $m > 0$ & type $k_i> 0$ & \makecell{Always of the form \(W = W_\fh \oplus (I_+ \times \{0\}) \oplus  (\{0\} \times I_-)\) \\ for \(I_+ \subseteq \mathfrak{P}_i\), \(I_- \subseteq \fg[x]/x^m\fg[x]\) and \(W_\fh \subseteq \fh[x] \times \fh[x]/x^m\fh[x]\). \\ Explicit constructions are presented.} \\
\hline
\end{tabular}
\caption{Overview of classification results.}
\label{table:reductions}
\end{table}

\section*{Acknowledgements}
\addcontentsline{toc}{section}{Acknowledgements}
The authors are grateful to the SUSTech International Center for Mathematics and E. Zelmanov for hosting our visit to China during the finalization of this paper and creating an environment for fruitful collaboration.\\
The authors would also like to thank T. Skrypnyk for his valuable comments.\\
The work of R.A.\ was supported by the DFG grant AB 940/1--1. It was also supported by the NCCR SwissMAP, a National
Centre of Competence in Research, funded by the Swiss
National Science Foundation (grant number 205607). \\
The work of S.M. is funded by DFG – SFB – TRR 358/1 2023 – 491392403. \\

\section{Preliminaries}%
\label{sec:preliminaries}
We fix once and for all an algebraically closed field $F$ of characteristic $0$.
Let $\fg$ be a finite-dimensional simple Lie algebra over $F$ with a Cartan subalgebra $\fh$.
We write $\Delta$ for the set of (non-zero) roots and $\pi = \{\alpha_1,\dots,\alpha_n\} \subset \Delta$ for a chosen set of simple roots. 
The choice of simple roots gives us the polarization $\Delta = \Delta_+ \sqcup \Delta_-$. 
Later we use $\alpha_0$ to denote the maximal root of $\Delta$.
Furthermore, we fix a basis 
$$\{H_{\alpha_i}, E_{\pm\alpha} \mid 1 \le i \le n, \ \alpha \in \Delta_+\}$$ of $\fg$, such that $\kappa(E_\alpha, E_{-\alpha}) = 1$ and $\kappa(H_{\alpha}, H) = \alpha(H)$, where $\kappa$ is the Killing form of $\fg$.
In particular, we have $[E_\alpha, E_{-\alpha}] = H_\alpha = \sum_{i=1}^n c_i H_{\alpha_i}$ for any $\alpha = \sum_1^n c_i \alpha_i \in \Delta_+$.

\subsection{Regular decompositions of simple Lie algebras}%
\label{sec:regular_decompositions}
An \emph{$m$-regular decomposition of $\fg$} is a decomposition of the form
\begin{equation*}%
    \fg = \bigoplus_{i = 1}^m \fg_i, \ m \ge 2,
\end{equation*}
satisfying the conditions:
\begin{enumerate}
    \item all $\fg_i$ as well as $\fg_i \oplus \fg_j$ are Lie subalgebras of $\fg$;
    \item each $\fg_i$ has the form $ \fs_i  \oplus_{\alpha \in \Delta_i} \fg_{\alpha}$ for some subspace $\fs_i \subseteq \fh$ and some subset $\Delta_i \subseteq \Delta$.
\end{enumerate}
Subalgebras of the form described in 2.\ are called \emph{regular}, motivating the name.
Equivalently, one can say that $\fg_i$ and $\fg_i \oplus \fg_j$ are subalgebras of $\fg$ invariant under the action of $\fh$.

We say that a subset $S \subseteq \Delta$ is \emph{closed} if for all $\alpha, \beta \in S$ the containment $\alpha + \beta \in \Delta$ implies $\alpha + \beta \in S$.
Regular decompositions of simple Lie algebras are closely related with \emph{regular partitions} of irreducible root systems of finite type, i.e.\ partitions
\begin{equation}%
\label{eq:reg_partition_def}
    \Delta = \bigsqcup_{i=1}^m \Delta_i, \ m \ge 2,
\end{equation}
with the property that all $\Delta_i$ and $\Delta_i \sqcup \Delta_j$ are closed.
More precisely, the results of \cite{maximov_regular} give us the following correspondence.
\begin{proposition}%
\label{prop:m_partititons}
    Given an $m$-regular partition $\Delta = S_1 \sqcup \dots \sqcup S_m$ with $m \ge 2$, we can find subspaces $\fs_1, \dots, \fs_m \subseteq \fh$ (not unique in general) such that
    $$
    \fg = \bigoplus_{i=1}^m \left( \fs_i \bigoplus_{\alpha \in S_i} \fg_\alpha \right)
    $$ 
    is a regular decomposition.
    Conversely, by forgetting the Cartan part of an $m$-regular decomposition of $\fg$ we obtain an $m$-regular partition of $\Delta$.
\end{proposition}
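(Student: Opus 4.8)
The plan is to prove the two directions separately, treating the harder passage from a regular partition to a regular decomposition as a problem of \emph{distributing the Cartan subalgebra} $\fh$ among the blocks, once the root spaces have been fixed.

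\emph{Converse direction.} Suppose $\fg = \bigoplus_{i=1}^m \fg_i$ is an $m$-regular decomposition with $\fg_i = \fs_i \oplus \bigoplus_{\alpha \in \Delta_i}\fg_\alpha$. Since each $\fg_i$ is $\fh$-invariant and the root spaces are one-dimensional with distinct weights, the $\Delta_i$ are pairwise disjoint and their union is $\Delta$. To see that $\Delta_i$ is closed, take $\alpha, \beta \in \Delta_i$ with $\alpha + \beta \in \Delta$; then $0 \neq [\fg_\alpha,\fg_\beta] = \fg_{\alpha+\beta} \subseteq \fg_i$ because $\fg_i$ is a subalgebra, forcing $\alpha+\beta \in \Delta_i$. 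Applying the identical argument to the subalgebra $\fg_i \oplus \fg_j$ shows that $\Delta_i \sqcup \Delta_j$ is closed, so forgetting the Cartan parts yields an $m$-regular partition.

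\emph{Forward direction; reduction.} Given an $m$-regular partition $\Delta = \bigsqcup_i S_i$, I would set $\fg_i := \fs_i \oplus \bigoplus_{\alpha \in S_i}\fg_\alpha$ for subspaces $\fs_i \subseteq \fh$ to be chosen, and inspect the brackets of root vectors. Because $S_i$ is closed, the only obstruction to $\fg_i$ being a subalgebra comes from brackets $[\fg_\alpha,\fg_{-\alpha}] = F H_\alpha$; hence $\fg_i$ is a subalgebra if and only if $\fs_i \supseteq \fh_i := \operatorname{span}\{H_\alpha : \alpha, -\alpha \in S_i\}$. Likewise $\fg_i \oplus \fg_j$ is a subalgebra if and only if, in addition, $H_\alpha \in \fs_i \oplus \fs_j$ for every \emph{cross pair} $\alpha \in S_i,\ -\alpha \in S_j$. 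The task is therefore to choose $\fs_i \supseteq \fh_i$ with $\fh = \bigoplus_i \fs_i$ meeting these cross-pair constraints.

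\emph{Key step and main obstacle.} The crucial structural input is an orthogonality lemma: the symmetric parts $S_i^{\mathrm{sym}} := \{\alpha : \alpha,-\alpha \in S_i\}$ of distinct blocks are $\kappa$-orthogonal, and more generally a cross-pair root in blocks $\{i,j\}$ is orthogonal to $S_k^{\mathrm{sym}}$ for all $k \neq i,j$. The argument is uniform: if two such roots were not orthogonal, then after replacing one by its negative their sum would be a root, and repeatedly adding or subtracting the available antipodes, using closedness of the relevant pairwise unions $S_\bullet \sqcup S_\bullet$, lands a root in a block it cannot belong to, a contradiction. Via the Killing-form identification $H_\alpha \leftrightarrow \alpha$ (so $\kappa(H_\alpha,H_\gamma)=\alpha(H_\gamma)$), this makes the $\fh_i$ pairwise $\kappa$-orthogonal, so $\sum_i \fh_i$ is automatically a direct sum $\fh_{\mathrm{sym}}$, and the $\fh_k$-component of any cross-pair $H_\alpha$ vanishes for $k \neq i,j$. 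Fixing a complement $\fh = \fh_{\mathrm{sym}} \oplus \fc$ and writing $\fs_i = \fh_i \oplus \ft_i$ with $\fc = \bigoplus_i \ft_i$, the remaining conditions reduce to the requirement that the $\fc$-projection of each cross-pair $H_\alpha$ lie in $\ft_i \oplus \ft_j$. I expect this last distribution of the free directions $\fc$ to be the genuine obstacle: with three or more blocks, cross pairs attached to different block-pairs need not be orthogonal, so one cannot split a basis of $\fc$ naively. This is exactly the combinatorial analysis of regular partitions carried out in \cite{maximov_regular}, which guarantees a consistent (and, as the statement notes, non-unique) choice of the $\ft_i$; I would invoke it to finish the distribution and produce the desired regular decomposition.
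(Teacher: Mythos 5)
Your proposal is correct and follows essentially the same route as the paper, which itself gives no detailed proof of this proposition but attributes it to \cite{maximov_regular} and sketches exactly your construction: span the $H_\alpha$ with $\pm\alpha$ in the same block, distribute the complementary part of $\fh$ (arbitrarily for $m=2$, and via the explicit classification of \cite{maximov_regular} for $m\ge 3$). Your reduction to the cross-pair constraints and the orthogonality lemma are correct and in fact supply more detail than the paper does, and your deferral of the $m\ge 3$ distribution to \cite{maximov_regular} mirrors the paper's own reliance on that reference.
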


Regular partitions of irreducible finite root systems into two parts were classified in \cite{dokovic_check_hee}.
Later in \cite{maximov_regular}, it was shown that $m$-regular partitions with $m \ge 3$ exist only for root systems of type $A_n$, $n \ge 2$.
Such partitions were completely described in the same paper.

We now shortly recall these classifications starting with the $2$-regular case.
Let $S$ be a subset of simple roots $\pi$. 
We denote by $\Delta^S = \Delta^S_+ \sqcup \Delta^S_-$ the root subsystem of $\Delta$ generated by $S$.
Given two subsets $S, T \subseteq \pi$ we define
$$
P(S,T) \coloneqq \Delta^S \cup (\Delta_+ \setminus \Delta^T_+).
$$
The following theorem describes all $2$-partitions.
\begin{theorem}[\cite{dokovic_check_hee}, Theorem 4]%
\label{thm:invertible_sets}
Let $S \subseteq T \subseteq \pi$ be two subsets of simple roots such that $S$ is orthogonal to $T \setminus S$. 
Then 
$$
\Delta = P(S,T) \sqcup (\Delta \setminus P(S,T))
$$
is a $2$-regular partition of $\Delta$.
Moreover, up to the action of the Weyl group $W(\Delta)$, any $2$-regular partition is of this form.
\end{theorem}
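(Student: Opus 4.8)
The plan is to prove the two assertions separately: first that each $P(S,T)$ yields a $2$-regular partition, and then that every $2$-regular partition is $W(\Delta)$-conjugate to one of this shape. Note that for a $2$-partition the only condition to verify is that both blocks are closed, since the third set $\Delta_1 \sqcup \Delta_2 = \Delta$ is automatically closed.

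For the construction direction I would introduce the additive height functional $h(\alpha) \coloneqq \sum_{\alpha_i \notin T} c_i$ for $\alpha = \sum_i c_i \alpha_i$, which is additive on roots and satisfies $\Delta^T = \{h = 0\}$, $\Delta_+ \setminus \Delta^T_+ = \{h > 0\}$ and $\{h < 0\} = -(\Delta_+ \setminus \Delta^T_+)$. Because $S \perp (T \setminus S)$, the orthogonal simple roots lie in distinct connected components of the sub-diagram on $T$, so $\Delta^T = \Delta^S \sqcup \Delta^{T \setminus S}$ with no root of $\Delta^T$ mixing the two pieces. Rewriting, $P(S,T) = \{h > 0\} \sqcup \Delta^S$ while its complement is $\{h < 0\} \sqcup \Delta^{T \setminus S}$. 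Closedness of each block is then immediate: if $\alpha,\beta \in P(S,T)$ and $\alpha + \beta \in \Delta$, then $h(\alpha+\beta) \ge 0$; when it is positive the sum is a positive root outside $\Delta^T$, and when it is zero both summands lie in $\Delta^S$, which is closed. The complement is handled symmetrically, so this step is routine once the functional is in place.

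The classification direction is the heart. Write $C \coloneqq \Delta_1$, $C' \coloneqq \Delta_2$, both closed, and decompose $C = C_s \sqcup M$, $C' = C'_s \sqcup (-M)$, where $C_s \coloneqq C \cap (-C)$, $C'_s \coloneqq C' \cap (-C')$ are the symmetric parts and $M \coloneqq \{\alpha \in C : -\alpha \in C'\}$ collects the mixed roots. Each of $C_s, C'_s$ is an intersection of closed sets, hence closed and symmetric, and a short root-string argument shows that such a set is stable under the reflections $s_\alpha$ and is therefore a root subsystem. The crucial lemma is that no root connects $C_s$ and $C'_s$: if $\alpha \in C_s$, $\beta \in C'_s$ and $\alpha + \beta \in \Delta$, then $(\alpha+\beta) + (-\alpha) = \beta$ tested against closedness of $C$, and $(\alpha+\beta)+(-\beta) = \alpha$ tested against closedness of $C'$, force $\beta \in C$ or $\alpha \in C'$, both contradictions. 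In particular $C_s \perp C'_s$, and $\Phi \coloneqq C_s \sqcup C'_s$ is again a closed symmetric subsystem. The same closedness bookkeeping shows $M$ is closed with $M \cap (-M) = \emptyset$ and that $\tilde C \coloneqq C \cup C'_s = M \sqcup \Phi$ is closed and parabolic, i.e.\ $\tilde C \cup (-\tilde C) = \Delta$, with symmetric part exactly $\Phi$.

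Finally I would invoke the standard classification of parabolic subsets: some $w \in W(\Delta)$ carries $\tilde C$ to a standard parabolic $\Delta_+ \cup \Delta^T_-$ with $T \subseteq \pi$, whose symmetric part is $\Delta^T$. Hence $w(\Phi) = \Delta^T$ and $w(M) = \Delta_+ \setminus \Delta^T_+$. Inside $\Delta^T$ the two subsystems $w(C_s)$ and $w(C'_s)$, having no connecting roots, are unions of irreducible components of $\Delta^T$, which correspond to connected components of the Dynkin sub-diagram on $T$; thus $T = S \sqcup (T \setminus S)$ with $w(C_s) = \Delta^S$, $w(C'_s) = \Delta^{T \setminus S}$ and $S \perp (T \setminus S)$. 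Therefore $w(C) = w(C_s) \sqcup w(M) = \Delta^S \cup (\Delta_+ \setminus \Delta^T_+) = P(S,T)$, as desired. I expect the main obstacle to be the connecting-roots lemma together with the recognition that $C \cup C'_s$ is parabolic, since this is precisely what lets us import the classification of parabolic subsets; the subsequent splitting of the Levi subsystem into components and the matching of the orthogonality hypothesis $S \perp (T \setminus S)$ are then essentially formal.
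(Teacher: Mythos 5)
Your proposal is correct. Note, however, that the paper itself does not prove this statement at all: it is imported verbatim from the cited reference (\cite{dokovic_check_hee}, Theorem~4), so there is no in-paper argument to compare against. What you have written is essentially a sound reconstruction of the standard proof of that classification. The construction direction via the additive functional $h(\alpha)=\sum_{\alpha_i\notin T}c_i$ is clean and complete. In the classification direction, the decomposition $C=C_s\sqcup M$ into symmetric and ``unipotent'' parts, the no-connecting-root lemma (whose case analysis on whether $\alpha+\beta$ lands in $C$ or in $C'$ is exactly right), the verification that $\tilde C=C\cup C_s'$ is a parabolic subset with symmetric part $\Phi=C_s\sqcup C_s'$, and the appeal to the Bourbaki classification of parabolic subsets all check out. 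One small point of precision: when you conclude that $w(C_s)$ and $w(C_s')$ are unions of irreducible components of $\Delta^T$, the hypothesis you actually need is their mutual orthogonality (irreducibility means precisely that a component cannot be split into two nonempty orthogonal pieces), not merely the absence of connecting roots; since you derived $C_s\perp C_s'$ from the connecting-root lemma applied to both $\beta$ and $-\beta$, this is available, but it is worth saying explicitly. Similarly, ``closed symmetric subsets are subsystems'' and the parabolic classification are genuinely standard facts, so leaning on them is legitimate.
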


Let us order the simple roots
$\pi = \{\alpha_1, \dots, \alpha_n \}$ of the system $A_n$ in such a way, that
$$
  \beta_i \coloneqq \alpha_1 + \dots + \alpha_i 
  \ \text{ and } \ \beta_i - \beta_j 
$$ 
are roots for all $1 \le i \neq j \le n$.
For convenience, we put $\beta_0 \coloneqq 0$. 
\begin{theorem}[\cite{maximov_regular}, Theorem A]
\label{thm:root_partitions}
If $\Delta = \Delta_1 \sqcup \dots \sqcup \Delta_m$, $m \ge 3$ is a regular partition, then $\Delta$ is necessarily of type $A_n$. Moreover, the following statements are true:
    \begin{enumerate}
        \item Up to swapping positive and negative roots, re-numbering elements $\Delta_i$ of the partition and action of $W(A_n)$, there is a unique maximal $(n+1)$-partition with
        $$
    \Delta_i = \{-\beta_i + \beta_j \mid 0 \le i \neq j \le n \}, \ \ 0 \le i \le n;
    $$
    \item Any other $(3 \le m < n+1)$-regular partition is obtained from the maximal one above by combining several subsets $\Delta_i$ together;
    \item Up to equivalences mentioned above, all $m$-regular partitions are described by $m$-partitions $\lambda = (\lambda_1, \dots, \lambda_m)$ of $n+1$. 
    \end{enumerate}
\end{theorem}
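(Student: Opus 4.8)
The plan is to recast a regular partition as a coloring and then exploit a strong hereditary property it satisfies. Write $c\colon \Delta \to \{1,\dots,m\}$ for the surjective map sending a root to the index of the part containing it. The first observation is that the two closedness requirements (each $\Delta_i$ closed, each $\Delta_i \sqcup \Delta_j$ closed) are together equivalent to a single \emph{inheritance condition}: whenever $\alpha,\beta,\alpha+\beta \in \Delta$ one has $c(\alpha+\beta)\in\{c(\alpha),c(\beta)\}$. Indeed, if $c(\alpha)=c(\beta)$ then closedness of $\Delta_{c(\alpha)}$ forces $c(\alpha+\beta)=c(\alpha)$, while if $c(\alpha)\ne c(\beta)$ then closedness of $\Delta_{c(\alpha)}\sqcup \Delta_{c(\beta)}$ forces $c(\alpha+\beta)\in\{c(\alpha),c(\beta)\}$; the converse is immediate. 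Thus the whole statement becomes a combinatorial assertion about inheritance colorings of $\Delta$.

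Next I would record the \emph{grouping property}: for every subset $I\subseteq\{1,\dots,m\}$ the union $\Delta_I:=\bigsqcup_{i\in I}\Delta_i$ is closed, and so is its complement, since by inheritance a sum of two roots colored inside $I$ is again colored inside $I$. In particular $(\Delta_i,\Delta\setminus\Delta_i)$ is a $2$-regular partition for each $i$, so \Cref{thm:invertible_sets} applies and each part is, up to the Weyl group, of the form $P(S_i,T_i)$. This reduces the problem to understanding how several of the sets $P(S,T)$ can fit together into one partition, and it provides the engine for an induction (merge all parts but one, apply the two-part classification, then split off a part and recurse).

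The heart of the argument, and the step I expect to be hardest, is showing that an inheritance coloring with $m\ge 3$ can occur only in type $A_n$. I would first dispose of the multiply-laced types: a direct computation shows that on a subsystem of type $B_2$ the inheritance condition admits at most two colors, for assuming $c(\varepsilon_1)\ne c(\varepsilon_2)$ propagates through the short--short--long relations and collapses every remaining value into these two; an analogous computation handles $G_2$. This local rigidity must then be globalized, using the grouping property and the explicit $P(S,T)$-description to control how color pairs overlap across overlapping rank-two subsystems, thereby ruling out $B_n,C_n,F_4,G_2$. For the simply-laced non-$A$ types it suffices, since each of $D_n$ and $E_n$ contains $D_4$, to rule out a genuine three-part coloring compatible with the ambient grouping data on a $D_4$-subsystem; the sets $P(S,T)$ supplied by \Cref{thm:invertible_sets} leave only finitely many configurations to test. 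The main obstacle is precisely this globalization, because the local bounds do not by themselves limit the total number of colors.

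Once $\Delta=A_n$ is forced, I would finish by reconstructing the coloring. Identifying the roots $\varepsilon_a-\varepsilon_b$ with the arcs $a\to b$ of the complete digraph on $\{1,\dots,n+1\}$ and root addition with composition of consecutive arcs, the inheritance condition says the color of a composite $a\to c$ equals that of $a\to b$ or of $b\to c$. A short argument then shows that every such coloring is, up to renumbering the colors and up to the sign swap $\alpha\mapsto-\alpha$ (which interchanges source and target), the \emph{source coloring} attached to a set partition $\{1,\dots,n+1\}=B_1\sqcup\dots\sqcup B_m$, namely $c(\varepsilon_a-\varepsilon_b)$ is the block of $a$. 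The finest such coloring is the partition into singletons, which unwinds to $\Delta_i=\{-\beta_i+\beta_j \mid j\ne i\}$ and establishes maximality in claim (1); every coarser coloring is obtained by merging blocks, i.e.\ by combining parts $\Delta_i$, which is claim (2); and modding out by $W(A_n)=S_{n+1}$, which permutes the coordinates, reduces a set partition to the multiset of its block sizes, that is, to an integer partition $\lambda=(\lambda_1,\dots,\lambda_m)$ of $n+1$, which is claim (3).
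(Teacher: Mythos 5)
First, a remark on the comparison itself: the paper offers no proof of this statement — it is quoted from \cite{maximov_regular} (Theorem A) as a recalled classification — so there is no in-paper argument to measure yours against, and I can only assess the sketch on its own terms. Your reformulation as an inheritance coloring ($c(\alpha+\beta)\in\{c(\alpha),c(\beta)\}$), the grouping property, and the final bookkeeping in type $A_n$ (source colorings correspond to set partitions of $\{1,\dots,n+1\}$, and modding out by $W(A_n)=S_{n+1}$ and the sign swap yields integer partitions of $n+1$) are all correct and would deliver claims (1)--(3) once the two central assertions are established. But those two assertions are precisely where the proposal stops. For the exclusion of non-$A$ types you establish only local rigidity (at most two colors on each $B_2$ or $G_2$ subsystem) and then concede that you cannot globalize it. This is a genuine gap, not a routine one: a priori different rank-two subsystems could use different pairs of colors, and $B_n$, $C_n$, $F_4$ all contain $A_2$ subsystems on which three colors are not locally forbidden, so the local bound alone proves nothing about $m$. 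Likewise the reduction of $D_n$ and $E_n$ to $D_4$ is unjustified as stated: a three-part coloring of $D_n$ need not restrict to a three-part coloring of any particular $D_4$ subsystem, and ``compatible with the ambient grouping data'' is not a precise enough hypothesis to make the finite check on $D_4$ conclusive.

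The second gap is the type $A_n$ case itself. You assert that ``a short argument'' shows every inheritance coloring of the complete digraph with at least three colors is, up to renaming colors and the sign swap, the source coloring of a set partition. That is the substantive content of the theorem in type $A$ and cannot be left as an assertion: for $m=2$ there are many regular partitions (for instance $\Delta_+\sqcup\Delta_-$, or the parabolic partitions $P(S,T)$ of \cref{thm:invertible_sets}) that are neither source nor target colorings, so any such argument must use $m\ge 3$ in an essential way, and nothing in the proposal indicates how it would. Until both of these steps are supplied, what you have is a plausible plan — reduction to a coloring problem, the two-part classification as an engine, and the correct answer in type $A$ — rather than a proof.
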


To pass from a regular partition $\Delta = \sqcup_{i = 1}^m \Delta_i$ to a regular decomposition $\fg = \oplus_{i = 1}^m \fg_i$, we let 
$$
\fg_i \coloneqq \fs_i \oplus \textnormal{span}_F \{E_{\alpha} \mid \alpha \in \Delta_i \}
$$
for some (non-unique) disjoint subspaces $\fs_i$ of $\fh$.
In case $m=2$, we first put $$\fs_i' \coloneqq \textnormal{span}_F \{H_\alpha \mid \pm\alpha \in \Delta_i \}.$$ Then we represent $\fh = (\fs_1' \oplus \fs_2') \oplus \fh'$ and, finally, obtain $\fs_i$ by distributing $\fh'$ into $\fs_i'$ in an arbitrary way.
In case $m \ge 3$ there is even less freedom.
\begin{theorem}[\cite{maximov_regular}, Theorem B]
Let $\mathfrak{sl}(n+1, F) = \oplus_{i = 1}^m \fg_i$ be an $m$-regular partition.
Up to swapping positive and negative roots, re-numbering $\fg_i$'s and the action of $W(A_n)$, it has one of the following forms

\vspace{1em}
\begin{tabular}{rl}
    1. & $\fg_1 = \textnormal{span}_F \{
            E_{\beta_i} \mid 1 \le i \le n \}$, \\
    ~ & $\fg_\ell = \textnormal{span}_F \Bigg\{E_{-\beta_i + \beta_j}, H_{\beta_i} \bigg| \sum_{t=1}^{\ell-2} \lambda_t < i \le \sum_{t=1}^{\ell-1} \lambda_t, \ 0 \le j \neq i \le n  \Bigg\}$,
\end{tabular}

where  $2 \le \ell \le k+1$ and $(\lambda_1, \dots, \lambda_k)$ is a $k$-partition of $n$;

\vspace{1em}
\begin{tabular}{rl}
    2. & $\fg_1 = \textnormal{span}_F \{ 
       E_{-\beta_i + \beta_j}, H_{\beta_i}, X \mid 0 \le i \le \lambda_1, \ 0 \le j \neq i \le n
       \}$, \\
    ~ & $\fg_\ell = \textnormal{span}_F \Bigg\{
       E_{-\beta_i + \beta_j}, H_{\beta_i} - X \bigg|
       \sum_{t = 1}^{\ell - 1} \lambda_t < i 
       \le \sum_{t = 1}^{\ell} \lambda_t, \ 0 \le j \neq i \le n
       \Bigg\}$,
\end{tabular}

where $2 \le \ell \le k$,  $(\lambda_1, \dots, \lambda_k)$ is a $k$-partition of $n$ and $X$ is an arbitrary vector in
$$
    (FH_{\beta_1} \oplus \dots \oplus FH_{\beta_{\lambda_1}}) \cup \left\{ H_{\beta_p}\, \Bigg|\, 2 \le m \le k, \ \lambda_m > 1, \ \sum_{t=1}^{m-1} \lambda_t < p \le \sum_{t=1}^{m} \lambda_t  \right\}.
$$
\end{theorem}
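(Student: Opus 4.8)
The plan is to cleanly separate the combinatorial content, which is already supplied by \cref{thm:root_partitions} and \cref{prop:m_partititons}, from the linear-algebra problem of distributing the Cartan subalgebra among the summands. By \cref{thm:root_partitions}, up to the action of $W(A_n)$, swapping $\Delta_+ \leftrightarrow \Delta_-$, and renumbering, any regular partition of $A_n$ is a coarsening of the maximal $(n+1)$-partition $\Delta_i = \{-\beta_i + \beta_j \mid 0 \le j \neq i \le n\}$; equivalently it is encoded by a set partition of $\{0,1,\dots,n\}$ into blocks $B_1,\dots,B_m$, the piece attached to a block $B$ carrying the roots $\{-\beta_i + \beta_j \mid i \in B,\ 0 \le j \neq i \le n\}$. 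First I would record that, for a decomposition $\fg = \bigoplus_i \fg_i$ with $\fg_i = \fs_i \oplus \textnormal{span}_F\{E_\alpha \mid \alpha \in \Delta_i\}$, the two subalgebra requirements become, after invoking closedness of $\Delta_i$ and $\Delta_i \sqcup \Delta_j$, purely Cartan-level conditions: $\fs_i \supseteq \fs_i' \coloneqq \textnormal{span}_F\{H_\alpha \mid \pm\alpha \in \Delta_i\}$ (so each $\fg_i$ closes under brackets of opposite roots), $H_\alpha \in \fs_i \oplus \fs_j$ whenever $\alpha \in \Delta_i$ and $-\alpha \in \Delta_j$ (so each $\fg_i \oplus \fg_j$ closes), together with the splitting $\fh = \bigoplus_i \fs_i$. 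The converse direction will follow at the end by reading these same criteria off the two explicit normal forms.

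Next I would reduce modulo the forced internal Cartan parts. For a block $B$ the roots $-\beta_i + \beta_{i'}$ with $i,i' \in B$ form an $A_{|B|-1}$-subsystem of opposite pairs, so $\fs_B \supseteq \fs_B' = \textnormal{span}_F\{H_{\beta_i} - H_{\beta_{i'}} \mid i,i' \in B\}$, of dimension $|B|-1$; summing over blocks accounts for $(n+1)-m$ dimensions of $\fh$. Using $H_{-\beta_i+\beta_j} = H_{\beta_j} - H_{\beta_i}$, any external root with $i \in B$, $j \in B'$ satisfies $H_{-\beta_i+\beta_j} = (H_{\beta_j} - H_{\beta_{j_0}}) + (H_{\beta_{j_0}} - H_{\beta_{i_0}}) + (H_{\beta_{i_0}} - H_{\beta_i})$ for fixed references $i_0 \in B$, $j_0 \in B'$, whose outer terms lie in $\fs_{B'}'$ and $\fs_B'$. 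Hence the between-block condition collapses to a single requirement per pair, and in the quotient $\bar\fh \coloneqq \fh / \bigoplus_B \fs_B'$ (of dimension $m-1$) the classes $\bar\fs_B \coloneqq \fs_B/\fs_B'$ must satisfy $\bar H_{\beta_{j_0}} - \bar H_{\beta_{i_0}} \in \bar\fs_B \oplus \bar\fs_{B'}$ with $\bigoplus_B \bar\fs_B = \bar\fh$. This is precisely the Cartan-distribution problem for the \emph{maximal} $m$-partition of $A_{m-1}$, so it suffices to solve the maximal case.

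To solve the maximal $(p+1)$-partition case I would use the projections $\pi_k \colon \fh \to \fs_k$ along $\bigoplus_{l \neq k}\fs_l$; the condition $H_{\beta_i} - H_{\beta_j} \in \fs_i \oplus \fs_j$ says $\pi_k(H_{\beta_i} - H_{\beta_j}) = 0$ for all $k \notin \{i,j\}$. Since $H_{\beta_0} = 0$ and $\{H_{\beta_i}\}_{1 \le i \le p}$ is a basis, taking $j=0$ forces $\pi_k(H_{\beta_i}) = 0$ for $i \neq k$ when $k \neq 0$, while $\pi_0(H_{\beta_i}) = c$ is a single fixed vector for all $i \ge 1$. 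Decomposing $H_{\beta_i} = \sum_k \pi_k(H_{\beta_i})$ then yields $\fs_0 = Fc$ and $\fs_i = F(H_{\beta_i} - c)$. The direct-sum condition $\bigoplus \fs_i = \fh$ now forces exactly one of these $p+1$ lines to vanish, i.e.\ either $c = 0$ or $c = H_{\beta_{i_*}}$; up to renumbering there is therefore a single distinguished block carrying zero reduced Cartan, and every other block carries a one-dimensional reduced Cartan.

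Finally I would lift this solution back to $\fh$ and read off the dichotomy. If the distinguished block is a singleton, its piece has $\fs_i = 0$ outright and we obtain the first normal form, the remaining blocks of sizes $\lambda_1,\dots,\lambda_k$ contributing $\fs_\ell = \textnormal{span}_F\{H_{\beta_i} \mid i \in B_\ell\}$; if the distinguished block has size $\ge 2$ it keeps its internal Cartan $\fs_B'$ but gains no extra line, and the one free lifting parameter — realized as a common shift of the other blocks' reference vectors — is exactly the vector $X$, giving the second normal form. The main obstacle I anticipate is this last translation: I expect the a priori several-parameter freedom in lifting the reduced lines $\bar H_{\beta_{i_\ell}}$ to representatives $H_{\beta_{i_\ell}} - X_\ell$ to collapse, under $W(A_n) \cong S_{n+1}$ and renumbering of blocks within their internal symmetry, to a single common $X$; verifying that the resulting admissible range for $X$ is exactly the stated set (the span $FH_{\beta_1} \oplus \cdots \oplus FH_{\beta_{\lambda_1}}$, corresponding to $\bar X = 0$ and hence the distinguished block being the first, together with the single $H_{\beta_p}$ inside a later block of size $\lambda_m > 1$, corresponding to $\bar X = \bar H_{\beta_p}$), and confirming that the discrete choices are all absorbed by the permitted equivalences so that no decomposition is missed or double-counted, is where the delicate bookkeeping lies.
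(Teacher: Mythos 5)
The first thing to say is that the paper contains no proof of this statement: it is imported verbatim from \cite{maximov_regular} (Theorem~B there) and used as a black box, so there is no in‑paper argument to measure your proposal against. Judged on its own terms, your reduction is sound as far as it goes. The translation of the two subalgebra requirements into the Cartan-level conditions $\fs_B \supseteq \fs_B'$, $H_\alpha \in \fs_B \oplus \fs_{B'}$ and $\fh = \bigoplus_B \fs_B$ is correct; the internal Cartans $\fs_B'$ of dimension $|B|-1$ are indeed forced; and your projection argument in the maximal case correctly yields $\fs_0 = Fc$, $\fs_i = F(H_{\beta_i}-c)$ with $c \in \{0\} \cup \{H_{\beta_{i_*}}\}$.

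The genuine gap is the step you defer to the last paragraph, and it is not mere bookkeeping: it is where the entire content of the theorem sits, and carrying it out does not land on the stated answer. Concretely, let $B_*$ be the distinguished block (the one with trivial reduced Cartan) and suppose $0 \notin B_*$. For every other block $B_\ell$ the condition $H_{\beta_{i_\ell}} - H_{\beta_{i_*}} \in \fs_{B_*} \oplus \fs_{B_\ell}$ forces $\fs_{B_\ell} = \fs_{B_\ell}' \oplus F(H_{\beta_{i_\ell}} - X_\ell)$ with $X_\ell \equiv H_{\beta_{i_*}}$ modulo $\fs_{B_*}'$, i.e.\ $X_\ell$ lies in the affine span $\{\sum_{i \in B_*} c_i H_{\beta_i} \mid \sum_i c_i = 1\}$, and the conditions for pairs $B_\ell, B_{\ell'}$ then force all $X_\ell$ to coincide — so your ``single common $X$'' prediction is right — but nothing in the list of conditions forces $X$ to be one of the vertices $H_{\beta_p}$, $p \in B_*$. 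Non-vertex choices really do occur: for $\mathfrak{sl}(4,F)$ with blocks $\{0\},\{1\},\{2,3\}$ and $X = \tfrac{1}{2}(H_{\beta_2}+H_{\beta_3})$, the three pieces with Cartan parts $FX$, $F(H_{\beta_1}-X)$, $F(H_{\beta_2}-H_{\beta_3})$ satisfy every subalgebra and direct-sum condition (e.g.\ $H_{\beta_2}, H_{\beta_3} \in FX \oplus F(H_{\beta_2}-H_{\beta_3})$), and the only Weyl-group elements preserving this block structure are generated by the transpositions $(0\,1)$ and $(2\,3)$, neither of which moves $X$ to $H_{\beta_2}$ or $H_{\beta_3}$. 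So your plan, executed faithfully, produces a strictly larger family than the quoted second normal form; to finish you must either locate an additional constraint that eliminates the non-vertex $X$ (I do not see one among the conditions you have written down) or conclude that the admissible set for $X$ has to be read as the full affine span over the distinguished block. Either way, the decisive verification is absent from the proposal and cannot be waved through as bookkeeping that will ``confirm the statement as written.''
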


\subsection{Maximal upper-bounded subalgebras in \(\fg(\!(x)\!)\)}%
\label{subsec:maximal_orders_L0}
Subalgebras $W$ of $\fg(\!(x)\!)$ are called \emph{upper-bounded}, if they satisfy the condition
\begin{equation*}%
     W \subseteq x^N \fg[x^{-1}] \ \text{ for some } \ N \in \bZ_+,
\end{equation*}
and \emph{bounded}, if they satisfy the stronger condition 
\begin{equation}%
\label{eq:boundedness_property}
     x^{-N} \fg[x^{-1}] \subseteq W \subseteq x^N \fg[x^{-1}] \ \text{ for some } \ N \in \bZ_+.
\end{equation}
Among all upper-bounded algebras, we distinguish those that are both maximal, with respect to inclusion, and sum up with $\fg[\![x]\!]$ to the whole $\fg(\!(x)\!)$, i.e.\ 
\begin{equation}%
\label{eq:whole_algebra_property}
    \fg[\![x]\!] + W = \fg(\!(x)\!).
\end{equation}
\begin{example}
    The trivial maximal bounded algebra is given by
    \begin{equation*}%
        \mathfrak{P} \coloneqq \mathfrak{P}_0 \coloneqq \fg[x^{-1}].
    \end{equation*}
    It is clearly a maximal proper subalgebra of $\fg(\!(x)\!)$ satisfying \cref{eq:boundedness_property,eq:whole_algebra_property} with $N=1$.
\end{example}

\begin{example}
    For each simple root $\alpha_i \in \{\alpha_1, \dots, \alpha_n \}$, there is a maximal bounded algebra
    \begin{equation}%
    \label{eq:maximal_order_i}
        \mathfrak{P}_i \coloneqq 
        \fh[x^{-1}]  \bigoplus_{\alpha \in \Delta} x^{\lfloor \alpha(h_i) \rfloor} \fg_\alpha[x^{-1}],
    \end{equation}
    where \(h_i \in \fh\) are defined by \(\alpha_i(h_j) = \delta_{ij}/k_i\) with \(\alpha_0 = \sum k_i\alpha_i\), $k_i \in \mathbb{Z}_+$. 
    The notation is motivated by the fact that \(\mathfrak{P}_i\) are the standard maximal parabolic subalgebras of \(\fg(\!(x)\!)\), if the latter is considered as a completed affine Lie algebra modulo its center.
\end{example}

Maximal bounded subalgebras, which sometimes are also called maximal orders, were thoroughly studied in \cite{stolin_maximal_orders}, where the following result was proven.

\begin{proposition}
    Let $W \subseteq \fg(\!(x)\!)$ be a proper bounded subalgebra
    such that 
    $$\fg[\![x]\!] + W = \fg(\!(x)\!).$$
    Then there is an automorphism
    $\varphi \in \textnormal{Aut}_{F[\![x]\!]\textnormal{-LieAlg}}(\fg[\![x]\!]) $
    such that
    $\varphi(W) \subseteq \mathfrak{P}_i$ for some $0 \le i \le n$.
\end{proposition}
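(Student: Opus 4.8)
The plan is to regard $\fg(\!(x)\!)$ as the split simple Lie algebra $\fg \otimes_F K$ over the local field $K = F(\!(x)\!)$, with $\fg[\![x]\!] = \fg \otimes_F \mathcal{O}$ its standard order over $\mathcal{O} = F[\![x]\!]$. The group $\Aut_{F[\![x]\!]\textnormal{-LieAlg}}(\fg[\![x]\!]) = \Aut(\fg)(\mathcal{O})$ is precisely the stabilizer of this order, so it acts on the set of all bounded subalgebras $W$ satisfying $\fg[\![x]\!] + W = \fg(\!(x)\!)$, and the task is to exhibit $\mathfrak{P}_0, \dots, \mathfrak{P}_n$ as a full set of orbit representatives. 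I would begin with two cheap reductions. First, two-sided boundedness forces $W \subseteq \fg[x,x^{-1}]$ with $x$-degrees bounded above, and the condition $\fg[\![x]\!] + W = \fg(\!(x)\!)$ is equivalent to $W$ surjecting onto the space of principal parts $\fg(\!(x)\!)/\fg[\![x]\!]$. Second, since each $\mathfrak{P}_i$ is maximal and containment is preserved by any $\varphi$, I may enlarge $W$ to a maximal bounded subalgebra with the same complementarity property and thereby assume $W$ is itself maximal.

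The central step is to show that, after applying a suitable $\varphi$, the subalgebra $W$ is adapted to the root space decomposition of the fixed Cartan $\fh$, i.e.\ it is normalized by the constant torus $\fh$ and hence splits as $W = W_\fh \oplus \bigoplus_{\alpha \in \Delta} W_\alpha$ with $W_\fh \subseteq \fh[x,x^{-1}]$ and $W_\alpha \subseteq \fg_\alpha[x,x^{-1}]$. The mechanism is that a maximal order determines a maximal torus $\ft \subseteq \fg(\!(x)\!)$ normalizing it, which one conjugates onto the standard Cartan $\fh \otimes K$ by an element of $\Aut(\fg)(\mathcal{O})$; invariance under the constant part $\fh \subseteq \ft$ then produces the root grading. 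I expect this normalization to be the \emph{main obstacle}. It amounts to the conjugacy of maximal split tori (equivalently, of Cartan subalgebras) over the Henselian ring $\mathcal{O}$, which one proves either by a successive-approximation argument lifting the conjugacy from the residue field $F$, or, more structurally, by placing the two orders $\fg[\![x]\!]$ and $W$ in a common apartment of the Bruhat--Tits building of $\Aut(\fg)$ over $K$ and invoking transitivity of the vertex stabilizer on the apartments through it.

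Once $W$ is in this standard form, the argument becomes purely combinatorial in the integers $n_\alpha \coloneqq \deg_x W_\alpha$. The subalgebra property yields superadditivity $n_{\alpha + \beta} \ge n_\alpha + n_\beta$ whenever $\alpha + \beta \in \Delta$, together with $n_\alpha + n_{-\alpha} \le 0$ coming from $[\fg_\alpha, \fg_{-\alpha}] \subseteq \fh$ and the bound on $W_\fh$; complementarity forces the lower bound $n_\alpha \ge -1$; and boundedness gives an upper bound. These are exactly the inequalities satisfied by $n_\alpha = \lfloor \alpha(p) \rfloor$ for a point $p$ of the apartment, and together they confine $p$, up to the finite Weyl group $W(\Delta)$ realized inside $\Aut(\fg)(\mathcal{O})$, to the closed fundamental alcove. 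Maximality of $W$ then forces $p$ to be a vertex of this alcove, and these vertices are precisely the points $h_0, \dots, h_n$ determined by $\alpha_i(h_j) = \delta_{ij}/k_i$. Reading off the resulting degree bounds gives $W \subseteq \mathfrak{P}_i$ as in \cref{eq:maximal_order_i}, completing the argument.
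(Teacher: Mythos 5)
First, a caveat on the comparison: the paper gives no proof of this proposition. It is imported from \cite{stolin_maximal_orders} with only a remark about translating conventions, so the benchmark is the argument underlying that reference together with the paper's proofs of the adjacent results (\cref{prop:maximal_upper_bounded} and \cref{cor:inner_aut_upper_bounded_subalgebra}), which go through the affine Kac--Moody algebra, \cite{kac_wang}, and the Peterson--Kac Birkhoff decomposition. Your overall strategy --- normalize $W$ by the constant Cartan, then run affine-root-system combinatorics on the top degrees $n_\alpha$ --- is the standard one, and your endgame is essentially right. In particular, the observation that complementarity forces $n_\alpha \ge -1$ for \emph{all} roots is exactly what confines the associated point to the union of closed alcoves containing the origin, so that the finite Weyl group (realized by constant automorphisms, hence inside $\Aut_{F[\![x]\!]}(\fg[\![x]\!])$) suffices to reach the fundamental alcove --- no affine translations, which would destroy $\fg[\![x]\!]$, are needed.

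The genuine gap is in your central normalization step. The objects $\fg[\![x]\!]$ and $W$ do not define two points of a single Bruhat--Tits building over $K = F(\!(x)\!)$: a bounded $W$ satisfies $x^{-N}\fg[x^{-1}] \subseteq W \subseteq x^{N}\fg[x^{-1}]$ and is therefore, after $F[x^{-1}]$-saturation and completion, an order over $F[\![x^{-1}]\!]$, i.e.\ a point of the building at the place $x = \infty$; it is not an $F[\![x]\!]$-lattice, and conversely $\fg[\![x]\!]$ is not an $F[\![x^{-1}]\!]$-lattice. So ``place both in a common apartment and use transitivity of the vertex stabilizer'' does not apply as stated in either building. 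The correct substitute is the twin-building picture, i.e.\ the Birkhoff decomposition $g = b_+ w b_-$ of the loop group \cite{peterson_kac}, combined with the transversality $\fg[\![x]\!] + W = \fg(\!(x)\!)$ to absorb the Weyl-group part --- which is precisely the mechanism the paper uses in \cref{cor:inner_aut_upper_bounded_subalgebra} to upgrade a conjugation by $G(F[x,x^{-1}])$ to one by $G(F[x])$. Your alternative mechanism (conjugacy of Cartans over the Henselian ring $F[\![x]\!]$) presupposes the existence of a Cartan subalgebra of $\fg[\![x]\!]$ normalizing $W$, which is the content of that same decomposition, so it does not break the circle. Two smaller points in the same vein: enlarging $W$ to a maximal bounded subalgebra requires a uniform upper bound on the degrees of all bounded subalgebras containing $x^{-N}\fg[x^{-1}]$ (an $\ad$-invariance/Killing-form argument you do not supply), and identifying maximal bounded subalgebras with parahorics is essentially the statement being proven, so maximality cannot be used as a free source of a normalizing torus.
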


\begin{remark}
    To be precise, paper \cite{stolin_maximal_orders}
    studies subalgebras \(W \subseteq \fg(\!(x^{-1})\!)\) with the property
    \begin{equation*}%
        x^{-N} \fg[\![x^{-1}]\!] \subseteq W \subseteq x^N \fg[\![x^{-1}]\!].
    \end{equation*}
    These are called orders. It is straight-forward to transport the results from \cite{stolin_maximal_orders} to our setting, since both our setting and the one in \cite{stolin_maximal_orders} can be reduced to the study of bounded subalgebras of \(\fg[x,x^{-1}]\).
\end{remark}

It turns out, that upper-bounded subalgebras $W$ of $\fg(\!(x)\!)$ can also be placed into the maximal bounded ones.

\begin{proposition}%
\label{prop:maximal_upper_bounded}
    Let \(W\) be a subalgebra of  \(\fg(\!(x)\!)\) such that 
    $$
    W \subseteq x^{N}\fg[x^{-1}] \textnormal{ and } \dim(x^{N}\fg[x^{-1}]/W) < \infty
    $$
    for some $N \in \mathbb{Z}_+$.
    Then there exists \(g \in G(F[x,x^{-1}])\), where \(G\) is the connected semisimple affine algebraic group associated to \(\fg\), such that
    \begin{equation*}
        \textnormal{Ad}(g)W \subseteq \mathfrak{P}_i 
    \end{equation*}
    for some $0 \le i \le n$.
\end{proposition}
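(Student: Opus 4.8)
The plan is to reduce the statement to the already-established classification of maximal orders (the cited Proposition of \cite{stolin_maximal_orders}). The only real gap is that an upper-bounded subalgebra of finite codimension need \emph{not} itself be bounded, so the cited result does not apply to $W$ directly: $W$ may fail to contain a tail $x^{-M}\fg[x^{-1}]$ because of finitely many linear ``defect conditions'' imposed at finite points $x \neq \infty$. For instance $W = \{ f \in \fg[x^{-1}] \mid f(1) \in \fb \}$, for a proper subalgebra $\fb \subseteq \fg$, is upper-bounded of finite codimension but is not bounded (here one simply notes $W \subseteq \fg[x^{-1}] = \mathfrak{P}_0$, so nothing needs to be done). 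The idea for disposing of such defects uniformly is to complete at $x = \infty$.

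First I would pass to a genuine order containing $W$. Regard $W \subseteq x^N \fg[x^{-1}] \subseteq \fg(\!(x^{-1})\!)$ and let $\overline{W}$ be its closure in the $x^{-1}$-adic topology, so that $\overline{W} \subseteq x^N \fg[\![x^{-1}]\!]$. Since the bracket is continuous, $\overline{W}$ is again a subalgebra; since $W$ is dense in $x^N \fg[x^{-1}]$ and has finite codimension $d$ there, a finite-dimensional (hence closed) dense image argument shows $\overline{W}$ has codimension at most $d$ in $x^N \fg[\![x^{-1}]\!]$. Now $x^N \fg[\![x^{-1}]\!]$ is linearly compact (profinite-dimensional) for this topology, and a closed subspace of finite codimension in a linearly compact space is open; hence $\overline{W} \supseteq x^{-M} \fg[\![x^{-1}]\!]$ for some $M$. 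Thus $x^{-M} \fg[\![x^{-1}]\!] \subseteq \overline{W} \subseteq x^N \fg[\![x^{-1}]\!]$, i.e.\ $\overline{W}$ is a bona fide bounded order in the sense of \cite{stolin_maximal_orders}, and it still contains $W$.

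Next I would enlarge $\overline{W}$ to a maximal bounded subalgebra $\widehat{W}$ (such a maximal order exists above any order, and being maximal it meets the hypotheses of the cited Proposition). Applying that classification, in the $\fg(\!(x^{-1})\!)$-formulation justified by the remark above, produces an automorphism conjugating $\widehat{W}$ into one of the standard maximal orders $\mathfrak{P}_i$ with $0 \le i \le n$. Finally I would realize this conjugation algebraically: the automorphism is an $F[\![x^{-1}]\!]$-linear Lie-algebra automorphism of $\fg[\![x^{-1}]\!]$, and since $\fg$ is simple these are of the form $\mathrm{Ad}(g)$ for $g$ in the adjoint group over $F[\![x^{-1}]\!]$ (up to a constant diagram automorphism, which only permutes the admissible indices $i$). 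Because the containment $\mathrm{Ad}(g)\widehat{W} \subseteq \mathfrak{P}_i$ amounts to finitely many congruence conditions modulo $x^{-M}\fg[\![x^{-1}]\!]$, it survives replacing $g$ by a sufficiently good polynomial approximation $g' \in G(F[x,x^{-1}])$. As $W \subseteq \widehat{W}$ and $\mathrm{Ad}(g')$ preserves $\fg[x,x^{-1}]$, we conclude $\mathrm{Ad}(g')W \subseteq \mathrm{Ad}(g')\widehat{W} \cap \fg[x,x^{-1}] \subseteq \mathfrak{P}_i$, as required.

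The hard part is the passage carried out in the second paragraph: the fact that an upper-bounded finite-codimension subalgebra is generally not bounded means the cited Proposition cannot be invoked on $W$ as given, and the insight that the obstruction lives only at finite points and is erased by completing at $x=\infty$ — turning $W$ into a true order $\overline{W} \supseteq W$ — is what makes the reduction work. A secondary technical point is the descent from an automorphism over $F[\![x^{-1}]\!]$ to an element of $G(F[x,x^{-1}])$, which I would handle by the finite-order approximation described in the last step.
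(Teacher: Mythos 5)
Your argument breaks down at the step where you feed the maximal order $\widehat{W}$ into the cited classification. That Proposition requires, besides boundedness, the hypothesis $\fg[\![x]\!]+W=\fg(\!(x)\!)$, and it is exactly this hypothesis --- absent from the statement you are proving and \emph{not} implied by maximality --- that allows the conjugating element to be taken integral (an automorphism over $F[\![x]\!]$, resp.\ $F[\![x^{-1}]\!]$ in the transported formulation). A general maximal order cannot be moved into a standard $\mathfrak{P}_i$ by an automorphism over $F[\![x^{-1}]\!]$. Concretely, for $\fg=\mathfrak{sl}_2=\textnormal{span}_F\{e,h,f\}$ and $g_k=\textnormal{diag}(x^k,x^{-k})$, the subalgebra
\[
W=\textnormal{Ad}(g_k)\left(x^{-M}\fg[x^{-1}]\right)=x^{2k-M}F[x^{-1}]e\;\oplus\;x^{-M}F[x^{-1}]h\;\oplus\;x^{-2k-M}F[x^{-1}]f
\]
satisfies all hypotheses of the Proposition (it has finite codimension in $x^{2k-M}\fg[x^{-1}]$), yet every maximal order containing its closure contains $x^{2k-M}e$ and so corresponds to a vertex of the affine building far from the standard vertex when $k\gg M$; automorphisms over $F[\![x^{-1}]\!]$ fix the standard vertex and act by isometries, so none of them can carry such an order into a $\mathfrak{P}_i$ (whose vertices lie in the fundamental alcove). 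The conjugation that works here is $\textnormal{Ad}(g_k^{-1})$ with $g_k^{-1}\in G(F[x,x^{-1}])$ genuinely non-integral, so your final step of polynomially approximating an $F[\![x^{-1}]\!]$-automorphism has nothing to approximate. What your reduction actually requires is transitivity of $G(F[x,x^{-1}])$ on maximal orders of each type (a Birkhoff-type decomposition, equivalently triviality of $G$-bundles on the punctured line), which is a materially stronger input than the quoted Proposition.

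That stronger input is precisely what the paper obtains by a different route: after replacing $W$ by $F[x^{-1}]W$, it forms the ideal $I=x^{-2N-1}W$, which has finite codimension in $\fn_-\oplus x^{-1}\fg[x^{-1}]$ and remains an ideal in $W\oplus Fc\subseteq\widehat{\fg}$, and then invokes \cite[Proposition 2.8]{kac_wang} (via the argument of \cite[Proposition 3.10]{abedin_burban_geometrization_trigonometric}), which directly delivers $g\in G(F[x,x^{-1}])$. Your first reduction --- completing at $x=\infty$ and using linear compactness to show that the closure $\overline{W}$ contains $x^{-M}\fg[\![x^{-1}]\!]$ and is therefore a bona fide order --- is correct and is a reasonable substitute for the paper's $F[x^{-1}]W$ and $x^{-2N-1}W$ manoeuvre; but the subsequent appeal to the maximal-order Proposition cannot be carried out as stated, and repairing it would amount to proving the Kac--Wang-type statement the paper cites.
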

\begin{proof}
By enlarging $W$ to $F[x^{-1}]W$, we can assume without loss of generality that $$F[x^{-1}]W \subseteq W.$$
Let \(I \coloneqq x^{-2N-1}W \subseteq W\), then $I$ is an ideal in $W$ and 
\(I \subseteq x^{-N-1}\fg[x^{-1}]\). 
Consider the algebra $W \oplus Fc$ as a subalgebra of the affine Lie algebra \(\widehat{\fg}\), i.e.\ the central extension
\begin{equation}\label{eq:hatg}
    \widehat{\fg}= \fg[x,x^{-1}] \oplus F c,    
\end{equation}
endowed with the bracket
\[
[ax^k,bx^\ell]_{\widehat{\fg}} 
= [a,b]x^{k+\ell} + \delta_{k,-\ell}k \kappa(a,b)c.
\]
Because \(W \subseteq x^{N}\fg[x^{-1}]\) and \(I \subseteq x^{-N-1}\fg[x^{-1}]\), the algebra $I$ is also an ideal in $W \oplus Fc \subseteq \widehat{\fg}$.
Furthermore, \(I\) has finite codimension in the subalgebra 
$$
\fn_- \oplus x^{-1}\fg[x^{-1}] \subseteq \widehat{\fg},
$$
since \(W \subseteq x^N\fg[x^{-1}]\) is of finite codimension by assumption. 
Therefore, copying the proof from \cite[Proposition 3.10]{abedin_burban_geometrization_trigonometric}, we see that  \cite[Proposition 2.8]{kac_wang} implies the existence of $
g \in G(F[x,x^{-1}])$
such that \(\textnormal{Ad}(g)W \subseteq \mathfrak{P}_i\) for some \(0\le i\le n\).
\end{proof}

Adding the extra condition on $W$ to sum up with $\fg[\![x]\!]$ to the whole $\fg(\!(x)\!)$, the result of \cref{prop:maximal_upper_bounded} can be refined as follows.

\begin{corollary}%
\label{cor:inner_aut_upper_bounded_subalgebra}
    Let \(W\) be a subalgebra of  \(\fg(\!(x)\!)\) with the properties
    \begin{enumerate}
        \item \(W \subseteq x^{N}\fg[x^{-1}]\) for some $N \in \mathbb{Z}_+$ and
        \item $\fg[\![x]\!] + W = \fg(\!(x)\!)$.
    \end{enumerate}
    Then there exists \(g \in G(F[x])\) such that
    \begin{equation*}
        \textnormal{Ad}_g(W) \subseteq \mathfrak{P}_i  
    \end{equation*}
    for some $0 \le i \le n$.
\end{corollary}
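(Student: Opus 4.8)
The plan is to deduce the statement from \cref{prop:maximal_upper_bounded}: first I would check that conditions (1)--(2) force the finite-codimension hypothesis of that proposition, and then use condition (2) a second time to upgrade the conjugating element from $G(F[x,x^{-1}])$ to $G(F[x])$.

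For the finite-codimension step, let $\pi\colon \fg(\!(x)\!)\to \fg(\!(x)\!)/\fg[\![x]\!]$ be the canonical projection. Condition (2) says exactly that $\pi|_W$ is surjective. On the other hand $\pi|_{x^{N}\fg[x^{-1}]}$ is also surjective, with kernel $x^{N}\fg[x^{-1}]\cap \fg[\![x]\!]=\bigoplus_{0\le j\le N}\fg x^{j}$ of dimension $(N+1)\dim\fg$. Since $W\subseteq x^{N}\fg[x^{-1}]$ and the two spaces have the same image, every element of $x^{N}\fg[x^{-1}]$ differs from an element of $W$ by something in this finite-dimensional kernel, so $x^{N}\fg[x^{-1}]=W+\bigoplus_{0\le j\le N}\fg x^{j}$ and $\dim(x^{N}\fg[x^{-1}]/W)\le (N+1)\dim\fg<\infty$. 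Now \cref{prop:maximal_upper_bounded} yields $g'\in G(F[x,x^{-1}])$ with $\textnormal{Ad}(g')W\subseteq \mathfrak{P}_i$.

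It remains to replace $g'$ by an element of $G(F[x])$, and here condition (2) re-enters. Since $\textnormal{Ad}(g')$ is a Lie algebra automorphism of $\fg(\!(x)\!)$, applying it to (2) and using $\textnormal{Ad}(g')W\subseteq\mathfrak{P}_i$ gives
\begin{equation*}
\textnormal{Ad}(g')\fg[\![x]\!]+\mathfrak{P}_i=\fg(\!(x)\!),
\end{equation*}
i.e.\ the parahoric $\textnormal{Ad}(g')\fg[\![x]\!]$ is complementary to the fixed parahoric $\mathfrak{P}_i$. Note also that for $g\in G(F[x])$ one has $g,g^{-1}\in G(F[\![x]\!])$, so $\textnormal{Ad}(g)$ fixes $\fg[\![x]\!]$; hence conjugating by $G(F[x])$ never destroys property (2), which is consistent with the target of the statement. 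I would then seek a factorization $g'=q\cdot g$ with $g\in G(F[x])$ and $q\in \textnormal{Stab}_{G(F(\!(x)\!))}(\mathfrak{P}_i)\cap G(F[x,x^{-1}])$: granting this, $\textnormal{Ad}(g)W=\textnormal{Ad}(q^{-1})\textnormal{Ad}(g')W\subseteq \textnormal{Ad}(q^{-1})\mathfrak{P}_i=\mathfrak{P}_i$, which finishes the argument (with $i$ possibly replaced by another index if $q$ merely permutes the family $\{\mathfrak{P}_0,\dots,\mathfrak{P}_n\}$).

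The main obstacle is producing this factorization, and I would obtain it from the Birkhoff decomposition of the loop group relative to the two opposite subgroups $G(F[x])$ and the parahoric at infinity stabilizing $\mathfrak{P}_i$. The displayed identity is precisely the vanishing of $H^{1}(\mathbb{P}^{1},\textnormal{ad}\,\mathcal{E})$ for the $G$-bundle $\mathcal{E}$ on $\mathbb{P}^{1}$ glued from the trivial bundle on $\mathbb{P}^{1}\setminus\{\infty\}$ and the $\mathfrak{P}_i$-structure at $\infty$ via $g'$; this cohomological constraint confines the Birkhoff stratum of $g'$ to the open big cell $\textnormal{Stab}(\mathfrak{P}_i)\cdot G(F[x])$ (up to the finitely many coweight strata with $\langle\alpha,\lambda\rangle\ge -1$, which correspond to the relabelling of the $\mathfrak{P}_j$). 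The delicate part is making this precise: one must verify that transversality really isolates these strata and that the opposite factor can be taken in $G(F[x])$ rather than only in $G(F(\!(x^{-1})\!))$. This is exactly the content provided by \cite[Proposition 2.8]{kac_wang} together with the argument of \cite[Proposition 3.10]{abedin_burban_geometrization_trigonometric}, now carried out with the positive loop group $G(F[x])$ in place of the full loop group.
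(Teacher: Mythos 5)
Your overall architecture is the same as the paper's: reduce to \cref{prop:maximal_upper_bounded} to land in some $\mathfrak{P}_i$ via $g'\in G(F[x,x^{-1}])$, then use condition (2) to trade $g'$ for an element of $G(F[x])$. Your explicit verification of the finite-codimension hypothesis of \cref{prop:maximal_upper_bounded} (comparing the two surjections onto $\fg(\!(x)\!)/\fg[\![x]\!]$ and bounding the defect by $\dim\bigl(x^N\fg[x^{-1}]\cap\fg[\![x]\!]\bigr)=(N+1)\dim\fg$) is correct and is a detail the paper leaves implicit, as is your observation that $\textnormal{Ad}(g')\fg[\![x]\!]+\mathfrak{P}_i=\fg(\!(x)\!)$.

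The gap is exactly the step you yourself flag as ``delicate'': the factorization $g'=q\cdot g$ with $g\in G(F[x])$ and $q$ stabilizing (or merely permuting) the family $\{\mathfrak{P}_0,\dots,\mathfrak{P}_n\}$. You assert that this is ``exactly the content'' of \cite[Proposition 2.8]{kac_wang} together with \cite[Proposition 3.10]{abedin_burban_geometrization_trigonometric}, but those are precisely the ingredients already consumed in the proof of \cref{prop:maximal_upper_bounded} to produce $g'\in G(F[x,x^{-1}])$; they do not supply the refined factorization relative to $G(F[x])$ and $\textnormal{Stab}(\mathfrak{P}_i)$, and the bundle-theoretic $H^1$-vanishing picture, while morally right, is not made precise. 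The paper closes this gap differently: it passes to the Kac--Moody group $\widehat{G}$, applies the Birkhoff decomposition $g'=b_+wb_-$ of \cite[Corollary 2]{peterson_kac} with $b_\pm\in B_\pm$ and $w$ in the affine Weyl group, notes that $\textnormal{Ad}(b_-^{-1})$ fixes $\mathfrak{P}_i$ so that $\textnormal{Ad}(b_+)W\subseteq\textnormal{Ad}(w^{-1})\mathfrak{P}_i$, and then invokes the argument before \cite[Lemma 9.2.2]{abedin_thesis} to show that the transversality $\fg[\![x]\!]+\textnormal{Ad}(w^{-1})\mathfrak{P}_i=\fg(\!(x)\!)$ forces $\textnormal{Ad}(w^{-1})\mathfrak{P}_i=\textnormal{Ad}(w')\mathfrak{P}_j$ for some $j$ and some $w'$ in the \emph{finite} Weyl group, whence $g=w'^{-1}b_+\in G(F[x])$ works. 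Your proposal would be complete if you replaced the misattributed citation by this Peterson--Kac argument (or an actual proof of the parahoric Birkhoff statement you need).
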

\begin{proof}
    By virtue of \cref{prop:maximal_upper_bounded}, we have \(\textnormal{Ad}_g(W) \subseteq \mathfrak{P}_i\) for some element \(g \in G(F[x,x^{-1}])\). The Kac-Moody group $\widehat{G}$ associated to the affine Lie algebra $\widehat{\fg}$ (see \cref{eq:hatg}) is the central extension of $G(F[x,x^{-1}])$ by \(F^\times\) via the rescaling of \(x\); see \cite{peterson_kac}. 
    Let $B_\pm \coloneqq H U_\pm \subseteq \widehat{G}$, where the subgroups \(U_\pm\) have the property
    $$
    \textnormal{Ad}(U_\pm) = \langle e^{x^{\pm k}\textnormal{ad}(a)}\mid a \in \fg_\alpha, \alpha \in \Delta_\pm, k \in \bZ_{\ge 0}\rangle \subseteq \textnormal{Ad}(G(F[x,x^{-1}]))
    $$
    and $H$ is a maximal torus of $\widehat{G}$. We note that \(\textnormal{Ad}(H)\) stabilizes all root spaces and the Cartan subalgebra of \(\widehat{\fg}\).
    By \cite[Corollary 2]{peterson_kac} we can decompose
    $g$ inside $\widehat{G}$ as
    $$
        g = b_+ w b_-,
    $$
    for some $b_\pm \in B_\pm$ and an element $w$ inside the Weyl group $W$ of $\widehat{\fg}$. 
    Using that \(\textnormal{Ad}(b^{-1}_-)\mathfrak{P}_i = \mathfrak{P}_i\) we obtain
    $$
        \textnormal{Ad}(b_+)W \subseteq  \textnormal{Ad}(w^{-1}) \mathfrak{P}_i.
    $$
    Applying $\textnormal{Ad}(w^{-1})$ to the equality 
    $\fg[\![x]\!] + \mathfrak{P}_i = \fg(\!(x)\!)$
    we get $\fg[\![x]\!] + \textnormal{Ad}(w^{-1})\mathfrak{P}_i = \fg(\!(x)\!)$.
    The argument right before 
    \cite[Lemma 9.2.2]{abedin_thesis} shows that this is possible if and only if 
    $$
    \textnormal{Ad}(w^{-1})\mathfrak{P}_i = \textnormal{Ad}(w')\mathfrak{P}_j
    $$
    for some (possibly different) integer $0 \le j \le n$ and some element \(w'\) of the Weyl group of \(\fg\), viewed as an element of \(G\).
    Taking $g = w^{\prime,-1}b_+$ and changing $i$ to $j$ if necessary, we get the desired statement.
\end{proof}

\subsection{Regular decompositions}
Motivated by \cite{abedin_maximov_stolin_quasibialgebras,golubchik_sokolov_integrable_top_like_systems},
we focus on Lie algebras
$$
 L_m \coloneqq \fg(\!(x)\!) \times \fg[x]/x^k\fg[x], \ m \ge 0
$$
and their Lie algebra decompositions.
More precise, we study \emph{regular decompositions}, i.e.\ decompositions of the form $L_m = \mathfrak{D} \oplus W$, where
\begin{itemize}
    \item $\Delta$ is the diagonal embedding of $\fg[\![x]\!]$ into $L_m$ with a complementary subalgebra $W\subseteq L_m$;
    \item \(W\) is invariant under the adjoint action by \(\{(h,h)\mid h \in  \fh\}\);
    \item \(W\) is invariant under the multiplication by \((x^{-1},0)\) and \((0,[x])\) and
    \item The projection $W_+$ of $W$ onto the left component $\fg(\!(x)\!)$ is upper-bounded.
\end{itemize}
We call a subalgebra $W \subseteq L_m$ itself \emph{regular} if $L_m = \mathfrak{D} \oplus W$ is a regular decomposition.

Projecting a regular subalgebra $W$ onto the left component $\fg(\!(x)\!)$ of $L_m$ gives an upper-bounded subalgebra $W_+ \subseteq \fg(\!(x)\!)$ such that $W_+ + \fg[\![x]\!] = \fg(\!(x)\!)$.
By \cref{cor:inner_aut_upper_bounded_subalgebra} we can find an automorphism 
$\varphi$ such that 
$$\varphi(W_+) \subset \mathfrak{P}_i, \ 0 \le i \le \textnormal{rank}(\fg).$$
Extending this automorphism to $L_m$ we get the inclusion 
\begin{equation}\label{eq:inclusion}
    (\varphi \times [\varphi])W \subseteq \mathfrak{P}_i \times \fg[x]/x^k\fg[x].
\end{equation}
Allowing such equivalences, we can replace the last requirement on the projection of $W$ with the requirement on $W$ to be contained in 
$\mathfrak{P}_i \times \fg[x]/x^k\fg[x]$ for some $0 \le i \le \textnormal{rank}(\fg)$.
When the latter inclusion is satisfied, we say that $W$ is regular and has \emph{type $k_i$}.
For consistency, we let $k_0 \coloneqq 0$.

\begin{remark}\label{rem:restriction_on_k}
The inclusion \cref{eq:inclusion} restricts the set of integers \(m\) we need to consider. Formally, since \(\mathfrak{P}_i \subseteq x\fg[x^{-1}]\), combining \cref{eq:inclusion}  with \(L_m = \mathfrak{D} \oplus W\) we get 
\begin{equation*}
    \{0\} \times x^{2}\fg[x] / x^m \fg[x] \subseteq (\varphi \times [\varphi])W.
\end{equation*}
Therefore, \((\varphi \times [\varphi])W\) is completely determined by its image in \(L_m/(\{0\} \times x^{2}\fg[x] /x^m \fg[x] ) = L_2\) and we can assume without loss of generality that \(0 \le m \le 2\).    
\end{remark}

\subsection{Connection to the classical Yang-Baxter equation}%
\label{subsec:connection_GCYBE}
In view of \cite{abedin_maximov_stolin_quasibialgebras},
subalgebras $W \subseteq L_m$ complementary to $\mathfrak{D}$ are in bijection with formal power series \(r\) of the form 
\begin{equation*}
     r(x,y) = \frac{y^k \Omega}{x-y} + p(x,y) \in (\fg \ot \fg)(\!(x)\!)[\![y]\!],
\end{equation*}
where $\Omega \in \fg \ot \fg$ is the quadratic Casimir element of $\fg$, which satisfy the
\emph{generalized classical Yang-Baxter equation} (GCYBE)
\begin{equation}%
\label{eq:gcybe}
    [r^{12}(x_1,x_2),r^{13}(x_1,x_3)] + [r^{12}(x_1,x_2),r^{23}(x_2,x_3)] + [r^{32}(x_3,x_2),r^{13}(x_1,x_3)]  = 0.
\end{equation}
Such a series can be viewed as a generating series of $W$. 
For a regular subalgebra \(W \subseteq L_m\), the upper-boundedness of $W$ guarantees that $p$
is a polynomial in $(\fg \ot \fg)[x,y]$
and $\fh$-invariance gives the identity
$$
[h \otimes 1 + 1 \otimes h,p(x,y)] = 0
$$ 
for all $h \in \fh$.
The fact that $W$ is an $F[x^{-1}]$-module, on the other hand, has no nice interpretation on the side of \(r\).

\section{Regular decompositions \(\fg(\!(x)\!) = \fg[\![x]\!] \oplus W\)}%
\label{sec:reg_decomp_type_0}
We start the study of regular subalgebras $W \subseteq L_m$ with the simplest case, namely $m=0$ and hence $L_0 \cong \fg(\!(x)\!)$.

\subsection{Decompositions \(\fg(\!(x)\!) = \fg[\![x]\!] \oplus W\)}
Let us first consider splittings \(\fg(\!(x)\!) = \fg[\![x]\!] \oplus W\) for subalgebras \(W\) of \(\fg(\!(x)\!)\) which satisfy \(x^{-1}W \subseteq W\) but are not-necessarily \(\fh\)-invariant. 
It is not hard to see that in this case we can write
\begin{equation*}%
    W = W_A \coloneqq A(x^{-1}\fg[x^{-1}])
\end{equation*}
for some series
\begin{equation}%
\label{eq:A_general_form}
    A = 1 + Rx + Sx^2 + \dots 
\end{equation}
which is considered as an \(F(\!(x)\!)\)-linear map \(A \colon \fg(\!(x)\!) \to \fg(\!(x)\!)\).
Clearly, the subspace $W_A$ for an arbitrary series \cref{eq:A_general_form} is not a subalgebra in general.
The next statement provides necessary and sufficient conditions on $A$ for $W_A$ to be a subalgebra.
\begin{lemma}%
\label{eq:normal_form_W_A}
    The vector space \(W_A \subseteq \fg(\!(x)\!)\) is a subalgebra if and only if for all \(a,b \in \fg\)
    \begin{equation}%
    \label{eq:condition_on_A}
        [Aa,Ab] = A([a,b] + x([Ra,b] + [a,Rb]-R[a,b]))
    \end{equation}
    holds.
\end{lemma}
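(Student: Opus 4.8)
The plan is to reduce the subalgebra condition to a statement about the brackets of a small set of generators, and then to extract the explicit identity by comparing the two lowest-order coefficients in $x$. The key structural fact I would exploit is that $A$ is invertible.

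First I would observe that $A = 1 + Rx + Sx^2 + \dots$, viewed as an element of $\End(\fg)[\![x]\!]$ with identity constant term, is invertible, with inverse $A^{-1} = 1 - Rx + \dots$ again in $\End(\fg)[\![x]\!]$; extended $F(\!(x)\!)$-linearly, $A$ and $A^{-1}$ are mutually inverse automorphisms of $\fg(\!(x)\!)$ that both preserve $\fg[\![x]\!]$. Since $A$ is $F(\!(x)\!)$-linear, hence $F[x^{-1}]$-linear, the space $W_A = A(x^{-1}\fg[x^{-1}])$ is the $F[x^{-1}]$-submodule of $\fg(\!(x)\!)$ generated by the elements $x^{-1}Aa = A(x^{-1}a)$, $a \in \fg$. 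Because the Lie bracket of $\fg(\!(x)\!)$ is $F(\!(x)\!)$-bilinear, and therefore $F[x^{-1}]$-bilinear, $W_A$ is closed under the bracket if and only if $[x^{-1}Aa, x^{-1}Ab] \in W_A$ for all $a,b \in \fg$; since this bracket equals $x^{-2}[Aa,Ab]$, the condition becomes $x^{-2}[Aa,Ab] \in W_A$ for all $a,b$.

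Next I would rewrite this membership using $A^{-1}$. The bracket $[Aa,Ab]$ lies in $\fg[\![x]\!]$, so $A^{-1}[Aa,Ab] = \sum_{j \ge 0} q_j x^j$ is a genuine power series. Then $x^{-2}[Aa,Ab] = A\big(\sum_{j\ge 0} q_j x^{j-2}\big)$, and by injectivity of $A$ this lies in $W_A = A(x^{-1}\fg[x^{-1}])$ precisely when $\sum_{j} q_j x^{j-2} \in x^{-1}\fg[x^{-1}]$, i.e.\ when $q_j = 0$ for all $j \ge 2$. Thus $W_A$ is a subalgebra if and only if, for every $a,b \in \fg$, one has $[Aa,Ab] = A(q_0 + q_1 x)$ for some $q_0, q_1 \in \fg$. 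To pin down $q_0$ and $q_1$ I would match the coefficients of $x^0$ and $x^1$: expanding $Aa = a + Ra\,x + O(x^2)$ gives $[Aa,Ab] = [a,b] + ([Ra,b]+[a,Rb])x + O(x^2)$, while $A(q_0+q_1x) = q_0 + (Rq_0 + q_1)x + O(x^2)$. Comparing constant terms forces $q_0 = [a,b]$, and comparing linear terms then forces $q_1 = [Ra,b]+[a,Rb] - R[a,b]$. Substituting these values turns the condition into exactly \cref{eq:condition_on_A}, yielding both implications at once, as the whole argument is a chain of equivalences.

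I expect the only genuinely delicate point to be the bookkeeping in the third paragraph: one must use the invertibility of $A$ together with the fact that $A$ sends $\fg[\![x]\!]$ into $\fg[\![x]\!]$ in order to convert the condition ``$x^{-2}[Aa,Ab]$ shifts into $W_A$'' into the clean statement that $A^{-1}[Aa,Ab]$ is a polynomial of degree at most one in $x$. Everything else — the reduction to generators and the final coefficient comparison — is routine once this degree count is correctly set up.
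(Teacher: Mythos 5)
Your argument is correct and follows essentially the same route as the paper's proof: reduce the subalgebra condition to the brackets of the generators $x^{-1}Aa$, observe that $x^{-2}[Aa,Ab]\in W_A$ forces $A^{-1}[Aa,Ab]$ to be a polynomial of degree at most one in $x$, and then identify its two coefficients by comparing the $x^0$- and $x^1$-terms. The paper obtains the same degree bound via the uniqueness of $c$ with $x^{-2}[Aa,Ab]=Ac$ together with $W_A\cap\fg[\![x]\!]=\{0\}$, rather than by invoking $A^{-1}$ explicitly, but this is only a cosmetic difference.
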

\begin{proof}
    Clearly, \(W_A\) is a subalgebra if \cref{eq:condition_on_A} is satisfied. On the other hand, if \(W_A\) is a subalgebra, we have that
    \begin{equation*}
        x^{-2}[Aa,Ab] = Ac
    \end{equation*}
    for some unique \(c \in x^{-2}\fg[\![x]\!]\). 
    On one side, the form of endomorphism $A$ gives the containment
    \begin{equation}%
    \label{eq:commutator_Aa_Ab}
        x^{-2}[Aa,Ab] \in x^{-2}[a,b] + x^{-1}([Ra,b] + [a,Rb]) + \fg[\![x]\!].
    \end{equation}
    On the other side, since \(W_A \cap \fg[\![x]\!] = \{0\}\), the coefficients in front of $x^{-2}$ and $x^{-1}$ determine completely the $\fg[\![x]\!]$ part of the element
    \cref{eq:commutator_Aa_Ab}.
    Therefore, \(c = c_2x^{-2} + c_1x^{-1}\) for 
    \begin{equation*}
        c_2 = [a,b] \textnormal{ and } c_1 + Rc_2 = [Ra,b] + [a,Rb]
    \end{equation*}
    and \cref{eq:condition_on_A} holds true.
\end{proof}

We can see from \cref{prop:maximal_upper_bounded} that after applying a polynomial automorphism, every upper bounded subalgebra \(W\) complementary to \(\fg[\![x]\!]\) is contained in \(\mathfrak{P}_i\subseteq x\fg[x^{-1}]\). 
Looking at coefficients in front of higher powers of $x$ in \cref{eq:condition_on_A} we get the following statement.

\begin{lemma}%
\label{lem:R_S_relations_type_i}
    The vector subspace \(W_A \subseteq \fg(\!(x)\!)\) is an upper-bounded subalgebra if and only if, after applying an \(F[x]\)-linear automorphism of \(\fg[x]\), the following conditions hold for all $a,b \in \fg$:
    \begin{enumerate}
        \item \(A = 1 + Rx+Sx^2\);
        \item \(R([Ra,b] + [a,Rb]-R[a,b]) - [Ra,Rb] = [Sa,b]+[a,Sb] - S[a,b]\);
        \item \([Sa,Sb] = 0\) and
        \item $[Sa,Rb] + [Ra,Sb] = S([Ra,b] + [a,Rb]-R[a,b])$.
    \end{enumerate}
    ~
\end{lemma}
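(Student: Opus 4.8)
The plan is to prove the two implications separately, using the subalgebra criterion \cref{eq:condition_on_A} as the engine and the reduction results \cref{prop:maximal_upper_bounded,cor:inner_aut_upper_bounded_subalgebra} to force the normal form $A = 1 + Rx + Sx^2$. For the forward direction, suppose $W_A$ is an upper-bounded subalgebra. Since $A = 1 + Rx + \dots$ is $F(\!(x)\!)$-linear and invertible with $A \equiv \mathrm{id} \pmod{x}$, the subspace $W_A$ is complementary to $\fg[\![x]\!]$, so $\fg[\![x]\!] + W_A = \fg(\!(x)\!)$, and it is $x^{-1}$-invariant by construction. I would then apply \cref{cor:inner_aut_upper_bounded_subalgebra} to obtain $g \in G(F[x])$ with $\textnormal{Ad}(g) W_A \subseteq \mathfrak{P}_i \subseteq x\fg[x^{-1}]$ for some $0 \le i \le n$. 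As $\textnormal{Ad}(g)$ is an $F(\!(x)\!)$-linear Lie algebra automorphism preserving $\fg[\![x]\!]$ and commuting with multiplication by $x^{-1}$, the image $\textnormal{Ad}(g) W_A$ is again an $x^{-1}$-invariant subalgebra complementary to $\fg[\![x]\!]$, hence of the form $W_{A'}$ with $A' = 1 + R'x + S'x^2 + \dots$ by the normal-form discussion preceding \cref{eq:normal_form_W_A}. After renaming I may assume $A$ itself has this shape.

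Next I would read off the four conditions. Writing $A(v x^{-1}) = v x^{-1} + Rv + (Sv)x + (Tv)x^2 + \dots$ for $v \in \fg$, the containment $W_A \subseteq x\fg[x^{-1}]$ (degree at most $1$ in $x$) forces all coefficient maps beyond $S$ to vanish, giving condition 1, namely $A = 1 + Rx + Sx^2$. With this truncation the subalgebra identity \cref{eq:condition_on_A} still holds, and I would expand both sides as polynomials in $x$ and match coefficients. The $x^0$ and $x^1$ coefficients are automatically equal; the coefficients of $x^2$, $x^3$ and $x^4$ reproduce conditions 2, 4 and 3, respectively. This coefficient comparison is the routine computational core and poses no real difficulty.

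The converse is then immediate: assuming that, after the automorphism, $A = 1 + Rx + Sx^2$ satisfies conditions 2--4, the same comparison shows \cref{eq:condition_on_A} holds, so $W_A$ is a subalgebra by \cref{eq:normal_form_W_A}, while condition 1 gives $W_A \subseteq x\fg[x^{-1}]$, hence upper-boundedness; since $\textnormal{Ad}(g)$ preserves both being a subalgebra and being upper-bounded, the original $W_A$ is an upper-bounded subalgebra as well. The main obstacle, and the only step needing genuine care, is the reduction: one must verify that applying $\textnormal{Ad}(g)$ keeps the subspace in the normalized shape $W_{A'}$ with leading term the identity, so that the statement of the conditions even makes sense. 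This rests precisely on the $F(\!(x)\!)$-linearity of $\textnormal{Ad}(g)$ together with its preserving $\fg[\![x]\!]$ and the $x^{-1}$-action; once that is in place, everything else is bookkeeping.
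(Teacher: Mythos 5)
Your proposal is correct and follows essentially the same route the paper takes: reduce via \cref{cor:inner_aut_upper_bounded_subalgebra} (equivalently \cref{prop:maximal_upper_bounded}) to $W_A \subseteq \mathfrak{P}_i \subseteq x\fg[x^{-1}]$, which truncates $A$ to $1+Rx+Sx^2$, and then match coefficients of $x^2,x^3,x^4$ in \cref{eq:condition_on_A} to obtain conditions 2, 4 and 3. The coefficient bookkeeping and the observation that $\textnormal{Ad}(g)$ preserves the normal form $W_{A'}$ are exactly as the paper intends.
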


When the subalgebra \(W_A\) is \(\fh\)-invariant,
we have \([\fh,A] = 0\) and therefore \(A(x^{-1}\fg_\alpha) \subseteq f_\alpha\fg_\alpha\) for some \(f_\alpha \in x^{-1} + F[\![x]\!]\). In particular, \cref{eq:normal_form_W_A} specializes to \(\fh\)-invariant decompositions as follows.

\begin{lemma}%
\label{lemm:regular_subalgebra_standard_form}
    Let \(\fg(\!(x)\!) = \fg[\![x]\!] \oplus W\) be a decomposition for a subalgebra \(W\) satisfying \(x^{-1}W \subseteq W\) and \([\fh,W]\subseteq W\).
    Then we can write
    \begin{equation*}%
        W = W_\fh \bigoplus_{\alpha \in \Delta} f_{\alpha}\, \fg_\alpha[x^{-1}],
    \end{equation*}
    where $W_\fh$ is an $F[x^{-1}]$-invariant subspace of $\fh(\!(x)\!)$ and $f_{\alpha} \in x^{-1} + F[\![x]\!]$ satisfy the relations
    \begin{equation}%
    \label{eq:commutators_regular_W_0}
        f_\alpha f_\beta = (x^{-1} + c_{\alpha \beta}) f_{\alpha + \beta}, \quad \forall \alpha, \beta, \alpha+\beta \in \Delta,
    \end{equation}
    for some constants $c_{\alpha \beta} \in F$.
\end{lemma}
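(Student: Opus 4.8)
The plan is to combine the operator description $W = W_A$ from the discussion preceding \cref{eq:normal_form_W_A} with the root-space decomposition forced by $\fh$-invariance, and then to read off the multiplicative relations among the $f_\alpha$ from the closure of $W$ under the Lie bracket. The decomposition into root components is essentially an eigenspace argument; the genuine content sits in the last step, where a valuation/pole-order comparison pins down the precise shape of the structure functions.

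First I would record that, since $x^{-1}W \subseteq W$ and $W$ is complementary to $\fg[\![x]\!]$, we may write $W = W_A = A(x^{-1}\fg[x^{-1}])$ for an $F(\!(x)\!)$-linear map $A = 1 + Rx + Sx^2 + \cdots$. The hypothesis $[\fh,W] \subseteq W$ translates into $[\fh,A] = 0$, so each of $R, S, \dots$ commutes with $\ad(h)$ for every $h \in \fh$ and hence preserves the weight spaces of the $\ad(\fh)$-action. Because $A$ is determined by its restriction to the constant subspace $\fg$ and this restriction preserves $\fh$ and each one-dimensional root space $\fg_\alpha = FE_\alpha$, we get $A(E_\alpha) = g_\alpha E_\alpha$ for some $g_\alpha \in 1 + xF[\![x]\!]$. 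Applying $F(\!(x)\!)$-linearity yields $W_\alpha \coloneqq W \cap \fg_\alpha(\!(x)\!) = A(x^{-1}\fg_\alpha[x^{-1}]) = f_\alpha\,\fg_\alpha[x^{-1}]$ with $f_\alpha \coloneqq x^{-1}g_\alpha \in x^{-1} + F[\![x]\!]$, while the weight-zero part $W_\fh \coloneqq A(x^{-1}\fh[x^{-1}])$ lies in $\fh(\!(x)\!)$ and is $F[x^{-1}]$-invariant since $A$ commutes with multiplication by $x^{-1}$. Splitting $W$ into its $\ad(\fh)$-weight components then gives the asserted decomposition $W = W_\fh \oplus \bigoplus_{\alpha \in \Delta} f_\alpha\,\fg_\alpha[x^{-1}]$.

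Next I would extract the relation from the subalgebra property. Fix $\alpha, \beta \in \Delta$ with $\alpha + \beta \in \Delta$, and write $[E_\alpha, E_\beta] = N_{\alpha,\beta} E_{\alpha+\beta}$ with $N_{\alpha,\beta} \neq 0$. Since $f_\alpha E_\alpha, f_\beta E_\beta \in W$ and $W$ is a subalgebra, the bracket $[f_\alpha E_\alpha, f_\beta E_\beta] = N_{\alpha,\beta}\,f_\alpha f_\beta\,E_{\alpha+\beta}$ lies in $W$; being of weight $\alpha+\beta$, it lies in $W_{\alpha+\beta} = f_{\alpha+\beta}\,\fg_{\alpha+\beta}[x^{-1}]$. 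As $N_{\alpha,\beta} \neq 0$, this forces $f_\alpha f_\beta \in f_{\alpha+\beta}F[x^{-1}]$, that is, $f_\alpha f_\beta = f_{\alpha+\beta}\,p$ for some $p \in F[x^{-1}]$.

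The remaining step, and the only place where care is needed, is the valuation bookkeeping that pins down $p$, which I expect to be the main (if mild) obstacle. Each $f_\gamma = x^{-1} + (\text{non-negative powers of } x)$ has $x$-adic valuation $-1$ with leading coefficient $1$, so $f_\alpha f_\beta$ has valuation $-2$ with leading coefficient $1$. If $p = \sum_{k=0}^N p_k x^{-k}$ with $p_N \neq 0$, then the most negative power of $x$ occurring in $f_{\alpha+\beta}p$ comes from $x^{-1}\cdot p_N x^{-N} = p_N x^{-N-1}$, which forces $N = 1$; matching the coefficient of $x^{-2}$ then gives $p_1 = 1$. Hence $p = x^{-1} + c_{\alpha,\beta}$ with $c_{\alpha,\beta} \coloneqq p_0 \in F$, which is exactly \cref{eq:commutators_regular_W_0}. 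The subtlety here is purely that one must simultaneously rule out higher-order tails in $p$ and verify the normalization of both the pole term $x^{-1}$ and its coefficient; everything else is the routine eigenspace decomposition above.
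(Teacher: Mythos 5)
Your proposal is correct and follows the same route the paper takes: the paper itself offers no formal proof, only the remark preceding the lemma that \([\fh,A]=0\) forces \(A(x^{-1}\fg_\alpha)\subseteq f_\alpha\fg_\alpha\) and that the relations are a specialization of the condition \cref{eq:condition_on_A} on \(A\). Your write-up simply fills in the details the paper leaves implicit — in particular the valuation argument pinning down \(f_\alpha f_\beta = (x^{-1}+c_{\alpha\beta})f_{\alpha+\beta}\) — and does so correctly.
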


\begin{remark}
    Any Lie algebra automorphism \(A\) of \(\fg(\!(x)\!)\) given by \cref{eq:A_general_form} satisfy \cref{eq:condition_on_A}, since for such automorphisms \([Ra,b] + [a,Rb] - R[a,b] = 0\). 
    In this case \(W_A \cong x^{-1}\fg[x^{-1}]\).

    In the setting of \cref{lemm:regular_subalgebra_standard_form} this observation can be interpreted as follows.
    There is a trivial choice of series $f_\alpha$ satisfying the relations \cref{eq:commutators_regular_W_0}.
    More precisely, we take arbitrary series $f_{\alpha_i} \in x^{-1} + F[\![x]\!]$
    for simple roots $\alpha_i$
    and for any root of the form
    $\alpha = \sum_{i = 1}^{n} k_i \alpha_i$ put
    \begin{equation}
        f_{\alpha} \coloneqq 
        x^{k-1}
        \prod_{i = 1}^n 
        f_{\alpha_i}^{k_i} \ \text{ and } \ 
        f_{-\alpha} \coloneqq 
        x^{-k-1}
        \prod_{i = 1}^n 
        f_{\alpha_i}^{-k_i},
    \end{equation}
    where $k \coloneqq \sum_{i=1}^n k_i$.
    In this case, the relations
    \cref{eq:commutators_regular_W_0}
    hold true with $c_{\alpha,\beta} = 0$.
    However, this choice is not interesting, because the assignment \(E_{\alpha_i} \mapsto xf_{\alpha_i}E_{\alpha_i}\) already defines an automorphism \(A\) of \(\fg(\!(x)\!)\) of the form \cref{eq:A_general_form} that maps \(W_0\) to \(W\) defined by the \(f_\alpha\) as above.
\end{remark}

\begin{example}
    Following \cite{skrypnyk_new_integrable_gaudin_type_systems,skrypnyk_spin_chains}, let $\fg$ be matrix Lie algebra and \(D\) be a matrix such that \(Y \mapsto DY + YD\) defines an endomorphism of \(\fg\). Then one can define a series $A$ by
    $$
        A(Y) \coloneqq \sqrt{1 + Dx} \, \, Y \, \sqrt{1+Dx}.
    $$
    This endomorphism of $\fg(\!(x)\!)$ 
    satisfies the condition of \cref{eq:normal_form_W_A} with
    $$
    R(Y) = \frac{1}{2}(DY + YD).
    $$
    Therefore, \(W_A\) is an $F[x^{-1}]$-invariant subalgebra of \(\fg(\!(x)\!)\) complementary to \(\fg[\![x]\!]\).
    When \([\textnormal{ad}(h),R] = 0\) 
    for all \(h \in \fh\), 
    the corresponding \(W_A\) is additionally \(\fh\)-invariant. 
    For instance, take \(\fg = \mathfrak{sp}(2n)\) whose Cartan subalgebra is given by diagonal matrices. 
    Then one can take \(D = \textnormal{diag}(a_1, \dots, a_n, a_1,\dots, a_n)\). 
    A similar construction is possible for other classical Lie algebras.

    Note that the series $A$, in general, has more than $3$ non-zero terms and hence do not give rise to an upper-bounded subalgebra of $\fg(\!(x)\!)$.
    It is unclear however if these subalgebras are gauge equivalent to upper-bounded ones.
\end{example}

Combining previous results, we can derive a standard form for regular subalgebras \(W \subseteq L_0\) of any type \(k \ge 0\).
To simplify the presentation of this normal form, we introduce the following subsets of roots
\begin{align*}
   \Delta_\pm^{<m \alpha_i} &\coloneqq \{
   \alpha \in \Delta_\pm \mid \alpha = \pm \sum_{i=1}^n c_i \alpha_i, \ 0 \le c_i < m\}, \\
   \Delta_{\pm}^{\ge m\alpha_i} &\coloneqq \{\alpha \in \Delta_\pm \mid \alpha = \pm \sum_{i=1}^n c_i \alpha_i, \ c_i \ge m \}
\end{align*}
and let
\begin{align}%
\label{eq:l_c_r}
    \fl \coloneqq \bigoplus_{\alpha \in \Delta_+^{< k_i \alpha_i}} \fg_\alpha
    \bigoplus_{\alpha \in \Delta_-^{< \alpha_i}}
    \fg_{\alpha}, \ \
    \fc \coloneqq \bigoplus_{\alpha \in \Delta_+^{\ge k_i \alpha_i}}
    \fg_\alpha \ \text{ and } \ \
    \fr \coloneqq \bigoplus_{\alpha \in \Delta_-^{\ge \alpha_i}}
    \fg_{\alpha}.
\end{align}
Furthermore, we define the following subspaces of $\fg(\!(x)\!)$
\begin{equation*}%
\begin{split}
    V^a &\coloneqq (x^{-1}-a)V[x^{-1}],  \\
    V^{a,b} &\coloneqq x(x^{-1}-a)(x^{-1}-b)V[x^{-1}]
\end{split}    
\end{equation*}
for any subspace \(V \subseteq \fg\) and \(a,b \in F\).

If $\alpha_0 = \sum_{i=1}^n k_i \alpha_i$ is the decomposition of the maximal root into a sum of simple roots, then the parabolic $\mathfrak{P}_i$ (see \cref{eq:maximal_order_i}) can be written in the form
\begin{equation}%
\label{eq:max_order_i_roots}
    \mathfrak{P}_i = (\fh \oplus \fl \oplus x \fc \oplus x^{-1} \fr) [x^{-1}]
\end{equation}
and by \cref{lemm:regular_subalgebra_standard_form} any regular subalgebra $W \subseteq \mathfrak{P}_i$ is necessarily of the form
\begin{equation}%
\label{eq:reg_subalgebra_type_i}
\begin{aligned}
    W = W_\fh  
    \bigoplus_{\alpha \in \Delta_+^{< k_i \alpha_i}} \fg_\alpha^{a_\alpha}
    \bigoplus_{\alpha \in \Delta_-^{< \alpha_i}}
    \fg_\alpha^{a_\alpha}
    \bigoplus_{\alpha \in \Delta_+^{\ge k_i \alpha_i}}
    \fg_\alpha^{c_\alpha, d_\alpha}
    \bigoplus_{\alpha \in \Delta_-^{\ge \alpha_i}}
    \fg_{\alpha}^0.  
\end{aligned}
\end{equation}
In other words, defining a regular subalgebra $W \subseteq \mathfrak{P}_i$ is equivalent to defining constants $a_\alpha, c_\alpha, d_\alpha$ in \cref{eq:reg_subalgebra_type_i} in a consistent way and finding a compatible \(W_\fh \subseteq \fh[x^{-1}]\).

If we write 
$$
    W_\fh = W_\phi \coloneqq \{x^{-n}(x^{-1}h - \phi(h)) \mid h \in \fh,n\in\bZ_{\ge 0}\}
$$
for some linear map \(\phi \colon \fh \to \fh\), the associated \(A = 1 + Rx + Sx^2\) is given by 
\begin{equation*}
        Rv = \begin{cases}
            -\phi(v) & v\in\fh,\\
            -a_\alpha v &
            v\in\fg_\alpha,\alpha \in \Delta_+^{<k_i\alpha_i} \cup \Delta_-^{<\alpha_i}, \\
            -(c_\alpha + d_\alpha)v& v \in \fg_\alpha, \alpha \in \Delta_+^{\ge k_i\alpha_i},\\
            0& v \in \fg_\alpha, \alpha \in \Delta_-^{\ge \alpha_i} 
        \end{cases}\textnormal{ and } \
        Sv = \begin{cases}
            c_\alpha d_\alpha v& v \in \fg_\alpha,\alpha \in \Delta_+^{\ge k_i\alpha_i}\\
            0 & v \in \fh \bigoplus_{\alpha \in \Delta \setminus \Delta_+^{\ge k_i\alpha_i}}\fg_\alpha,
        \end{cases}
    \end{equation*}

According to \cite[Theorem 3.1]{skrypnyk_spin_chains}, the $r$-matrix associated to \(W_A\) is given by 
\begin{equation}
    \begin{split}
        r(x,y) &= \frac{(A(x) \otimes \overline{A}(y))\Omega}{x-y} =
        \frac{((A(x)A(y)^{-1}-1) \otimes 1 + 1 \otimes 1)\Omega}{x-y}
        \\&
        =\frac{\Omega}{x-y} + \left(\frac{A(x)-A(y)}{x-y}A(y)^{-1} \otimes 1\right)\Omega,  
    \end{split}
\end{equation}
where \(\overline{A}\) is uniquely determined by \(\kappa(Aa,\overline{A}b) = \kappa(a,b)\). To obtain the expression above we have used \((1 \otimes \overline{A})\Omega = (A^{-1}\otimes 1)\Omega\).

Therefore, if \(A = 1 + Rx + Sx^2\) we obtain
    \begin{equation*}
        \begin{split}
            r(x,y) = \frac{\Omega}{x-y} + \left((R+ (x+y)S)(1+Ry+Sy^2)^{-1}\otimes 1\right)\Omega.
        \end{split}
    \end{equation*}
    Here we used that
    \begin{equation*}
        (1 + Tx)^{-1} = \sum_{k = 0}^n (-1)^n T^n x^n.
    \end{equation*}
    holds for all linear maps \(T \colon \fg[\![x]\!] \to \fg[\![x]\!]\).

Summarized, the \(r\)-matrix of the subspace \cref{eq:reg_subalgebra_type_i} is given by
\begin{equation}%
\label{eq:rmatrix_type_i}
    \begin{split}
        r(x,y) = \frac{\Omega}{x-y} + \left(\frac{\phi}{y\phi-1} \otimes 1\right)\Omega_\fh &+\sum_{\alpha \in \Delta_+^{< k_i\alpha_i} \cup \Delta_-^{<\alpha_i}}\frac{a_\alpha}{a_\alpha y-1}E_\alpha \otimes E_{-\alpha} \\ &+ \sum_{\alpha \in \Delta_+^{\ge k_i\alpha_i}}\frac{c_\alpha d_\alpha(x+y) - c_\alpha - d_\alpha}{(c_\alpha y-1)(d_\alpha y-1)}E_\alpha \otimes E_{-\alpha}.
    \end{split}
\end{equation}
Note that \(\frac{\phi}{y\phi-1} = \phi (y\phi-1)^{-1} = (y\phi-1)^{-1}\phi\) is unambiguous since \(\phi\) and \(y\phi - 1\) commute.

\subsection{Regular decompositions \(\fg(\!(x)\!) = \fg[\![x]\!]\oplus W\) of type \(0\)}
Let us now discuss regular subalgebras \(\fg(\!(x)\!) = \fg[\![x]\!] \oplus W\) of type \(0\), i.e.\ such that \(W \subseteq \fg[x^{-1}]\). Using \cref{lem:R_S_relations_type_i} with \(S= 0\) yields the following result.

\begin{corollary}%
\label{cor:W_A_nijenhuis}
    The vector space \(W_A = A(x^{-1}\fg[x^{-1}]) \subseteq \fg(\!(x)\!)\) is a subalgebra satisfying \(W_A \subseteq \fg[x^{-1}]\) if and only if \(A = 1 + Rx\) and the so-called Nijenhuis tensor 
    \begin{equation*}%
        N_R(a,b) \coloneqq R([Ra,b] + [a,Rb]-R[a,b]) - [Ra,Rb]
    \end{equation*}
    of \(R\) vanishes: \(N_R = 0\).
\end{corollary}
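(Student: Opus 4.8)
The plan is to derive the corollary from \cref{lem:R_S_relations_type_i} by setting $S = 0$, reading the biconditional as the conjunction of the two separate constraints ``$A = 1 + Rx$'' and ``$N_R = 0$'' and establishing each in turn.

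The first step is to show that, for a subalgebra $W_A$, the type-$0$ condition $W_A \subseteq \fg[x^{-1}]$ is equivalent to $A = 1 + Rx$. Working directly from the raw expansion \cref{eq:A_general_form}, one computes
\begin{equation*}
    A(x^{-1}a) = x^{-1}a + R(a) + S(a)\,x + \dots
\end{equation*}
for $a \in \fg$, where the omitted terms pair the higher coefficients of $A$ with the positive powers $x, x^2, \dots$. Requiring $A(x^{-1}a) \in \fg[x^{-1}]$ for every $a \in \fg$ forces each of these coefficients to vanish, i.e.\ $A = 1 + Rx$; conversely, once $A = 1 + Rx$, every $A(x^{-k}a)$ with $k \ge 1$ lies in $\fg[x^{-1}]$, so that $W_A \subseteq \fg[x^{-1}]$. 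Thus the type-$0$ condition is equivalent to $A = 1 + Rx$.

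With $A = 1 + Rx$ in hand, I would invoke \cref{lem:R_S_relations_type_i} with $S = 0$: conditions 3 and 4 become the trivial identities $0 = 0$, and condition 2 collapses to
\begin{equation*}
    R([Ra,b] + [a,Rb] - R[a,b]) - [Ra,Rb] = 0,
\end{equation*}
which is exactly $N_R(a,b) = 0$ for all $a,b \in \fg$. Assembling the two steps gives both implications: if $W_A \subseteq \fg[x^{-1}]$ is a subalgebra, then $A = 1 + Rx$ and $N_R = 0$; conversely, if $A = 1 + Rx$ and $N_R = 0$, then taking $S = 0$ verifies conditions 1--4 of \cref{lem:R_S_relations_type_i}, so $W_A$ is an upper-bounded subalgebra, and $A = 1 + Rx$ forces $W_A \subseteq \fg[x^{-1}]$ by the first step.

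The only delicate point is the $F[x]$-linear automorphism appearing in \cref{lem:R_S_relations_type_i}: there it serves to truncate a general series $A = 1 + Rx + Sx^2 + \dots$ down to three terms, so a priori the relations 2--4 hold only for a gauge-transformed $A$. I expect this to be the main, though modest, obstacle, and the way around it is precisely the first step above: the containment $W_A \subseteq \fg[x^{-1}]$ pins down $A = 1 + Rx$ on the nose, with no automorphism needed, so the relations of \cref{lem:R_S_relations_type_i} may be applied to $A$ itself. Equivalently, one could bypass \cref{lem:R_S_relations_type_i} altogether and substitute $A = 1 + Rx$ into condition \cref{eq:condition_on_A} of \cref{eq:normal_form_W_A}, where comparing the coefficients of $x^2$ on both sides produces $N_R = 0$ directly.
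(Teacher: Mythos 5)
Your proposal is correct and takes essentially the same route as the paper, which derives the corollary directly from \cref{lem:R_S_relations_type_i} with \(S = 0\) (the paper offers no further proof beyond that remark). Your extra care in observing that the containment \(W_A \subseteq \fg[x^{-1}]\) already forces \(A = 1 + Rx\) on the nose, so that no \(F[x]\)-linear automorphism is needed, fills in a detail the paper leaves implicit.
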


The vanishing condition for the Nijenhuis tensor can be expressed using the Jordan decomposition of $R$.

\begin{proposition}
\label{prop:solutions_R_give_filtration_alt}
    Let \(R \colon \fg \to \fg\) be a linear map, \(R = R_s + R_n\) be its Jordan decomposition and \(\lambda_1,\dots,\lambda_m\in F\) be the eigenvalues of the semi-simple linear map \(R_s\) with associated eigenspaces \(\fg_i \coloneqq \textnormal{Ker}(R_s - \lambda_i)\). Then 
    \begin{enumerate}
        \item \(N_{R_s} = 0\) if and only if \([\fg_i,\fg_j] \subseteq \fg_i + \fg_j\);

        \item $N_R = 0$ if and only if \(N_{R_s} = 0\) and 
        \begin{equation}%
        \label{eq:i_j_compatibility}
        \begin{split} &\pi_i\big(N_{R_n}(a,b) - (\lambda_i - \lambda_j)(R_n[a,b]-[R_na,b])\big)= 0,\\&
        \pi_j\big(N_{R_n}(a,b) + (\lambda_i - \lambda_j)(R_n[a,b]-[a,R_nb])\big)= 0
        \end{split}\end{equation}
        holds for all \(a \in \fg_i, b \in \fg_j\) and \(1\le i,j\le m\), where \(\pi_k\colon \fg = \bigoplus_{i = 1}^m\fg_i \to \fg_k\) is the canonical projection.
    \end{enumerate}
\end{proposition}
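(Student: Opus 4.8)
The plan is to work entirely inside the eigenspace decomposition $\fg = \bigoplus_k \fg_k$ of $R_s$ and to compute the components $\pi_k N_R(a,b)$ for homogeneous arguments $a \in \fg_i$, $b \in \fg_j$. The first thing I would record is that, since $R_n$ commutes with $R_s$, it preserves every eigenspace $\fg_k$; in particular $\pi_k R = (R_n + \lambda_k)\pi_k$, while $R_n a \in \fg_i$ and $R_n b \in \fg_j$. Writing $Ra = \lambda_i a + R_n a$ and $Rb = \lambda_j b + R_n b$ and expanding $N_R(a,b) = R([Ra,b]+[a,Rb]-R[a,b]) - [Ra,Rb]$, I would obtain, after collecting the four pieces $w \coloneqq \pi_k[a,b]$, $p \coloneqq \pi_k[R_n a,b]$, $q \coloneqq \pi_k[a,R_n b]$, $s \coloneqq \pi_k[R_n a, R_n b]$, a single closed formula for $\pi_k N_R(a,b)$ in which the coefficient of $w$ is exactly $-(\lambda_k - \lambda_i)(\lambda_k - \lambda_j)$.

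Specializing to $R_n = 0$ gives $N_{R_s}(a,b) = -\sum_k (\lambda_k-\lambda_i)(\lambda_k-\lambda_j)\,\pi_k[a,b]$, so $N_{R_s}=0$ forces $\pi_k[a,b]=0$ whenever $\lambda_k \neq \lambda_i,\lambda_j$, i.e.\ $[\fg_i,\fg_j]\subseteq\fg_i+\fg_j$, and conversely; this settles part (1). For part (2) the crucial algebraic fact I would extract from the general formula is that the terms acting on $w$ assemble into the operator $-(R_n+\lambda_k-\lambda_i)(R_n+\lambda_k-\lambda_j)$ on $\fg_k$. Because $R_n$ is nilpotent on $\fg_k$, each factor $R_n + c$ with $c \neq 0$ is invertible; hence for $k \neq i,j$ the operator multiplying $w$ is invertible.

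The main obstacle is the implication $N_R = 0 \Rightarrow N_{R_s}=0$, since a priori the $w$-term in $\pi_k N_R$ is entangled with the $p,q,s$ terms. I would resolve this by an induction along the nilpotent filtration $\fg_i^{[\ell]} \coloneqq \ker(R_n^\ell\vert_{\fg_i})$, proving that $\pi_k[a,b]=0$ for all $k\neq i,j$ and all $a\in\fg_i^{[\ell]}$, $b\in\fg_j^{[\ell']}$ by induction on $\ell+\ell'$. In the base case $a,b\in\ker R_n$ one has $p=q=s=0$, so $\pi_k N_R(a,b)=0$ reads $(R_n+\lambda_k-\lambda_i)(R_n+\lambda_k-\lambda_j)w=0$, and invertibility gives $w=0$. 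In the inductive step the terms $p,q,s$ involve $R_n a$ and $R_n b$, which lie one step lower in the filtration, so they vanish by the inductive hypothesis; the same invertibility argument then yields $w=0$. This proves $[\fg_i,\fg_j]\subseteq\fg_i+\fg_j$, i.e.\ $N_{R_s}=0$.

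Finally, assuming $N_{R_s}=0$, closedness makes $w,p,q,s$ vanish for every $k\neq i,j$, so $\pi_k N_R(a,b)=0$ automatically there, and only the components $k=i$ and $k=j$ survive. A direct comparison of these two surviving components with the right-hand sides of \cref{eq:i_j_compatibility} — using $\pi_i N_{R_n}(a,b)=R_n(p+q-R_n w)-s$ together with the definitions of the remaining terms — shows that $\pi_i N_R(a,b)$ and $\pi_j N_R(a,b)$ coincide exactly with the two displayed expressions, the case $i=j$ degenerating consistently since then $\lambda_i-\lambda_j=0$. Hence $N_R=0$ is equivalent to $N_{R_s}=0$ together with \cref{eq:i_j_compatibility}, completing part (2). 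The only genuinely delicate point is the filtration induction of the third paragraph; everything else is bookkeeping of the component formula.
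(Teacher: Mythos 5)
Your proof is correct, and it reaches the result by a somewhat different route than the paper. The paper's engine is the operator identity \cref{eq:induction_equation_R}, i.e.\ the observation that $N_R(a,b)=0$ is equivalent to $(R-\mu)(R-\lambda)[a,b]=(R-\mu)[(R-\lambda)a,b]+(R-\lambda)[a,(R-\mu)b]-[(R-\lambda)a,(R-\mu)b]$ for all $\lambda,\mu$; the hard inclusion $[\fg_i,\fg_j]\subseteq\fg_i+\fg_j$ is then obtained by a double induction on the ranks $r,t$ of generalized eigenvectors, showing $(R-\lambda_i)^r(R-\lambda_j)^t[v,w]=0$ and using that $(R-\lambda_i)|_{\fg_j}$ is injective for $i\neq j$. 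You instead project everything onto the $R_s$-eigenspaces and extract the closed formula $\pi_kN_R(a,b)=-(R_n+\lambda_k-\lambda_i)(R_n+\lambda_k-\lambda_j)\pi_k[a,b]+(\text{terms in }R_na,R_nb)$, then run a single induction on the nilpotency filtration of $R_n$, concluding via invertibility of $R_n+c$ for $c\neq 0$. The underlying mechanism (induction along the nilpotent degree, then comparison of the two surviving components with \cref{eq:i_j_compatibility}) is the same, but your packaging has a concrete advantage: the factored operator acting on $\pi_k[a,b]$ makes both directions of part (2) and the degeneration at $i=j$ visible in one formula, whereas the paper has to rederive the $k=i,j$ components separately at the end; the price is that your ``bookkeeping'' expansion of $\pi_kN_R$ is the one step a reader must actually verify, and it would be worth displaying it explicitly rather than asserting it.
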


\begin{proof}
    By direct calculation we see that the equality $N_R(a,b) = 0$ implies 
    \begin{equation}%
    \label{eq:induction_equation_R}
    \begin{aligned}
        (R-\mu)(R-\lambda)[a,b] &= (R-\mu)[(R-\lambda)a,b] + (R-\lambda)[a,(R-\mu)b] \\
        & \quad - [(R-\lambda)a,(R-\mu)b]
    \end{aligned}
    \end{equation}
    for all $\lambda, \mu \in F$. Assume $\lambda$ and $\mu$ are two eigenvalues of $R$ with generalized eigenvectors $v$ and $w$ of ranks $r$ and $t$ respectively.
    We prove now by induction that the equalities $(R-\lambda)^r v = (R-\mu)^t w = 0$ imply the equality
    $$
    (R - \lambda)^r (R-\mu)^t [v,w] = 0.
    $$
    If $v$ and $w$ are eigenvectors, then, using \cref{eq:induction_equation_R}, we get the base case
    $(R-\lambda)(R-\mu)[v,w] = 0$.
    Assume the statement is true for
    $r = 1$ and $1 \le t \le k-1$.
    Then,
    \begin{align*}
        (R-\lambda)(R-\mu)^k[v,w] &=
        (R-\mu)^{k-1}(R-\lambda)(R-\mu)[v,w] \\
        &=  (R-\lambda)(R-\mu)^{k-1}[v,(R-\mu)w] \\
        &= 0,
    \end{align*}
    where the last equality follows from the induction hypothesis and the fact that the vector 
    $(R-\mu)w$ has rank $k-1$.
    Assume now that the statement is true for all $1 \le r \le k-1$ and $t \ge 1$.
    We then have
    \begin{align*}
        (R-\lambda)^{k}(R-\mu)^t[v,w] &=
        (R-\lambda)^{k-1}(R-\mu)^{t-1}(R-\lambda)(R-\mu)[v,w] \\
        &= (R-\lambda)^{k-1}(R-\mu)^{t-1} \left\{ (R-\mu)[(R-\lambda)v,w] + (R-\lambda)[v,(R-\mu)w] \right. \\
        & \hspace{20.5em} 
        \left.  - [(R-\lambda)v,(R-\mu)w] \right\} \\
        &= (R-\lambda)^{k-1}(R-\mu)^t[(R-\lambda)v,w] \\
        & \quad + 
        (R-\lambda)^{k-1}(R-\lambda)(R-\mu)^{t-1}[v,(R-\mu)w] \\
        & \quad +
        (R-\lambda)^{k-1}(R-\mu)^{t-1} [(R-\lambda)v,(R-\mu)w] \\
        &= 0.
    \end{align*}
    This shows that \([\fg_i,\fg_j] \subseteq \fg_i + \fg_j\) for \(\lambda = \lambda_i\) and \(\mu = \lambda_j\), since \(\fg_i = \bigcup_{n = 1}^\infty \textnormal{Ker}((R-\lambda_i)^n)\) and \((R-\lambda_i)|_{\fg_j}\) has trivial kernel for all \(i \neq j\).

    It is now easy to see from \cref{eq:induction_equation_R} that \([\fg_i,\fg_j] \subseteq \fg_i + \fg_j\) implies \(N_{R_s} = 0\). 
    In particular, we see that these two conditions are equivalent. And if one of these equivalent conditions is satisfied, \cref{eq:induction_equation_R} for \(\lambda = \lambda_i\) and \(\mu = \lambda_j\) implies \(N_R(a,b) = N_{R_n}(a,b)\) for \(a,b \in \fg_i\) and
    \begin{equation*}    
    \begin{split}
        N_R(a,b) &= (R-\lambda_i)(R-\lambda_j)[a,b] - (R-\lambda_i)[a,R_nb] - (R-\lambda_j)[R_na,b] + [R_na,R_nb]
    \end{split} 
    \end{equation*}
    for \(a \in \fg_i,b \in \fg_j\).
    The latter can be rewritten as \cref{eq:i_j_compatibility} since 
    \begin{equation*}
        \begin{split}
            (R-\lambda_i)v &= (R_s + R_n-\lambda_i)v = (R_s-\lambda_i) v + R_n v = R_nv + (R_s - \lambda_i)\pi_j(v) \\&= R_nv + (\lambda_j - \lambda_i)\pi_j(v) 
        \end{split}
    \end{equation*}
    and similarly
    \begin{equation*}
        (R-\lambda_j)v = R_nv + (\lambda_i-\lambda_j)\pi_i(v)
    \end{equation*}
    holds for all \(v \in \fg_i \oplus \fg_j\), so
    \begin{equation*}   
    \begin{split}
        (R-\lambda_i)(R-\lambda_j)v &= (R-\lambda_i)(R_nv - (\lambda_i-\lambda_j)\pi_i(v)) \\&= R_n^2v -(\lambda_i-\lambda_j)R_n \pi_j(v) + (\lambda_i-\lambda_j)R_n \pi_i(v).
    \end{split}
    \end{equation*}
\end{proof}

\begin{remark}
    Observe that in particular, the Nijenhuis tensor of \(R_n|_{\fg_i}\) vanishes for all \(1 \le i \le m\). Therefore, the construction of all linear maps satisfying \(N_R = 0\) can be split into two steps: first find a decomposition \(\fg = \bigoplus_{i = 1}^m\fg_i\) satisfying \([\fg_i,\fg_j] \subseteq \fg_i + \fg_j\) and then find nilpotent linear maps \(\{R_{n,i} \colon \fg_i \to \fg_i\}_{i = 1}^m\) satisfying  \cref{eq:i_j_compatibility}. 
\end{remark}

The following corollary in case that \(R_n = 0\) was already noticed in \cite{golubchik_sokolov_one_more_type}.

\begin{corollary}\label{cor:solutions_R_give_filtration_alt}
    Let \(R \colon \fg \to \fg\) be a diagonalizable linear map and \(a_1,\dots,a_m\in F\) be its eigenvalues with associated eigenspaces \(\fg_i \coloneqq \textnormal{Ker}(R - a_i)\). Then $N_R = 0$ if and only if \([\fg_i,\fg_j] \subseteq \fg_i + \fg_j\) for $1 \le i,j \le m$.
\end{corollary}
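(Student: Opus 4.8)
The plan is to derive this corollary as the immediate specialization of \cref{prop:solutions_R_give_filtration_alt} to the case of a diagonalizable $R$. Since $R$ is diagonalizable, its nilpotent part vanishes, i.e.\ $R_n = 0$ and $R = R_s$. In this situation the eigenvalues $a_i$ of $R$ coincide with the eigenvalues $\lambda_i$ of $R_s$, and the generalized eigenspaces $\fg_i = \textnormal{Ker}(R_s - \lambda_i)$ appearing in the proposition are exactly the honest eigenspaces $\fg_i = \textnormal{Ker}(R - a_i)$ appearing in the corollary.

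First I would invoke part (2) of \cref{prop:solutions_R_give_filtration_alt}, which states that $N_R = 0$ holds if and only if $N_{R_s} = 0$ together with the compatibility conditions \cref{eq:i_j_compatibility}. The key observation is that, with $R_n = 0$, the conditions \cref{eq:i_j_compatibility} become vacuous: every summand in those equations carries a factor of $R_n$, so both lines reduce to $0 = 0$ and impose no constraint. Hence for a diagonalizable $R$ the statement $N_R = 0$ is equivalent to $N_{R_s} = N_R = 0$ alone.

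Next I would apply part (1) of the same proposition, which characterizes $N_{R_s} = 0$ as equivalent to the containment $[\fg_i, \fg_j] \subseteq \fg_i + \fg_j$ for all $i,j$. Chaining the two equivalences yields precisely the claimed statement: $N_R = 0$ if and only if $[\fg_i, \fg_j] \subseteq \fg_i + \fg_j$ for $1 \le i,j \le m$.

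There is essentially no obstacle here, since the entire content has already been established in the preceding proposition; the only thing to verify carefully is that $R_n = 0$ genuinely kills every term of \cref{eq:i_j_compatibility}, which is transparent from the explicit appearance of $R_n$ in each expression. The corollary is therefore a corollary in the strict sense, and its proof amounts to recording this specialization together with the remark (attributed to \cite{golubchik_sokolov_one_more_type}) that the diagonalizable case was already known.
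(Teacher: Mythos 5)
Your proof is correct and matches the paper's intent exactly: the corollary is stated without proof as the immediate specialization of \cref{prop:solutions_R_give_filtration_alt} to $R_n = 0$, where the conditions \cref{eq:i_j_compatibility} are indeed vacuous since every term carries a factor of $R_n$. Nothing is missing.
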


\begin{remark}%
\label{rem:diagonalizable_R_from_decomposition}
    In view of \cref{cor:solutions_R_give_filtration_alt},
    we can construct a solution $R$ to $N_R = 0$ by first taking a decomposition $\fg = \bigoplus_{i = 1}^m \fg_i$ with the property that $\fg_i$ and $\fg_i \oplus \fg_j$ are subalgebras of $\fg$
    and then defining 
    $$
    R \coloneqq \sum_{i = 1}^m \lambda_i \pi_i,
    $$ 
    where $\pi_i$ is the projection of $\fg$ onto $\fg_i$ and $\lambda_i \in F$ is an arbitrary constant.
    In particular, we can start with a regular decomposition of $\fg$ defined in \cref{sec:regular_decompositions}.
\end{remark}

Let us now specify \cref{prop:solutions_R_give_filtration_alt} for regular partitions.

\begin{theorem}%
\label{prop:W_from_regular_splittings}
Let 
$$
    \Delta = \bigsqcup_{i = 1}^n \Delta_i
$$
be a regular partition,
\(a_1,\dots,a_n\) be some constants in \(F\) 
and \(\phi \colon \fh \to \fh\) be a linear map such that
\begin{equation}%
\label{eq:condition_on_phi}
(\phi-a_i)(\phi-a_j)H_\alpha = 0 \ \text{ for } \alpha \in \Delta_i, \ -\alpha \in \Delta_j.   
\end{equation} 
Then
\begin{equation}%
\label{eq:regular_subalgebra_of_type_0_standard_form}
        W \coloneqq W_\phi \bigoplus_{i = 1}^n
        \bigoplus_{\alpha \in \Delta_i}\fg^{a_i}_{\alpha},    
    \end{equation}
    is a regular subalgebra of type $0$, where
    \begin{equation}%
    \label{eq:W_phi_construction}
        W_\phi \coloneqq \{x^{-n}(x^{-1}h - \phi(h)) \mid h \in \fh,n\in\bZ_{\ge 0}\}
    \end{equation}
    This assignment is a bijection between the set of data \(( \Delta = \bigsqcup_{i = 1}^n \Delta_i, \{a_1,\dots,a_n\}, \phi)\) and regular decompositions 
    \(\fg(\!(x)\!) = \fg[\![x]\!] \oplus W\) of type $0$.

    Furthermore, the \(r\)-matrix of \(W\) in \cref{eq:regular_subalgebra_of_type_0_standard_form} has the form 
    \begin{equation*}
        r(x,y) = \frac{\Omega}{x-y} + \left(\frac{\phi}{y\phi - 1} \otimes 1\right)\Omega_\fh + \sum_{i = 1}^n \frac{a_i\Omega_i}{a_iy-1},
    \end{equation*}
    where \(\Omega_i = \sum_{\alpha \in \Delta_i}E_\alpha \otimes E_{-\alpha}\).
\end{theorem}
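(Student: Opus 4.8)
The plan is to realize $W$ as $W_A$ for an explicit endomorphism $A = 1 + Rx$ and then invoke \cref{cor:W_A_nijenhuis}. Reading off \cref{eq:regular_subalgebra_of_type_0_standard_form} and \cref{eq:W_phi_construction}, define the $F$-linear map $R \colon \fg \to \fg$ by $R|_\fh = -\phi$ and $R(E_\alpha) = -a_i E_\alpha$ for $\alpha \in \Delta_i$; then $A(x^{-1}h) = x^{-1}h - \phi(h)$ and $A(x^{-1}E_\alpha) = (x^{-1} - a_i)E_\alpha$, so that $W = A(x^{-1}\fg[x^{-1}]) = W_A$. Since $R$ preserves the root space decomposition and acts by scalars there, it commutes with $\ad(h)$ for every $h \in \fh$; hence $A$ commutes with $\ad(\fh)$, $W_A$ is $\fh$-invariant, and it is visibly $F[x^{-1}]$-invariant and contained in $\fg[x^{-1}]$. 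As $A \in \operatorname{End}(\fg) \otimes F[\![x]\!]$ is an $F(\!(x)\!)$-linear automorphism preserving $\fg[\![x]\!]$, it carries $\fg(\!(x)\!) = \fg[\![x]\!] \oplus x^{-1}\fg[x^{-1}]$ to $\fg(\!(x)\!) = \fg[\![x]\!] \oplus W_A$, so $W_A$ is complementary to $\fg[\![x]\!]$. Thus the only substantive point is that $W_A$ is a subalgebra, which by \cref{cor:W_A_nijenhuis} is equivalent to the vanishing of the Nijenhuis tensor $N_R$.

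The heart of the proof is the computation of $N_R$ on pairs of basis vectors, organized by the possible brackets. On $\fh \times \fh$ and on $\fh \times \fg_\alpha$ the tensor vanishes identically, using $[\fh,\fh]=0$, $R\fh \subseteq \fh$, and that $R$ acts by the scalar $-a_i$ on $\fg_\alpha$. For $E_\alpha, E_\beta$ with $\alpha \in \Delta_i$, $\beta \in \Delta_j$ and $\alpha+\beta \in \Delta_k$ a root, a direct substitution gives $N_R(E_\alpha,E_\beta) = -(a_k-a_i)(a_k-a_j)[E_\alpha,E_\beta]$; since the partition is regular, $\Delta_i \sqcup \Delta_j$ is closed, forcing $\alpha+\beta \in \Delta_i \cup \Delta_j$ and hence $a_k \in \{a_i,a_j\}$, so the factor vanishes. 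Finally, for $\beta = -\alpha$ with $\alpha \in \Delta_i$, $-\alpha \in \Delta_j$, the same substitution yields $N_R(E_\alpha,E_{-\alpha}) = -(\phi - a_i)(\phi - a_j)H_\alpha$, which vanishes exactly by the hypothesis \cref{eq:condition_on_phi}. This last case is the crux: it is where the full, possibly non-semisimple $\phi$ enters, and it explains why the compatibility condition is phrased with $\phi$ rather than its semisimple part. One may alternatively invoke \cref{prop:solutions_R_give_filtration_alt} after separating $R = R_s + R_n$, but the direct computation absorbs the nilpotent part of $\phi$ in one stroke. Together these cases give $N_R = 0$, so $W$ is a regular subalgebra of type $0$.

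For the converse and the bijection, start from an arbitrary regular $W \subseteq \fg(\!(x)\!)$ of type $0$. By \cref{lemm:regular_subalgebra_standard_form} together with \cref{cor:W_A_nijenhuis} it equals $W_A$ for a unique $A = 1 + Rx$ with $N_R = 0$; $\fh$-invariance forces $R\fh \subseteq \fh$ and $R(E_\alpha) = -a_\alpha E_\alpha$ for scalars $a_\alpha \in F$, so set $\phi \coloneqq -R|_\fh$. The $E_\alpha$--$E_\beta$ case of the Nijenhuis computation read in reverse shows $a_{\alpha+\beta} \in \{a_\alpha, a_\beta\}$ whenever $\alpha+\beta$ is a root, which is precisely the statement that the level sets $\Delta_c \coloneqq \{\alpha \mid a_\alpha = c\}$ are closed and pairwise-union-closed, i.e.\ form a regular partition; taking these level sets as $\Delta_1,\dots,\Delta_n$ with distinct values $a_1,\dots,a_n$ recovers the combinatorial datum, and the $E_\alpha$--$E_{-\alpha}$ case yields \cref{eq:condition_on_phi} for $\phi$. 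These two assignments are mutually inverse once the $a_i$ are required to be distinct, since merging two parts carrying the same constant produces the same $W$; this gives the claimed bijection.

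The $r$-matrix formula then follows by specializing the general expression \cref{eq:rmatrix_type_i}: for type $0$ we have $S = 0$, so the quadratic-denominator sum is absent and every root $\alpha$ contributes a single simple pole with $a_\alpha = a_i$ for $\alpha \in \Delta_i$. Collecting these summands root-by-root inside each part and using $\Omega_i = \sum_{\alpha \in \Delta_i} E_\alpha \otimes E_{-\alpha}$ turns $\sum_{\alpha \in \Delta}\frac{a_\alpha}{a_\alpha y - 1}E_\alpha \otimes E_{-\alpha}$ into $\sum_{i=1}^n \frac{a_i \Omega_i}{a_i y - 1}$, which together with the $\Omega_\fh$-term is exactly the asserted formula. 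I expect no difficulty here beyond the grouping; the genuine obstacle throughout is the opposite-root case of the Nijenhuis computation, where matching $-(\phi-a_i)(\phi-a_j)H_\alpha$ against \cref{eq:condition_on_phi} is what pins down the correct compatibility hypothesis on $\phi$.
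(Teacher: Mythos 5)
Your proposal is correct and follows essentially the same route as the paper: realize $W$ as $W_A$ with $A = 1+Rx$, reduce everything to the vanishing of the Nijenhuis tensor via \cref{cor:W_A_nijenhuis}, and identify the crux as the computation $N_R(E_\alpha,E_{-\alpha}) = -(\phi-a_i)(\phi-a_j)H_\alpha$, with the $r$-matrix read off from \cref{eq:rmatrix_type_i}. The only (harmless) difference is that you verify $N_R=0$ on the root spaces by direct substitution and closedness of $\Delta_i\sqcup\Delta_j$, whereas the paper routes this step through \cref{prop:solutions_R_give_filtration_alt} and the Jordan decomposition of $R$ — an alternative you yourself note.
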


\begin{proof}
    Let \(W = W_A\) for \(A = 1 + Rx\),
    \(R = R_s + R_n\) be the Jordan decomposition of $R$ and 
    \(\lambda_1,\dots,\lambda_m\in F\) be the eigenvalues of the semi-simple part \(R_s\) with associated eigenspaces 
    \(\fg_i \coloneqq \textnormal{Ker}(R_s - \lambda_i)\). 
    By virtue of \cref{cor:W_A_nijenhuis}, \(W \subseteq \fg(\!(x)\!)\) is a subalgebra if and only if \(N_R = 0\).

    Let us observe that \(W\) is \(\fh\)-invariant if and only if \([\textnormal{ad}(\fh),R] = 0\). 
    Therefore, for every \(i \in \{1,\dots,m\}\) we get \(\fg_i = \fs_i \bigoplus_{\alpha \in \Delta_i}\fg_\alpha\) for some \(\Delta_i \subseteq \Delta\). 
    The decomposition 
    \(\fg = \bigoplus_{i = 1}^n\fg_i\) is regular if and only if \([\fg_i,\fg_j] \subseteq \fg_{i} + \fg_j\). 
    However, by \cref{prop:solutions_R_give_filtration_alt}
    the latter is equivalent to \(N_{R_s} = 0\), which is a necessary condition for $N_R = 0$.

    Since \(\fg_\alpha\) is one-dimensional for all \(\alpha \in \Delta \cup \{0\}\), we have \(R|_{\fg_\alpha} = R_s|_{\fg_\alpha}\). This implies that \(N_R(a,b) = N_{R_s}(a,b) = 0\) for all \(a\in\fg_\alpha\), \(b \in \fg_\beta\) with \(\alpha,\beta \in \Delta \cup \{0\}\) and \(\alpha + \beta \neq 0\). Here, \(\fg_0 = \fh\) was used. Moreover, \(R(\fh) \subseteq \fh\) and \([\fh,\fh] = 0\) implies \(N_R(a,b) = 0\) for all \(a,b\in \fh\).
    
    Put \(\phi \coloneqq R|_{\fh} \colon \fh \to \fh\) and assume \(\alpha \in \Delta_i\) and \(-\alpha \in \Delta_j\). Then
    \begin{equation*}
        N_R(E_\alpha,E_{-\alpha}) = (-\phi^2 + (a_i + a_j)\phi - a_ia_j)H_\alpha = -(\phi-a_i)(\phi-a_j)H_\alpha
    \end{equation*}
    holds. In particular, we see that \(N_R = 0\) if and only if \cref{eq:condition_on_phi} holds.

    The form of the \(r\)-matrix follows immediately from \cref{eq:rmatrix_type_i}.
\end{proof}

\begin{remark}
    At first glance, it is unclear whether the system of equations \cref{eq:condition_on_phi} is consistent.
    However, by \cref{prop:m_partititons} for any regular partition \(\Delta = \bigsqcup_{i = 1}^n \Delta_i\) we can find a decomposition 
    \(\fh = \bigoplus_{i = 1}^n\fs_i\) 
    such that 
    \(\fg = \bigoplus_{i = 1}^n \fs_i \bigoplus_{\alpha \in \Delta_i}\fg_\alpha\) 
    is a regular decomposition.
    Then we can define \(\phi\) satisfying 
    \cref{eq:condition_on_phi} by letting 
    \(\phi(v) = a_i v\) for \(v \in \fs_i\).
\end{remark}

\begin{remark}%
\label{rem:h_invariant_constant_gcybe}
    Note that solutions $r \in \fg \ot \fg$ to the constant generalized classical Yang-Baxter equation
    \begin{equation}%
    \label{eq:gcybe_for_r}
        [r^{12},r^{13}] + [r^{12},r^{23}] + [r^{32},r^{13}]
        =
        0
    \end{equation}
    are in bijection with subalgebras
    $\mathfrak{w} \subseteq \fg[x^{-1}]/x^{-2}\fg[x^{-1}]$,
    such that $\fg \oplus \mathfrak{w} = \fg[x^{-1}]/x^{-2}\fg[x^{-1}]$.
    Indeed, this follows from the fact that tensors \(r \in \fg \otimes \fg\) satisfying \cref{eq:gcybe_for_r} are in bijection with solutions of the GCYBE of the form \(\frac{\Omega}{x-y} + r\)
    and \cite{abedin_maximov_stolin_quasibialgebras}.
    
    We could also consider the classification of such tensors \(r\) under the assumption of \(\fh\)-invariance. It turns out that this classification is trivial: these tensors are precisely arbitrary tensors in \(\fh \otimes \fh\). Let us briefly explain why. The \(\fh\)-invariance is again equivalent to the \(\fh\)-invariance of $\mathfrak{w}$ and it is easy to see that there exists a linear map \(\phi \colon \fh \to \fh\) such that
    \begin{equation*}
        \mathfrak{w} = \textnormal{span}_F\{x^{-1} h - \phi(h)\mid h \in \fh\} \oplus \sum_{\alpha \in \Delta} (x^{-1}-a_\alpha)\fg_\alpha
    \end{equation*}
    for a set \(\{a_\alpha\}_{\alpha \in \Delta} \subset F\). Assume that some \(a_\alpha \neq 0\). Then 
    \[(x^{-1}-a_\alpha)(x^{-1}-a_{-\alpha})(x^{-1}-a_\alpha)\fg_\alpha \subseteq (x^{-1}-a_\alpha)\fg_\alpha.\]
    This implies that \(2a_\alpha a_{-\alpha} + a_\alpha^2 = \lambda\) and \(a_\alpha^2 a_{-\alpha} = \lambda a_\alpha\) so \(a_\alpha a_{-\alpha} = \lambda\) and \(2\lambda + a_{\alpha}^2 = \lambda\), so \(a_\alpha^2 = -\lambda\). 
    Plugging this back into \(a^2_\alpha a_{-\alpha} = \lambda a_\alpha\) gives \(a_{\alpha} = -a_{-\alpha}\). But this implies that 
    \[[(x^{-1}-a_\alpha)\fg_{\alpha},(x^{-1}+a_{\alpha})\fg_{-\alpha}] = F H_\alpha \subseteq \mathfrak{w}\cap \fg\]
    which is a contradiction. 
    We conclude \(a_\alpha = 0\) for all \(\alpha \in \Delta\). Therefore, every \(\fh\)-invariant subalgebra \(\mathfrak{w} \subseteq \fg[x^{-1}]/x^{-2}\fg[x^{-1}]\) complementary to $\fg$ is of the form
    \begin{equation*}
        \mathfrak{w} = \textnormal{span}_F\{x^{-1} h - \phi(h)\mid h \in \fh\} \oplus x^{-1}(\fn_+ \oplus \fn_-)
    \end{equation*}
    for some linear map \(\phi \colon \fh \to \fh\). 
    The \(r\)-matrix corresponding to \(\mathfrak{w}\) is of the form 
    $$
    r = \phi(h_i) \ot h_i \in \fh \otimes \fh
    $$
for an orthonormal basis \(\{h_i\}_{i = 1}^\ell\) of \(\fh\). 
Since there are no restrictions on \(\phi\), $r$ can be an arbitrary tensor in \(\fh \otimes \fh\).
\end{remark} 

\subsection{Regular decompositions \(\fg(\!(x)\!) = \fg[\![x]\!] \oplus W\) of type \(1\)}%
\label{subsec:regular_decomp_0_type_1}
Let us now consider regular splittings \(\fg(\!(x)\!) = \fg[\![x]\!] \oplus W\) of type \( 1\). 
In view of \cref{eq:max_order_i_roots}, we have
\begin{equation}
    W \subseteq 
    \mathfrak{P}_i = (\fh \oplus \fl \oplus x \fc \oplus x^{-1} \fr) [x^{-1}]
\end{equation}
for \(i \in \{1,\dots,n\}\) such that the simple root \(\alpha_i\) has multiplicity \(k_i = 1\).
In this case the inclusion $[\fl, \fl] \subseteq \fh \oplus \fl$ holds. Using this observation we can immediately concoct the following example.
\begin{example}%
\label{ex:type_1_trivial}
    For two different constants $a_1, a_2 \in F^\times$ define
    \begin{equation*}
    \begin{aligned}
        W = (\fh \oplus \fl)^{a_1}
        \oplus  \fc^{a_1,a_2}  \oplus \fr^0. 
    \end{aligned}
    \end{equation*}
    Clearly, it is a regular subalgebra of $\mathfrak{P}_i$ of type $1$.
    By \cref{lem:R_S_relations_type_i}, it must be of the form \( W = (1+Rx+Sx^2)x^{-1}\fg[x^{-1}]\) for some endomorphisms $R$ and $S$ of $\fg$.
    Indeed, take 
    \begin{equation*}
        Rv = \begin{cases}
            -a_1v & v \in 
            \fh \oplus \fl, \\
            -(a_1 + a_2)v& v \in \fc, \\
            0 & v \in \fr
        \end{cases}\textnormal{ and } \
        Sv = \begin{cases}
            a_1a_2v& v \in \fc,\\
            0 & v \in \fh \oplus \fl \oplus \fr.
        \end{cases}
    \end{equation*}
    In particular, by \cref{lem:R_S_relations_type_i}, these endomorphisms solve $N_R = dS$.
    We do not allow $a_1$ or $a_2$ be equal to $0$, because then the type of $W$ becomes $0$.
\end{example}

We can also make an example of a regular subalgebra of type $1$ using a 2-regular partition of $\Delta$.

\begin{example}%
\label{ex:type_1_from_2_regular}
Let $\Delta^{< \alpha_i}$ be the root system one obtains from $\Delta$ by removing all the roots containing $\alpha_i$, with $k_i = 1$.
In general, it is a union of two irreducible root systems.
A partition
$$
\Delta^{< \alpha_i} = \Delta_1 \sqcup \Delta_2
$$ 
into two closed subsets gives another two-parameter example of a type $1$ regular subalgebra:
    \begin{equation*}
    \begin{aligned}
        W =W_\phi 
        \bigoplus_{\alpha \in \Delta_1}
        \fg_\alpha^{a_1} 
        \bigoplus_{\alpha \in \Delta_2} 
        \fg_\alpha^{a_2} 
        \oplus  \fc^{a_1,a_2}  \oplus \fr^0, 
    \end{aligned}
    \end{equation*}
    where $a_1, a_2 \in F^\times$ and $W_\phi$ is given by the same formula \cref{eq:W_phi_construction}, but we require the linear map \(\phi \colon \fh \to \fh\) to satisfy
    \begin{equation*}
    \begin{split}
        (\phi-a_i)(\phi-a_j)H_\alpha = 0 & \textnormal{ for }\alpha \in \Delta_i, \  -\alpha \in \Delta_j, \\
        (\phi - a_1)(\phi-a_2)H_\alpha = 0 &
        \textnormal{ for } \alpha \in \Delta_+^{\ge \alpha_i}.
    \end{split}
    \end{equation*}
    Indeed, this follows from \cref{lem:R_S_relations_type_i} with endomorphisms $R$ and $S$ given by
    \begin{equation*}
        Rv = \begin{cases}
            -a_iv & v \in 
            \bigoplus_{\alpha \in \Delta_i}\fg_\alpha, \\
            -(a_1 + a_2)v& v \in \fc, \\
            0 & v \in \fr, \\
            -\phi(v)& v \in \fh 
        \end{cases}\textnormal{ and } \
        Sv = \begin{cases}
            a_1a_2v& v \in \fc,\\
            0 & v \in \fh \oplus \fl \oplus \fr.
        \end{cases}
    \end{equation*}
    Observe, that by letting $a_1 = a_2$ and $\phi = a_1$ we obtain a specification of \cref{ex:type_1_trivial}.
\end{example}

We now present more sophisticated constructions using $m$-regular decompositions. 
As we will see, we can construct type-1 regular subalgebras with an arbitrary number of parameters $a_i \in F$ for systems of types $A_n, C_n$ and $D_n$.
In all other cases, regular subalgebras of type 1 are necessarily of the form presented in \cref{ex:type_1_from_2_regular}.

\paragraph{Type $A_n$.}
If we want to obtain a regular subalgebra of type $1$ with $m \ge 3$ parameters, it is natural to start with an $m$-regular decomposition of $\fg$ and try extending it.
The only irreducible root systems, admitting $m$-regular partitions with $m \ge 3$, are precisely $A_n$, $n \ge 2$. 
Therefore, the following construction will be the main building block for the latter examples.

Assume $W \subseteq \mathfrak{P}_i$ is a type-$1$ regular subalgebra. Denote by $R_1$ and $R_2$ the subsystems of $\Delta$ generated by 
$\{ \alpha_1, \dots, \alpha_{i-1} \}$ and
$\{ \alpha_{i+1}, \dots, \alpha_{n}\}$.
When $i = 1$ or $n$, we allow $R_1$ or $R_2$ respectively to be the empty set. 
Such a $W$ must have the following form
\begin{equation}%
\label{eq:regular_type_1_An}
\begin{aligned}
    W = W_\fh
    &\overbrace{\bigoplus_{\alpha \in R_1}  \fg_\alpha^{a_\alpha} \bigoplus_{\alpha \in R_2} \fg_\alpha^{b_\alpha}}^{\text{partition of } R_1 \textnormal{ and } R_2}
    \hspace{-0.5em}\bigoplus_{\alpha \in \Delta_+^{\ge  \alpha_i}} \fg_\alpha^{c_\alpha,d_\alpha}
    \bigoplus_{\alpha \in \Delta_-^{\ge \alpha_i}}\fg_{\alpha}^0.  
\end{aligned}
\end{equation}
Note that if we group together the roots in $R_j$ having the same constant $a_\alpha$, we obtain a regular partition of $R_j$ in the sense of \cref{eq:reg_partition_def}.

Define $c \coloneqq c_{\alpha_i}$, $d = d_{\alpha_i}$ and let
$\Lambda \coloneqq \{ a_\alpha \mid \alpha \in \pi \setminus \{ \alpha_i \} \} \cup \{ c,d\}$
be the set of constants in \cref{eq:regular_type_1_An} associated with simple roots.
The following statement says, that these constants determine constants of all other roots. 
\begin{lemma}%
\label{lem:parameters_of_An_simple_roots}
    Let $C \coloneqq \{a_\alpha \mid \alpha \in \Delta_{\pm}^{< \alpha_i}\} \cup \{  c_\beta, d_\beta \mid \beta \in \Delta_+^{\ge \alpha_1} \}$ be the set of all constants in \nolinebreak\cref{eq:regular_type_1_An}.
    Then $C \subseteq \Lambda$.
\end{lemma}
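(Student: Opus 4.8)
The plan is to exploit the multiplicative relations satisfied by the series $f_\alpha$ attached to each root in the standard form of \cref{lemm:regular_subalgebra_standard_form}. Recall from \cref{eq:regular_type_1_An} that $f_\alpha = x^{-1} - a_\alpha$ for $\alpha \in \Delta_\pm^{<\alpha_i}$, that $f_\beta = x(x^{-1}-c_\beta)(x^{-1}-d_\beta)$ for $\beta \in \Delta_+^{\ge\alpha_i}$, and that $f_{-\beta} = x^{-1}$ for $-\beta \in \Delta_-^{\ge\alpha_i}$; the content of the lemma is the relation $f_\alpha f_\beta = (x^{-1}+c_{\alpha\beta})f_{\alpha+\beta}$ of \cref{eq:commutators_regular_W_0}, valid whenever $\alpha,\beta,\alpha+\beta\in\Delta$. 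Since each such identity equates two monic polynomials in $x^{-1}$, comparing their multisets of roots turns every instance into an equality of finite sets of constants. I would run inductions over the height of roots, using the interval description $\alpha = \alpha_p + \dots + \alpha_q$ of the roots of $A_n$.

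First, for positive roots containing $\alpha_i$, I show by induction on $q-p$ that $c_\beta,d_\beta\in\Lambda$ for every $\beta = \alpha_p+\dots+\alpha_q\in\Delta_+^{\ge\alpha_i}$. The base case $\beta=\alpha_i$ is the definition of $c,d\in\Lambda$. For $\beta\neq\alpha_i$ an extremal index satisfies $q>i$ or $p<i$; removing the corresponding simple root $\alpha_j$ (which lies in $R_1$ or $R_2$, so $a_{\alpha_j}\in\Lambda$) gives $\gamma\coloneqq\beta-\alpha_j\in\Delta_+^{\ge\alpha_i}$ of smaller height. Dividing $f_\gamma f_{\alpha_j}=(x^{-1}+c_{\gamma,\alpha_j})f_\beta$ by $x$ and comparing roots yields $\{c_\beta,d_\beta,-c_{\gamma,\alpha_j}\}=\{c_\gamma,d_\gamma,a_{\alpha_j}\}$, so $\{c_\beta,d_\beta\}\subseteq\{c_\gamma,d_\gamma,a_{\alpha_j}\}\subseteq\Lambda$ by the inductive hypothesis. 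Second, for positive roots not containing $\alpha_i$, the same induction with both factors of the form $x^{-1}-a$ gives, for $\alpha=\gamma+\alpha_j$ inside $R_1$ or $R_2$, the quadratic matching $\{a_\gamma,a_{\alpha_j}\}=\{-c_{\gamma,\alpha_j},a_\alpha\}$, hence $a_\alpha\in\{a_\gamma,a_{\alpha_j}\}\subseteq\Lambda$; the base cases are the simple roots, whose constants lie in $\Lambda$ by definition.

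The main obstacle is the third case, the negative roots $-\gamma$ with $\gamma\in\Delta_+^{<\alpha_i}$: their constants $a_{-\gamma}$ do not appear in $\Lambda$ directly and cannot be reached by adding simple roots among $\Delta_\pm^{<\alpha_i}$ alone. The trick is to route through the roots containing $\alpha_i$, whose constants are already controlled by the first step. For $\gamma=\alpha_p+\dots+\alpha_q\in R_1$ (the case $R_2$ being symmetric) I set $\delta\coloneqq\alpha_{q+1}+\dots+\alpha_i$ and $\beta\coloneqq\alpha_p+\dots+\alpha_i$, both in $\Delta_+^{\ge\alpha_i}$, so that $-\gamma=\delta+(-\beta)$ with $-\beta\in\Delta_-^{\ge\alpha_i}$. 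Using $f_{-\beta}=x^{-1}$, the identity $f_\delta f_{-\beta}=(x^{-1}+c_{\delta,-\beta})f_{-\gamma}$ simplifies to $(x^{-1}-c_\delta)(x^{-1}-d_\delta)=(x^{-1}+c_{\delta,-\beta})(x^{-1}-a_{-\gamma})$, and comparing roots gives $a_{-\gamma}\in\{c_\delta,d_\delta\}\subseteq\Lambda$ by the first step.

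Since these three steps exhaust $C$, the containment $C\subseteq\Lambda$ follows. I expect the only delicate points to be the bookkeeping that the chosen $\delta,\beta$ are genuine roots with the asserted $\alpha_i$-content and that the relevant sums remain inside $\Delta$, both of which are immediate from the interval combinatorics of $A_n$; the root-matching arguments themselves reduce to uniqueness of factorization of monic polynomials in $x^{-1}$.
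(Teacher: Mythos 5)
Your proof is correct and follows essentially the same strategy as the paper's: both arguments exploit the closure relations $f_\alpha f_\beta = (x^{-1}+c_{\alpha\beta})f_{\alpha+\beta}$ from \cref{lemm:regular_subalgebra_standard_form} to match constants by comparing roots of monic polynomials in $x^{-1}$, building up positive roots from simple ones and reaching the negative roots of $R_1,R_2$ by routing through $\Delta_+^{\ge\alpha_i}$ and $\Delta_-^{\ge\alpha_i}$. The only differences are organizational (you induct on interval length and treat arbitrary negative roots directly, while the paper first handles negative simple roots and then sums), and they do not change the substance of the argument.
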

\begin{proof}
    If $\gamma$ is a positive root in $R_1$, then it can be written as $\gamma = \alpha_{k_1} + \alpha_{k_2} + \dots + \alpha_{k_m}$ for some simple roots
    $\alpha_{k_j} \in \{\alpha_1, \dots, \alpha_{i-1} \}$.
    Consequently, the constant $a_\gamma$ lies in the set $\{a_{\alpha_{k_j}} \mid 1 \le j \le m\} \subseteq \Lambda$.
    The same argument shows that $a_\gamma \in \Lambda$ for any positive $\gamma \in R_2$.

    Any root $\gamma \in \Delta_+^{\ge \alpha_i}$ is necessariy of the form 
    $\gamma = \alpha + \alpha_i + \beta$, for some non-negative roots $\alpha \in R_1 \cup \{ 0\}$ and $\beta \in R_2\cup \{ 0\}$.
    Assume first, that $\alpha \neq 0$ and $\beta = 0$.
    Then by commuting 
    $$\fg_{\alpha_i}^{c,d} = x(x^{-1}-c)(x^{-1}-d)\fg_{\alpha_i}[x^{-1}] \ \text{ with } \ 
    \fg_\alpha^{a_\alpha} = (x^{-1}-a_\alpha)\fg_{\alpha}[x^{-1}]$$ 
    we get the equality
    $$
      (x^{-1}-q)(x^{-1} - c_\gamma)(x^{-1} - d_{\gamma})
      =
      (x^{-1} - c)(x^{-1} - d)(x^{-1} - a_{\alpha})
    $$
    for some $q \in F$.
    Consequently $\{ c_\gamma, d_\gamma \} \subseteq \{ a_{\alpha}, c, d \} \subseteq \Lambda$.
    Similarly, for $\alpha = 0, \beta \neq 0$
    we have $c_\gamma, d_\gamma \in \Lambda$.
    Finally, writing an arbitrary root $\gamma \in \Delta_+^{\ge \alpha_i}$
    as $\gamma = \alpha + (\alpha_i + \beta)$
    and using the previous results we get the containments $c_\gamma, d_\gamma \in \Lambda$.

    It now remains to prove, that $a_{-\gamma} \in \Lambda$ for negative $-\gamma \in R_i$.
    Assume $-\alpha_k \in R_1$, then we can write it as 
    $$-\alpha_k = \underbrace{(\alpha_{k+1} + \dots + \alpha_i)}_{\in \Delta_+^{\ge \alpha_i}} + \underbrace{(-\alpha_k - \dots - \alpha_i)}_{\in \Delta_-^{\ge \alpha_i}}.
    $$
    By commuting the corresponding subalgebras of $W$ we get $a_{-\alpha_{k}} \in \Lambda$.
    Similarly, $a_{-\alpha_k} \in \Lambda$ for any $-\alpha_k \in R_2$.
    Since any negative root $-\gamma \in R_i$ is a sum of $-\alpha_k \in R_i$ we get the desired statement.
\end{proof}

\begin{corollary}
    In case of type $A_n$, a regular subalgebra $W \subseteq \mathfrak{P}_i$ can have at most $n+1$ parameters.
    In other words, $|C| \leq |\Lambda| = n+1$.
    \end{corollary}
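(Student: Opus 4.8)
The plan is to read off the bound directly from \cref{lem:parameters_of_An_simple_roots}, which has already carried out the substantive work. That lemma establishes the containment $C \subseteq \Lambda$, so the corollary reduces entirely to counting the elements of $\Lambda$. First I would recall the definition $\Lambda = \{ a_\alpha \mid \alpha \in \pi \setminus \{\alpha_i\} \} \cup \{ c, d \}$, where $c = c_{\alpha_i}$ and $d = d_{\alpha_i}$.

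Next I would count. In type $A_n$ the set of simple roots $\pi$ has exactly $n$ elements, so $\pi \setminus \{\alpha_i\}$ has $n-1$ elements, each contributing a single constant $a_\alpha$. The special root $\alpha_i$ (the one with $k_i = 1$ selecting the parabolic $\mathfrak{P}_i$) contributes two constants, namely $c$ and $d$, because the corresponding root space sits inside $\fc$ and is of the form $\fg_{\alpha_i}^{c,d}$ rather than $\fg_{\alpha_i}^{a}$. Thus $\Lambda$ is the union of a set of size at most $n-1$ with a set of size at most $2$, giving $|\Lambda| \le (n-1) + 2 = n+1$; counting the defining slots (one per simple root other than $\alpha_i$, together with $c$ and $d$) yields exactly $n+1$ of them. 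Combining with $C \subseteq \Lambda$ produces the desired chain $|C| \le |\Lambda| = n+1$, and since the constants in $C$ are precisely the parameters defining $W$, this bounds the number of parameters by $n+1$.

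The genuinely hard part — showing that every constant attached to a non-simple root in the data of $W$ is forced to coincide with one of the simple-root constants — has already been overcome in \cref{lem:parameters_of_An_simple_roots} through the root-addition and commutator computations. Consequently the corollary requires no new structural input; the only point demanding care is the bookkeeping observation that $\alpha_i$ supplies \emph{two} constants ($c$ and $d$) rather than one, which is exactly what makes the sharp bound $n+1$ instead of $n$. I therefore expect no real obstacle here beyond stating the count cleanly.
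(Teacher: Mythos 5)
Your proposal is correct and is exactly the argument the paper intends: the corollary is an immediate consequence of \cref{lem:parameters_of_An_simple_roots}, combined with the count $|\pi \setminus \{\alpha_i\}| = n-1$ plus the two constants $c, d$ attached to $\alpha_i$. Your remark that $|\Lambda| = n+1$ is really a count of defining slots (the values could coincide) is a fair and harmless clarification.
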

\noindent The upper bound $n+1$ can always be achieved as it is seen from the following examples.

\begin{example}%
\label{ex:type_1_An_middle}
Fix an integer $n \ge 3$ and a simple root $\alpha_i$ with $1 < i < n$. 
To simplify the description, we introduce the following notation:
\begin{equation}%
\label{eq:definition_beta_sum}
\begin{aligned}
  \beta_k &\coloneqq \alpha_1 + \dots + \alpha_{k}, \ 1 \le k \le n, \\
  \overrightarrow{\beta_k} &\coloneqq \alpha_{i+1} + \dots + \alpha_{i+k}, \ 1 \le k \le n - i, \\
  \overleftarrow{\beta_k} &\coloneqq \alpha_{i-1} + \dots + \alpha_{i-k}, \ 1 \le k \le i-1.
\end{aligned}
\end{equation}
For convenience we also set $\beta_0 = \overrightarrow{\beta_0} = \overleftarrow{\beta_0} = 0$.
Both $R_1$ and $R_2$ are subsystems of type $A$; see \cref{fig:An_diagram_middle_point}.
\begin{figure}[H]
    \centering
    \includegraphics[scale = 0.7]{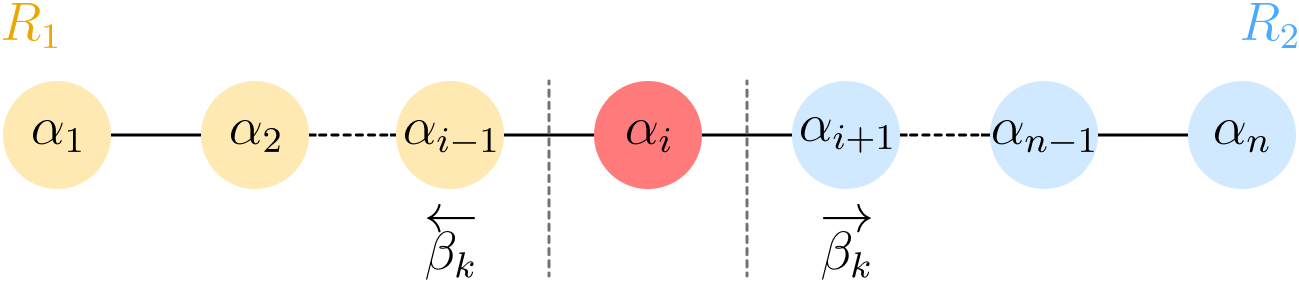}
    \caption{$\overleftarrow{\beta_k}$ and $\overrightarrow{\beta_k}$ sum $k$ simple roots to the left and right from $\alpha_i$ respectively.}
    \label{fig:An_diagram_middle_point}
\end{figure}
\noindent By \cref{thm:root_partitions}, up to equivalences, they have unique finest partitions.
One representative for the finest partition of $R_1$ is
\begin{align*}
  \Delta_1^0 &= \{- \overleftarrow{\beta_k} \mid \ 1 \le k \le i-1 \}, \\
  \Delta_1^{m} &= \{\overleftarrow{\beta_m}, \overleftarrow{\beta_m} - \overleftarrow{\beta_k}  \mid \ 1 \le m \neq k \le i-1 \}.
\end{align*}
Similarly,
\begin{align*}
  \Delta_2^0 &= \{- \overrightarrow{\beta_k} \mid 1 \le k \le n-i \}, \\
  \Delta_2^{j} &= \{\overrightarrow{\beta_j}, \overrightarrow{\beta_j} - \overrightarrow{\beta_k}  \mid 1 \le j \neq k \le n-i \}
\end{align*}
is one of the finest partitions of $R_2$.
Define $H_0 \coloneqq 0$.
For all $0 \le j \le n-i$ and $ 0 \le m \le i-1$ we define the following subalgebras of $\fg$:
\begin{align*} 
    \fg_{1,m} &\coloneqq F H_{\overleftarrow{\beta_m}} \bigoplus_{\alpha \in \Delta_1^{m}} \fg_\alpha, \\
    \fg_{2,j} &\coloneqq F H_{\overrightarrow{\beta_j}} \bigoplus_{\alpha \in \Delta_2^{j}} \fg_\alpha.
\end{align*}
These subalgebras can be now ''glued'' into the following regular subalgebra of $\fg(\!(x)\!)$:
\begin{equation*}
\begin{aligned}
    W = 
    \bigoplus_{m=0}^{i-1} \left( \fg_{1,m}^{a_m} \oplus \fg^{a_m,b_0}_{\overleftarrow{\beta_m} + \alpha_i}
    \right)\bigoplus_{j=0}^{n-i} \left( \fg_{2,j}^{b_j} \oplus \fg_{\alpha_i + \overrightarrow{\beta_j} }^{a_0,b_j}
    \right)\bigoplus_{m=1}^{i-1} \bigoplus_{j=1}^{n-i} \fg_{\overleftarrow{\beta_m} + \alpha_i + \overrightarrow{\beta_j}}^{a_m,b_j}
    \bigoplus_{\alpha \in \Delta_-^{\ge \alpha_i}} \fg_\alpha^0
\end{aligned}
\end{equation*}
where $a_m$ and $b_j$ are distinct elements in $F$.

The corresponding endomorphisms $R$ and $S$ of $\fg$ are given by
\begin{equation}%
\label{eq:endomorphisms_R_S_An_1}
        Rv = \begin{cases}
            -av & (Fv)^{a} \subseteq W, \\
            -(a + b)v & (Fv)^{a,b} \subseteq W, \\
            0 & \text{otherwise}
        \end{cases}
        \ \textnormal{ and } \
        Sv = \begin{cases}
            ab v& (Fv)^{a,b} \subseteq W,\\
            0 & \text{otherwise},
        \end{cases}
\end{equation}
where $v \in \{ H_{\overleftarrow{\beta_m}}, H_{\overrightarrow{\beta_j}}, E_{\pm \alpha} \}$. 
\end{example}

\begin{example}%
\label{ex:type_1_An_edge}
    When $n \ge 2$ and $i=1$ or $n$, we can use the same approach as in \cref{ex:type_1_An_middle}, keeping only $R_2$ or $R_1$ respectively.
    More precisely, when $i=1$ the above construction leads to the following regular subalgebra
    \begin{equation*}
    \begin{aligned}
    W = 
    &\bigoplus_{j=0}^{n-1} \left( \fg_{j}^{a_j} \oplus \fg_{\alpha_i + \overrightarrow{\beta_j} }^{a_j,b}
    \right) \bigoplus_{\alpha \in \Delta_-^{\ge \alpha_i}} \fg^0_\alpha,
\end{aligned}
\end{equation*}
where 
$$
  \Delta_0 = \{- \overrightarrow{\beta_k} \mid 1 \le k \le n-1 \}, \ \
  \Delta_{j} = \{\overrightarrow{\beta_j}, \overrightarrow{\beta_j} - \overrightarrow{\beta_k}  \mid 1 \le j \neq k \le n-1 \}
$$
and
$$
\fg_{j} \coloneqq F H_{\overrightarrow{\beta_j}} \bigoplus_{\alpha \in \Delta_{j}} \fg_\alpha, \ 0 \le j \le n-1
$$
with the convention $H_0 = 0$.

The endomorphisms $R$ and $S$ are given by the same equations \cref{eq:endomorphisms_R_S_An_1}, but with 
$v \in \{ H_{\overrightarrow{\beta}_j}, E_{\pm \alpha} \}$.
\end{example}

\paragraph{Type $B_n$.}
There is only one vertex with degree $1$, namely $\alpha_1$
The remaining simple roots $\{\alpha_2, \dots, \alpha_n \}$ generate a subsystem $R\subseteq \Delta$ of type $B_{n-1}$; see
\cref{fig:Bn_vertex_type_1}.
\begin{figure}[H]
    \centering
    \includegraphics[scale = 0.7]{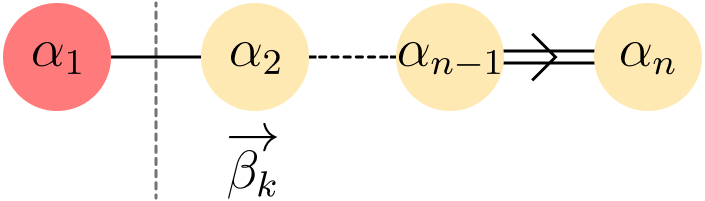}
    \caption{Unique vertex with $k_i = 1$.}
    \label{fig:Bn_vertex_type_1}
\end{figure}
\noindent Similarly to \cref{eq:regular_type_1_An}, any regular subalgebra of type $1$ can then be written in the form 
\begin{equation}%
\label{eq:regular_type_1_Bn}
    W = W_\fh
    \bigoplus_{\alpha \in R}  \fg_\alpha^{a_\alpha}
    \bigoplus_{\alpha \in \Delta_+^{\ge  \alpha_1}} \fg_\alpha^{c_\alpha,d_\alpha}
    \bigoplus_{\alpha \in \Delta_-^{\ge \alpha_1}}\fg_{\alpha}^0.  
\end{equation}
Since there are no regular partitions of $R$ into more than two parts \cite{maximov_regular}, we can immediately conclude that $|\{a_\alpha \mid \alpha \in R \}| \le 2$.
When there are exactly two different constants, call them $a$ and $b$, we have a regular partition $R = R_1 \sqcup R_2$ of the root system $R$.
Note that a system of type $B_n$ has the following property: for any two $\eta, \gamma \in \Delta_+^{\ge \alpha_1}$ we can find two roots $\mu, \nu \in R \sqcup \{ 0\} $ such that
$\eta = (\gamma + \mu) + \nu$.
Consequently, by fixing a pair $\{ c_\gamma, d_\gamma \}$ of constants for an arbitrary $\gamma \in \Delta_+^{\ge \alpha_1}$, we fix all other constants as well.
From this easy observation follows, that we cannot get more than 4 different parameters in \cref{eq:regular_type_1_Bn}.
The next result implies, that actually the number of parameters is at most 2.

\begin{lemma}%
\label{lem:completion_of_roots_Bn}
    There is a root $\gamma \in \Delta_+^{\ge \alpha_1}$ and two roots $\mu, \nu \in \Delta_-^{\ge \alpha_1}$ such that $\gamma + \mu \in R_1$ and $\gamma + \nu \in R_2$.
\end{lemma}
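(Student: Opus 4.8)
The plan is to work in the standard Euclidean model of $B_n$, writing the roots as $\pm e_i \pm e_j$ ($i<j$) and $\pm e_i$, with $\alpha_1 = e_1 - e_2$ the unique simple root of type $1$. In these coordinates $\Delta_+^{\ge \alpha_1}$ consists exactly of the positive roots whose $e_1$-coefficient equals $1$, namely $e_1 - e_j$, $e_1 + e_j$ (for $2 \le j \le n$) and $e_1$; the subsystem $R = \langle \alpha_2,\dots,\alpha_n\rangle$ is the copy of $B_{n-1}$ living on $e_2,\dots,e_n$, and $R = R_1 \sqcup R_2$ with both parts non-empty (these are the roots carrying the two distinct constants $a$ and $b$). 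Since $\gamma + \mu$ has $e_1$-coefficient $0$ whenever $\gamma \in \Delta_+^{\ge\alpha_1}$ and $\mu \in \Delta_-^{\ge\alpha_1}$, every such sum that happens to be a root lies in $R$. Thus the lemma reduces to the purely combinatorial claim: for the given partition $R = R_1 \sqcup R_2$ there is a single $\gamma \in \Delta_+^{\ge\alpha_1}$ whose reachable set $A(\gamma) \coloneqq (\gamma + \Delta_-^{\ge \alpha_1}) \cap \Delta$ meets both $R_1$ and $R_2$; then picking $\beta_1 \in A(\gamma)\cap R_1$ and $\beta_2 \in A(\gamma) \cap R_2$ and writing $\beta_1 = \gamma + \mu$, $\beta_2 = \gamma + \nu$ produces the required roots.

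First I would compute the three reachable sets explicitly. A short direct calculation gives $A(e_1 + e_j) = \{\beta \in R \mid (e_j\text{-coefficient of } \beta) = +1\}$, $A(e_1 - e_j) = \{\beta \in R \mid (e_j\text{-coefficient of }\beta) = -1\}$, and $A(e_1) = \{\pm e_k \mid 2 \le k \le n\}$, i.e.\ the set of all short roots of $R$. In particular every short root of $R$ is reachable from $\gamma = e_1$, and every long root $\beta = \epsilon e_p + \epsilon' e_q$ of $R$ is reachable from $\gamma = e_1 + \epsilon e_p$ together with the short root $\epsilon e_p$, since both $\beta$ and $\epsilon e_p$ have $e_p$-coefficient $\epsilon$.

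With these descriptions the lemma follows from a two-case argument according to how the short roots of $R$ are distributed between $R_1$ and $R_2$. If both parts contain a short root, then $\gamma = e_1$ already works, because $A(e_1)$ is exactly the set of short roots and so meets both parts. Otherwise all short roots lie in one part, say $R_1$; since $R_2$ is non-empty it must then contain a long root $\beta = \epsilon e_p + \epsilon' e_q$, and choosing $\gamma = e_1 + \epsilon e_p$ we get $\beta \in A(\gamma) \cap R_2$ while the short root $\epsilon e_p \in R_1$ also lies in $A(\gamma)$; hence $A(\gamma)$ again meets both parts. In either case the desired $\gamma$, $\mu$, and $\nu$ are produced, and one checks that the resulting differences $\mu = \beta_1 - \gamma$ and $\nu = \beta_2 - \gamma$ (e.g.\ $\mu = -e_1$ in the second case) genuinely lie in $\Delta_-^{\ge\alpha_1}$.

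Note that this argument uses only that $R_1$ and $R_2$ are both non-empty, not that the partition is regular, so the closedness hypothesis plays no role here. The bulk of the work is the explicit bookkeeping of the reachable sets $A(\gamma)$ and the verification that each $\mu,\nu$ really is a negative root of type $\ge \alpha_1$; the only conceptual subtlety is handling low-rank degenerations (for instance $n = 2$, where $R = B_1$ has no long roots and one is automatically in the first case), which the case split absorbs without extra effort.
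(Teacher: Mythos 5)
Your proof is correct, but it takes a genuinely different route from the paper's. The paper argues by contradiction: assuming no such $\gamma$ exists, every ``orbit'' $O_\gamma = (\gamma + \Delta_-^{\ge \alpha_1})\cap\Delta$ must sit inside a single part $R_i$; starting from $\gamma = \beta_n$ this forces $\pm\alpha_n$ into one part, and then a chain through $\beta_{n-1}, \beta_{n-2},\dots$ — using the \emph{closedness} of $R_1$ and $R_2$ at each step — sweeps all of $\pm\alpha_2,\dots,\pm\alpha_n$, hence all of $R$, into that part, contradicting non-triviality of the partition. You instead compute the orbits explicitly in the Euclidean model ($A(e_1)$ is the set of short roots of $R$, $A(e_1 \pm e_j)$ the roots of $R$ with $e_j$-coefficient $\pm1$) and run a direct two-case analysis on how the short roots are distributed; your computations and the verification that the resulting $\mu,\nu$ lie in $\Delta_-^{\ge\alpha_1}$ all check out, and the $n=2$ degeneration is handled. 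Your argument is more elementary and, as you note, proves slightly more, since it needs only that $R_1$ and $R_2$ are non-empty rather than that the partition is regular. What the paper's contradiction-and-chaining scheme buys in exchange is portability: essentially the same orbit-connectivity argument is reused for the $D_n$ and $E_6$, $E_7$ cases (\cref{lem:completion_of_roots_Cn} and \cref{prop:E6_E7_type_1}), where an explicit coordinate enumeration of all reachable sets would be considerably more cumbersome.
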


\begin{proof}
    Assume the statement is false.
    Then the roots  
    $$\beta_n - \beta_{n-1} = \alpha_{n} \  \text{ and } \ \beta_n - (\beta_n + \alpha_n) = - \alpha_n$$ 
    always lie together either in $R_1$ or $R_2$. Here, we used the notation defined in \cref{eq:definition_beta_sum}.
    Without loss of generality let $\pm \alpha_{n} \in R_1$.
    Similarly, all three roots
    \begin{align*}
        -\alpha_n &= \beta_{n-1} - \beta_n, \\
        \alpha_{n-1} &= \beta_{n-1} - \beta_{n-2}, \\
        -\alpha_{n-1} - 2\alpha_n &= \beta_{n-1} - (\beta_n + \alpha_n + \alpha_{n-1})
    \end{align*}
    must be contained in one of the sets $R_1$ or $R_2$. Since $-\alpha_n \in R_1$, the same is true for the other two roots.
    Consequently, $\pm \alpha_{n-1} \in R_1$.
    Continuing in this way, we prove that all $\pm \alpha_i$, $2 \le i \le n$ lie inside $R_1$. This contradicts the fact that we started with a $2$-regular partition $R = R_1 \sqcup R_2$.
\end{proof}

Taking 
$\gamma \in  \Delta_+^{\ge \alpha_1}$, 
satisfying the condition of 
\cref{lem:completion_of_roots_Bn}, 
we get the inclusions 
$$\{ c_\eta, d_\eta \mid \eta \in \Delta_+^{\ge \alpha_1} \} \subseteq \{ c_\gamma, d_\gamma \}\cup \{a,b \} \subseteq \{ a,b\}.$$
Note that none of the constants can be $0$, because this would imply the type of $W$ being $0$.

In case $\{a_\alpha \mid \alpha \in R \} = \{ a \neq 0\}$, we know that $a \in \{c_\gamma, d_\gamma \}$.
If $c_\gamma = a$, then the remaining constant $d_\gamma \neq 0$ can be chosen arbitrarily.
Summarizing everything above, we get the following statement.

\begin{proposition}%
\label{prop:Bn_type_1}
    Let $\fg$ be of type $B_n$, $n \ge 2$.
    Then any regular subalgebra $W \subseteq \mathfrak{P}_1$ is necessarily of the form
    presented in \cref{ex:type_1_from_2_regular}, where $\Delta^{< \alpha_1}$ is a root system of type $B_{n-1}$ and $\Delta^{< \alpha_1} = \Delta_1 \sqcup \Delta_2$ is its $2$-regular partition.
\end{proposition}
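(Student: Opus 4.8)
The plan is to begin from the normal form \cref{eq:regular_type_1_Bn}, valid for every type-$1$ regular $W \subseteq \mathfrak{P}_1$, and to determine all of the free constants $a_\alpha$ and $\{c_\alpha,d_\alpha\}$ purely by imposing closure of $W$ under the bracket. By \cref{lemm:regular_subalgebra_standard_form} this closure is equivalent to the polynomial relations \cref{eq:commutators_regular_W_0} among the series $f_\alpha$, so the whole argument becomes root-combinatorial: whenever two root subspaces of $W$ bracket into a third, comparing the two sides of \cref{eq:commutators_regular_W_0} as monic polynomials in $x^{-1}$ forces an inclusion between the associated sets of constants, and I would chase these inclusions through the root system $B_n$.

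First I would record that the ambient system $R = \Delta^{<\alpha_1}$ is of type $B_{n-1}$, which by \cite{maximov_regular} admits no regular partition into three or more parts; hence $\alpha \mapsto a_\alpha$ takes at most two distinct values on $R$, and grouping roots by their value exhibits a $2$-regular partition $R = R_1 \sqcup R_2$ with constants $a$ and $b$, one part possibly empty. Next, using the structural feature of $B_n$ that any $\eta,\gamma \in \Delta_+^{\ge \alpha_1}$ satisfy $\eta = (\gamma+\mu)+\nu$ for suitable $\mu,\nu \in R \sqcup \{0\}$, I would bracket $\fg_\gamma^{c_\gamma,d_\gamma}$ successively against the relevant $R$-subspaces and read off from \cref{eq:commutators_regular_W_0} that the unordered pair $\{c_\eta,d_\eta\}$ does not depend on $\eta$; call it $\{c,d\}$. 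At this stage $W$ is governed by only the four constants $a,b,c,d$.

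The heart of the argument, and the step I expect to be the main obstacle, is to prove $\{c,d\}=\{a,b\}$. Here I would invoke \cref{lem:completion_of_roots_Bn} to pick $\gamma \in \Delta_+^{\ge \alpha_1}$ together with $\mu,\nu \in \Delta_-^{\ge \alpha_1}$ satisfying $\gamma+\mu \in R_1$ and $\gamma+\nu \in R_2$. Bracketing $\fg_\gamma^{c,d}$ against $\fg_\mu^0$ and against $\fg_\nu^0$ and using $f_\mu = f_\nu = x^{-1}$, $f_{\gamma+\mu} = x^{-1}-a$, $f_{\gamma+\nu}=x^{-1}-b$, relation \cref{eq:commutators_regular_W_0} collapses to the monic quadratic identities $(x^{-1}-c)(x^{-1}-d) = (x^{-1}+c_{\gamma\mu})(x^{-1}-a)$ and $(x^{-1}-c)(x^{-1}-d)=(x^{-1}+c_{\gamma\nu})(x^{-1}-b)$, which force $a \in \{c,d\}$ and $b \in \{c,d\}$. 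Since $\{c,d\}$ has at most two elements, this yields $\{c,d\}=\{a,b\}$ as soon as $a \neq b$. The delicate point is that one must simultaneously realize $a$ and $b$ as roots of the \emph{same} quadratic $(x^{-1}-c)(x^{-1}-d)$ by descending from a single $\gamma$ into both $R_1$ and $R_2$; this compatibility is exactly what \cref{lem:completion_of_roots_Bn} is built to supply.

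Finally I would clear up the remaining points. No constant may vanish: an $f_\alpha = x^{-1}$ on $\Delta_\pm^{<\alpha_1}$ or a zero among $c,d$ would drop the type of $W$ to $0$, so $a,b,c,d \in F^\times$. If $R$ carries a single value $a$ (equivalently $R_2 = \emptyset$, $a=b$), the computation above still gives $a \in \{c,d\}$ with the second constant $b \coloneqq d$ free and nonzero, and $W$ is then the \cref{ex:type_1_from_2_regular} subalgebra for the trivial partition $\Delta_1 = R$, $\Delta_2 = \emptyset$. In every case the Cartan part $W_\fh$ is, by \cref{lemm:regular_subalgebra_standard_form}, an $F[x^{-1}]$-invariant subspace of $\fh(\!(x)\!)$ of the shape $W_\phi$ for some linear map $\phi \colon \fh \to \fh$, and specializing the conditions of \cref{lem:R_S_relations_type_i} to the $\fh$-valued brackets $N_R(E_\alpha,E_{-\alpha})$ recovers precisely the compatibility relations on $\phi$ demanded in \cref{ex:type_1_from_2_regular}. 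Assembling these facts presents $W$ in the asserted form, with $(\Delta_1,\Delta_2)=(R_1,R_2)$ and $(a_1,a_2)=(a,b)$.
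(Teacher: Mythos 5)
Your proposal is correct and follows essentially the same route as the paper: the normal form \cref{eq:regular_type_1_Bn}, the bound $|\{a_\alpha\}|\le 2$ from the absence of $3$-regular partitions of $B_{n-1}$, the propagation of the pair $\{c_\gamma,d_\gamma\}$ via $\eta=(\gamma+\mu)+\nu$, and the use of \cref{lem:completion_of_roots_Bn} to force $\{c,d\}\subseteq\{a,b\}$ are exactly the paper's steps. The handling of the degenerate single-constant case and the nonvanishing of the parameters also matches the paper's argument.
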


\paragraph{Type $C_n$.}
As one can see from \cref{fig:Cn_type_1}, removing the vertex of degree one from the Dynkin diagram of type $C_n$ leads to a subdiagram of type $A_{n-1}$.
\begin{figure}[H]
    \centering
    \includegraphics[scale = 0.7]{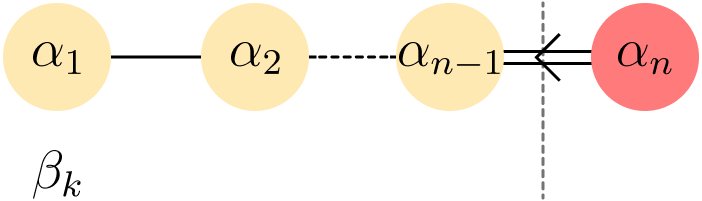}
    \caption{Subdiagram $A_{n-1}$ inside $C_n$. Here $\beta_k$ is the sum of $k$ consecutive simple roots starting from $\alpha_1$}
    \label{fig:Cn_type_1}
\end{figure}
\noindent 
This suggests that we can expect more than just $2$ parameters.
The following example shows that we can obtain a regular subalgebra $W \subseteq \mathfrak{P}_n$ with $n$ parameters.

\begin{example}%
\label{ex:Cn_decomposition}
    Start with one of the finest regular decompositions of $A_{n-1}$, namely
    \begin{align*}
        \Delta_0 \coloneqq \{ \beta_{i} \mid 1 \le i \le n-1 \} \ \text{ and } \ 
        \Delta_{j} \coloneqq \{ -\beta_j, -\beta_j + \beta_i \mid 1 \le i \neq j \le n-1 \},
    \end{align*}
    where $\beta_k \coloneqq \alpha_1 + \dots + \alpha_k$ and $\beta_0 = 0$.
    As before, we put $H_0 = 0$ and for $0 \le j \le n-1$ define
    $$
      \fg_{j} \coloneqq F H_{\beta_j} \bigoplus_{\alpha \in \Delta_{j}} \fg_\alpha.
    $$
    We can extend this regular splitting of $\mathfrak{sl}(n, F)$ to a regular subalgebra $W \subseteq \mathfrak{P}_n$ in the following way
    \begin{equation*}
        W \coloneqq 
        \bigoplus_{i=0}^{n-1} \left( \fg_i^{a_i} 
        \oplus \fg_{\beta_n - \beta_i}^{a_i, a_{n-1}} \right) \bigoplus_{0 \le i \le j < n-1} \fg_{\beta_n + \beta_{n-1} - \beta_j - \beta_i}^{a_i, a_j} 
        \bigoplus_{\alpha \in \Delta_-^{\ge \alpha_n}} \fg_{\alpha}^0
    \end{equation*}
    where $a_i \in F$ are arbitrary constants, such that at least one of the pairs $(a_i, a_{n-1}), (a_i, a_j)$ or $(a_0, a_0)$ has both non-zero entries.
    The corresponding endomorphisms $R$ and $S$ are given using the same formula \cref{eq:endomorphisms_R_S_An_1} with $v \in \{H_{\beta_j}, E_{\pm \alpha} \mid 1 \le j \le n, \alpha \in \Delta \}$.
\end{example}

\paragraph{Type $D_n$.}
When $\fg$ is of type $D_n$, we have three different roots of degree $1$ and, essentially, two different cases to consider; see \cref{fig:Dn_splitting_d_1}.
Removing $\alpha_1$ in case $n \ge 5$ leads to a subsystem of type $D_{n-1}$.
On the other hand, removing $\alpha_1$ when $n = 4$ and removing $\alpha_{n-1}$ or $\alpha_n$ gives a subdiagram of type $A_{n-1}$.
\begin{figure}[H]
    \centering
    \includegraphics[scale = 0.7]{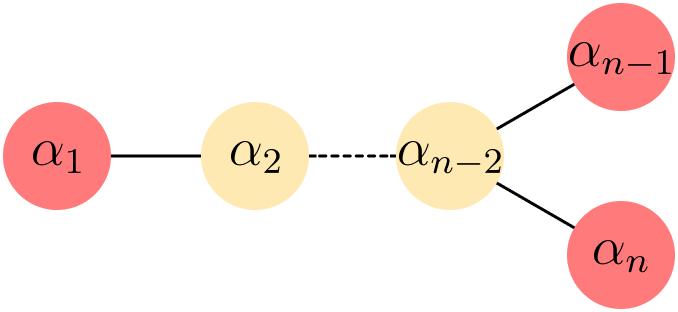}
    \caption{Roots of degree 1 in $D_n$}
    \label{fig:Dn_splitting_d_1}
\end{figure}
\noindent 
Consequently, in the first case we anticipate a rigid system with only two parameters, while in the second case, we can expect to find a regular subalgebra with many parameters.

Let us first consider the case $i=1$ and $n \ge 5$.
It is similar to $B_n$-type case. 
We define $R$ to be the subsystem of $\Delta$ generated by $\{\alpha_2, \dots, \alpha_n \}.$ 
It is a subsystem of type $D_{n-1}$ and hence it has no regular partitions into more than 2 parts.
Therefore, for any regular subalgebra $W \subseteq \mathfrak{P}_1$ we can find two constants $a,b \in F$ and a regular partition $R = R_1 \sqcup R_2$ such that
\begin{equation*}
    W = W_\fh \bigoplus_{\alpha \in R_1} \fg_\alpha^a \bigoplus_{\alpha \in R_2} \fg_\alpha^b 
    \bigoplus_{\alpha \in \Delta_+^{\ge \alpha_1}} \fg_\alpha^{c_\alpha, d_\alpha} 
    \bigoplus_{\alpha \in \Delta_-^{\ge \alpha_1}} \fg_\alpha^0
\end{equation*}
Assuming that $R_1$ and $R_2$ are non-trivial, we can again prove a similar result.
\begin{lemma}%
\label{lem:completion_of_roots_Cn}
    There is a root $\gamma \in \Delta_+^{\ge \alpha_1}$ and two roots $\mu, \nu \in \Delta_-^{\ge \alpha_1}$ such that $\gamma + \mu \in R_1$ and $\gamma + \nu \in R_2$.
\end{lemma}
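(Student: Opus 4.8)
The plan is to argue by contradiction, using the specific shape of $\Delta_\pm^{\ge \alpha_1}$ in type $D_n$ and the fact that (in the standing context $i=1$, $n\ge 5$) the subsystem $R$ has type $D_{n-1}$. Realize $\Delta$ inside $\bigoplus_{i=1}^n Fe_i$ so that $\alpha_1 = e_1 - e_2$ and $\alpha_2,\dots,\alpha_n$ generate $R = \{\pm e_j \pm e_k \mid 2 \le j < k \le n\}$. Since $k_1 = 1$, every positive root has $\alpha_1$-coefficient at most one, so $\Delta_+^{\ge \alpha_1} = \{e_1 \pm e_j \mid 2 \le j \le n\}$ and $\Delta_-^{\ge \alpha_1} = \{-e_1 \pm e_k \mid 2 \le k \le n\}$. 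The first step is a direct computation: for a fixed $\gamma = e_1 + \delta e_j \in \Delta_+^{\ge \alpha_1}$ with $\delta \in \{+1,-1\}$ and $\mu = -e_1 + \delta' e_k \in \Delta_-^{\ge \alpha_1}$, the $e_1$-terms cancel and $\gamma + \mu = \delta e_j + \delta' e_k$, which lies in $R$ precisely when $k \neq j$. As $\mu$ varies, these sums sweep out exactly the \emph{star} $\mathrm{St}(\delta e_j) \coloneqq \{\delta e_j + \delta' e_k \mid \delta' \in \{\pm 1\}, \ k \neq j, \ 2 \le k \le n\}$ of all roots of $R$ through the half-coordinate $\delta e_j$, and nothing outside $R$ (the cases $k=j$ produce $0$ or the non-roots $\pm 2e_j$). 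Hence the assertion of the lemma is equivalent to the existence of a single half-coordinate whose star meets both $R_1$ and $R_2$.

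Next I would assume the lemma fails and extract a colouring. Under the failure hypothesis every star $\mathrm{St}(\delta e_j)$ lies entirely in $R_1$ or entirely in $R_2$, so each of the $2(n-1)$ half-coordinates $\delta e_j$ ($\delta \in \{\pm 1\}$, $2 \le j \le n$) may be coloured by the part containing its star. The decisive type-$D$ input enters here: for any two \emph{distinct} indices $j \neq k$ and any signs, $\delta e_j + \delta' e_k$ is a root of $R$, and this root belongs simultaneously to $\mathrm{St}(\delta e_j)$ and $\mathrm{St}(\delta' e_k)$. Since $R_1 \cap R_2 = \varnothing$, the two half-coordinates cannot receive different colours; thus any two half-coordinates with distinct indices share a colour. (This is precisely where the argument exploits the $D$-type structure of $R$—the absence of short or doubled roots—rather than the inductive chain used in the $B_n$ case of \cref{lem:completion_of_roots_Bn}.)

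Finally I would invoke connectivity. Because $n \ge 5$ gives at least $n-1 \ge 4$ indices in $R$, the compatibility graph on the half-coordinates—with an edge whenever the two indices differ—is connected: the only non-adjacent pairs are $+e_j$ and $-e_j$, and these are linked through any third index $e_k$. The forced constancy of the colouring across edges therefore makes it globally constant, pushing every star, and hence all of $R$, into one part. This contradicts the standing assumption that both $R_1$ and $R_2$ are non-trivial, which proves the lemma. I expect the only delicate point to be the bookkeeping in the first step—verifying that $\mu \mapsto \gamma + \mu$ surjects onto the full star of $\delta e_j$ and never leaves $R$—after which the colouring and connectivity arguments are essentially immediate.
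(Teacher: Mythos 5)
Your proof is correct and follows essentially the same strategy as the paper's: assuming the lemma fails, each orbit $O_\gamma$ (your star $\mathrm{St}(\delta e_j)$) lies entirely in one part, and pairwise intersections of orbits give a connected compatibility graph that forces all of $R$ into a single part, contradicting the non-triviality of $R_1$ and $R_2$. The only difference is presentational: by working in the standard coordinate realization of $D_n$ you make explicit, via direct computation, the orbit description and connectivity claim that the paper delegates to a figure.
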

\begin{proof}
    Assume the opposite.
    Then for any fixed $\gamma \in \Delta_+^{\ge \alpha_1}$ the set
    $$
      O_\gamma \coloneqq \{\gamma + \mu \in \Delta \mid \mu \in \Delta_{-}^{\ge \alpha_1}  \} \subseteq R,
    $$
    called the orbit of $\gamma$, must be contained entirely in one of the sets $R_1$ or $R_2$.
    If two orbits have a non-zero intersection $O_\gamma \cap O_\eta \neq \emptyset$, then both $O_\gamma$ and
    $O_\eta$ are contained in the same $R_i$.
    We say that two orbits $O_\gamma$ and
    $O_\eta$ are connected by a root $\beta \in R$, if there are $\mu, \nu \in \Delta_-^{\ge \alpha_1}$ such that $\gamma + \mu = \eta + \nu$.
    Connected orbits, in particular, are contained in the same closed set $R_1$ or $R_2$.
    Consider the following graph
    \begin{enumerate}
        \item vertices are the roots in $\Delta_+^{\ge \alpha_1}$;
        \item there is an edge between $\gamma$ and $\eta$ if there is a simple root $\alpha_i \in R$ such that $\alpha_i$ or $-\alpha_i$ is contained in $O_\gamma \cap O_\eta$. 
        We label such an edge with $\alpha_i$ or $-\alpha_i$ respectively.
    \end{enumerate}
    It is not hard to see that in our particular case the graph has the form presented in \cref{fig:intersection_of_orbits_Dn}.
    \begin{figure}[H]
    \centering
    \includegraphics[scale = 0.7]
    {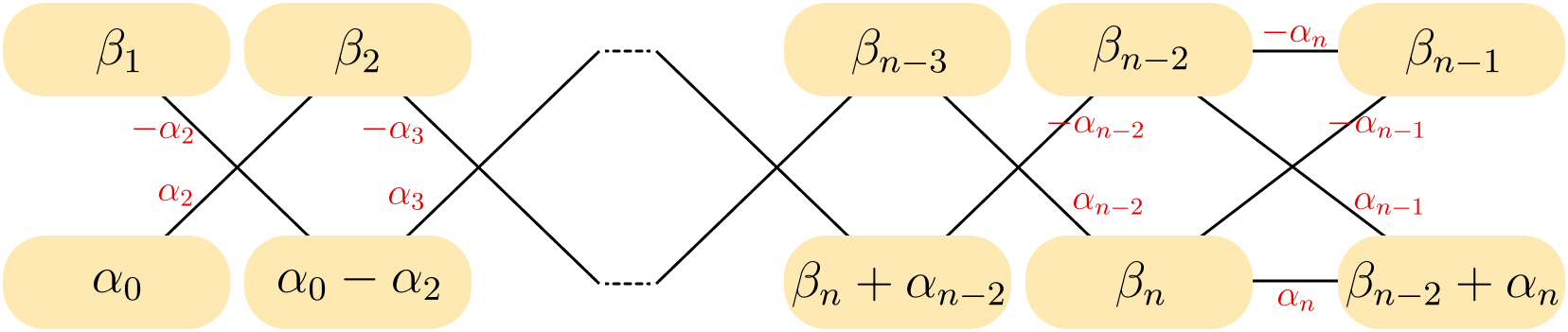}
    \caption{Orbits related by simple roots.}
    \label{fig:intersection_of_orbits_Dn}
\end{figure}
\noindent Consequently, all orbits are connected and they contain the set $\{\pm \alpha_2, \dots, \pm \alpha_n \}$.
This means that either $R \subseteq R_1$ or $R \subseteq R_2$ contradicting our assumption that $R_1$ and $R_2$ were non-trivial.
\end{proof}
Note that, as in the case of $B_n$, any two roots $\gamma, \eta \in \Delta_{+}^{\ge \alpha_1}$ can be connected by a chain of roots $\mu_1, \dots, \mu_\ell \in R$, i.e.\ we can write
$\eta = \gamma + \mu_1 + \dots + \mu_\ell$.
Repeating the argument proceeding \cref{prop:Bn_type_1} we obtain the following statement.
\begin{proposition}%
\label{prop:Dn_type_1}
    Let $\fg$ be of type $D_n$, $n \ge 5$.
    Then any regular subalgebra $W \subseteq \mathfrak{P}_1$ is necessarily of the form
    given in \cref{ex:type_1_from_2_regular}.
    The system $\Delta^{< \alpha_1}$ is a system of type $D_{n-1}$ and 
    $\Delta^{< \alpha_1} = \Delta_1 \sqcup \Delta_2$ is its regular partition.
\end{proposition}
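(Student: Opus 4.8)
The plan is to mirror, step for step, the argument that precedes \cref{prop:Bn_type_1}, the only new ingredient being that the orbit combinatorics of $B_{n-1}$ is replaced by the $D_{n-1}$ combinatorics packaged in \cref{lem:completion_of_roots_Cn}. First I would record the reduced shape of $W$, which is already in place: since $R = \langle \alpha_2,\dots,\alpha_n\rangle$ is of type $D_{n-1}$ and, by \cite{maximov_regular}, admits no regular partition into more than two parts, \cref{lemm:regular_subalgebra_standard_form} forces $W$ into the displayed form governed by a regular $2$-partition $R = R_1 \sqcup R_2$, two constants $a$ on $R_1$ and $b$ on $R_2$, a pair $\{c_\alpha,d_\alpha\}$ for each $\alpha \in \Delta_+^{\ge \alpha_1}$, and the constant $0$ on $\Delta_-^{\ge \alpha_1}$. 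The target is to prove that \emph{every} pair $\{c_\alpha,d_\alpha\}$ equals $\{a,b\}$; once this is done, $W$ is literally the subalgebra of \cref{ex:type_1_from_2_regular} with $\Delta_1 = R_1$, $\Delta_2 = R_2$, $a_1 = a$ and $a_2 = b$.

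The next step is to extract the local constraints from the relations \cref{eq:commutators_regular_W_0}. Commuting $\fg_\gamma^{c_\gamma,d_\gamma}$ with $\fg_\mu^0$ for $\gamma \in \Delta_+^{\ge \alpha_1}$ and $\mu \in \Delta_-^{\ge \alpha_1}$ with $\gamma + \mu \in R$ forces, after cancelling the common factor of $x$, the constant of the root $\gamma+\mu$ to be a root of $(x^{-1}-c_\gamma)(x^{-1}-d_\gamma)$; that is, it lies in $\{c_\gamma,d_\gamma\}$. Consequently, if the orbit $O_\gamma = \{\gamma + \mu \in \Delta \mid \mu \in \Delta_-^{\ge \alpha_1}\}$ meets both $R_1$ and $R_2$, then $\{a,b\} \subseteq \{c_\gamma,d_\gamma\}$, whence $\{c_\gamma,d_\gamma\} = \{a,b\}$ since a pair cannot contain three distinct values. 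To anchor one such pair I would invoke \cref{lem:completion_of_roots_Cn}, which produces a root $\gamma_0 \in \Delta_+^{\ge \alpha_1}$ whose orbit meets both parts, giving $\{c_{\gamma_0},d_{\gamma_0}\} = \{a,b\}$.

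It then remains to propagate this pair to all of $\Delta_+^{\ge \alpha_1}$. Here I would use that any $\eta \in \Delta_+^{\ge \alpha_1}$ is reachable from $\gamma_0$ by a chain $\eta = \gamma_0 + \mu_1 + \dots + \mu_\ell$ with $\mu_j \in R$, which is exactly the connectivity of the orbit graph established inside the proof of \cref{lem:completion_of_roost_Cn}; commuting across each $\fg_{\mu_j}$ and using \cref{eq:commutators_regular_W_0} shows $\{c_\eta,d_\eta\} \subseteq \{a,b\}$ at every stage. The degenerate case in which one part, say $R_2$, is empty is handled in parallel: the same commutators give $a \in \{c_\gamma,d_\gamma\}$ with the second constant common across all $\eta$, and $W$ again matches \cref{ex:type_1_from_2_regular} with $\Delta_2 = \emptyset$. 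I expect the main obstacle to be precisely the \emph{uniformity} at the end — ruling out degenerate pairs such as $\{a,a\}$ or $\{b,b\}$ on some $\eta$ — because a single commutation only pins down one of the two entries of a pair; this is where the $D_n$-specific orbit structure (connectivity together with the fact that, for a genuine $2$-partition, the relevant orbits must see both $R_1$ and $R_2$, as in \cref{lem:completion_of_roots_Cn} and its proof) and the exclusion of type $0$ (forcing all constants to be nonzero) must be combined, and where the hypothesis $n \ge 5$ is used to guarantee that $R$ is of type $D_{n-1}$ rather than $A$, so that no $m$-partition with $m \ge 3$ can intervene.
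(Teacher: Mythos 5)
Your proposal is correct and follows essentially the same route as the paper: reduce to a $2$-regular partition of the type-$D_{n-1}$ subsystem $R$ (using that it admits no $m$-regular partition with $m\ge 3$), anchor one pair $\{c_{\gamma_0},d_{\gamma_0}\}=\{a,b\}$ via \cref{lem:completion_of_roots_Cn}, and propagate to all of $\Delta_+^{\ge\alpha_1}$ along chains using the connectivity of the orbit graph. The uniformity issue you flag at the end (ruling out degenerate pairs such as $\{a,a\}$ on some $\eta$) is likewise left implicit in the paper, which simply instructs the reader to repeat the argument preceding \cref{prop:Bn_type_1}.
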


Now we turn our attention to the case $i = n$.
As it was mentioned before, the subsystem $R \subseteq \Delta$ generated by $\{\alpha_1, \dots, \alpha_{n-1} \}$ has type $A_{n-1}$.
We now present a construction of a regular subalgebra $W \subseteq \mathfrak{P}_n$ with $n$ parameters.

\begin{example}
We start exactly as in \cref{ex:Cn_decomposition} with the finest partition of $R$:
\begin{align*}
        \Delta_0 \coloneqq \{ \beta_{i} \mid 1 \le i \le n-1 \} \ \text{ and } \ 
        \Delta_{j} \coloneqq \{ -\beta_j, -\beta_j + \beta_i \mid 1 \le i \neq j \le n-1 \},
\end{align*}
where $\beta_k \coloneqq \alpha_1 + \dots + \alpha_k$ and $\beta_0 = 0$.
We put $H_0 = 0$ and define
    $$
      \fg_{j} \coloneqq F H_{\beta_j} \bigoplus_{\alpha \in \Delta_{j}} \fg_\alpha
    $$
for each $0 \le j \le n-1$.
Such a decomposition can be glued to a regular subalgebra of $\mathfrak{P}_n$ in the following way:
\begin{equation*}
    W \coloneqq \bigoplus_{i=0}^{n-1} \fg_i^{a_i} 
    \bigoplus_{i=0}^{n-3} \fg_{\beta_n - \beta_i}^{a_i, a_{n-2}}
    \bigoplus_{i=0}^{n-2} \fg_{\beta_n - \alpha_{n-1} - \beta_i}^{a_i, a_{n-1}}
    \bigoplus_{0 \le i < j < n-3} \fg_{\beta_n + \beta_{n-2} - \beta_i - \beta_j}^{a_i, a_j}
    \bigoplus_{\alpha \in \Delta_-^{\ge \alpha_n}} \fg_{\alpha}^{0},
\end{equation*}
where $a_i \in F$.
\end{example}

The endomorphisms $R$ and $S$ are obtained using \cref{eq:endomorphisms_R_S_An_1} with the restriction $v \in \{H_{\beta_j}, E_{\pm \alpha} \mid 1 \le j \le n, \alpha \in \Delta \}$.

\paragraph{Type $E_6, E_7$.}
There are only two vertices of degree $1$ in $E_6$. Removing any of them leads to a subsystem of type $D_5$; see \cref{fig:E6_E7_type_1}.
\begin{figure}[H]
    \centering
    \includegraphics[scale = 0.7]{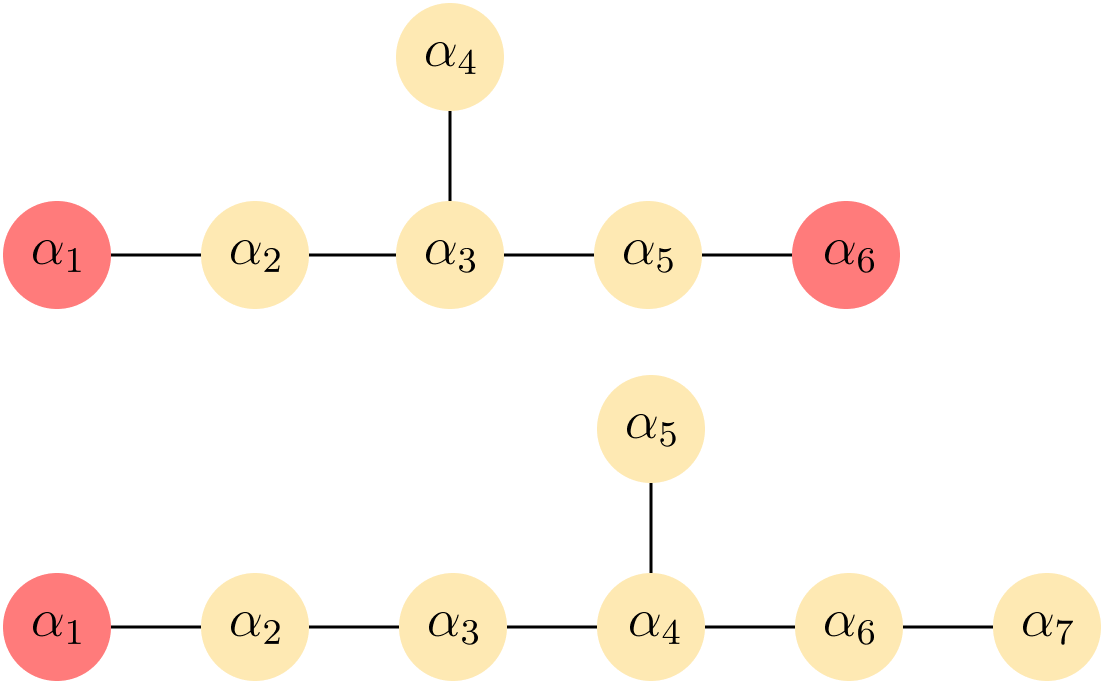}
    \caption{Vertices of degree 1 in $E_6$ and $E_7$.}
    \label{fig:E6_E7_type_1}
\end{figure}
\noindent Furthermore, removing the only node of degree 1 from $E_7$ gives rise to a subsystem of type $E_6$.
In both cases, we can repeat the arguments for $D_n$ and prove the following. 

\begin{proposition}%
\label{prop:E6_E7_type_1}
    Let $\fg$ be of type $E_6$ or $E_7$.
    Then any regular subalgebra $W \subseteq \mathfrak{P}_1$ (and $\mathfrak{P}_6$ in $E_6$ case) is necessarily of the form
    given in \cref{ex:type_1_from_2_regular}.
\end{proposition}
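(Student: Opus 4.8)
The plan is to repeat verbatim the argument used for $D_n$ in \cref{prop:Dn_type_1}, the decisive point being that deleting a degree-one node with $k_i=1$ again leaves a subsystem admitting no regular partition into three or more parts. For $E_6$ the admissible nodes are $\alpha_1$ and $\alpha_6$, and removing either produces $R=\Delta^{<\alpha_i}$ of type $D_5$; for $E_7$ the unique admissible node $\alpha_1$ yields $R$ of type $E_6$ (see \cref{fig:E6_E7_type_1}). In either case $R$ is not of type $A_m$ with $m\ge 2$, so \cref{thm:root_partitions} forbids $m$-regular partitions of $R$ with $m\ge 3$. Writing $W$ in the standard form analogous to \cref{eq:regular_type_1_Bn} with subsystem $R$, and recalling that grouping the roots of $R$ by their constant produces a regular partition, this already forces $\{a_\alpha\mid \alpha\in R\}$ to take at most two distinct values $a,b$, i.e.\ a (possibly trivial) $2$-regular partition $R=R_1\sqcup R_2$.

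The first and only genuinely new step is the analogue of \cref{lem:completion_of_roots_Cn}: if $R_1,R_2$ are both non-trivial, there exist $\gamma\in\Delta_+^{\ge\alpha_i}$ and $\mu,\nu\in\Delta_-^{\ge\alpha_i}$ with $\gamma+\mu\in R_1$ and $\gamma+\nu\in R_2$. I would prove this by contradiction using the same orbit graph: its vertices are the roots $\gamma\in\Delta_+^{\ge\alpha_i}$, its orbits are $O_\gamma=\{\gamma+\mu\in\Delta\mid\mu\in\Delta_-^{\ge\alpha_i}\}\subseteq R$, and two orbits are joined by an edge labelled $\pm\alpha_k$ whenever $\pm\alpha_k\in O_\gamma\cap O_\eta$ for a simple root $\alpha_k$ of $R$. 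Were no such $\gamma$ to exist, each $O_\gamma$ would lie entirely in $R_1$ or entirely in $R_2$, and orbits sharing a root would lie in the same part; it would then remain only to check that the orbit graph is connected and that its orbits jointly contain every simple root $\pm\alpha_k$ with $k\ne i$. This would force $R\subseteq R_1$ or $R\subseteq R_2$, contradicting non-triviality.

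Granting the completion lemma, I would conclude exactly as in the paragraph preceding \cref{prop:Bn_type_1}. Commuting $\fg_\gamma^{c_\gamma,d_\gamma}$ with the component $\fg_\mu^{0}\subseteq W$ lands in $\fg_{\gamma+\mu}^{a}\subseteq W$, and matching the polynomial factors shows that $(x^{-1}-c_\gamma)(x^{-1}-d_\gamma)$ must be divisible by $(x^{-1}-a)$, hence $a\in\{c_\gamma,d_\gamma\}$; likewise the lowering into $R_2$ forces $b\in\{c_\gamma,d_\gamma\}$, and since $a\ne b$ we obtain $\{c_\gamma,d_\gamma\}=\{a,b\}$. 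Because any two roots of $\Delta_+^{\ge\alpha_i}$ in $E_6$ or $E_7$ are joined by a chain of roots of $R$, the same pair $\{a,b\}$ propagates to every $\eta\in\Delta_+^{\ge\alpha_i}$, so all central roots carry the constants $(a,b)$. Combined with the constraints on the Cartan part $W_\fh=W_\phi$ imposed by \cref{lem:R_S_relations_type_i}, this is precisely the shape recorded in \cref{ex:type_1_from_2_regular}; the case of a single constant on $R$ is the specialization with $R_2=\emptyset$, where the second central constant is left free.

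The main obstacle is entirely combinatorial: verifying, for the two exceptional embeddings $D_5\subset E_6$ and $E_6\subset E_7$, that the orbit graph is connected and that its orbits sweep out all simple roots of $R$. For $D_n$ this was read off the explicit picture in \cref{fig:intersection_of_orbits_Dn}; here I would draw the corresponding orbit diagrams and check connectivity and covering by inspection. For $E_6$ the diagram automorphism interchanging $\alpha_1$ and $\alpha_6$ handles $\mathfrak{P}_1$ and $\mathfrak{P}_6$ simultaneously, so only the two diagrams $D_5\subset E_6$ and $E_6\subset E_7$ actually need to be examined.
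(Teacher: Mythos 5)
Your proposal is correct and follows essentially the same route as the paper: the paper's proof is precisely ``repeat the $D_n$ argument,'' with the accompanying remark noting that the only new work is the (more cumbersome, but hand-checkable via Hasse diagrams) verification that all orbits intersect and jointly contain $\{\pm\alpha_1,\dots,\pm\alpha_n\}\setminus\{\pm\alpha_i\}$ for the embeddings $D_5\subset E_6$ and $E_6\subset E_7$. You have correctly isolated this as the only genuinely new step and handled the remaining bookkeeping (at most two constants on $R$, propagation of $\{a,b\}$ along chains, and the degenerate single-constant case) exactly as the paper does.
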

\begin{remark}
    The only difference between proofs for \cref{prop:Dn_type_1,prop:E6_E7_type_1} is that in the latter case we need a more cumbersome calculation to show that all orbits intersect and contain all the roots $\{\pm \alpha_1, \dots, \pm \alpha_n \} \setminus \{\pm \alpha_i \}$.
    However, this can still be done effectively by hand, using the corresponding Hasse diagrams.
\end{remark} 

\subsection{Regular decompositions \(\fg(\!(x)\!) = \fg[\![x]\!] \oplus W\) of type \(> 1\)}%
\label{subsec:reg_0_type_2}
When the degree $k_i$ of a simple root $\alpha_i$ is greater than $1$, we can still use the same two building blocks
\cref{ex:type_1_An_middle,ex:type_1_An_edge}
to construct examples of regular subalgebras $W \subseteq \mathfrak{P}_i$.
The difference lies only in the ``gluing'' complexity. 
We consider $\fg$ of type $B_n$ as an example.

\begin{example}%
\label{ex:B_n_type_2}
    Let $\fg$ be a simple Lie algebra of type $B_n$, $n \ge 3$, and fix some $1 < i < n$.
    By \cref{lemm:regular_subalgebra_standard_form} any regular subalgebra $W \subseteq \mathfrak{P}_i$ is of the form
    \begin{equation*}
    \begin{aligned}
        W =
        W_\fh 
        \bigoplus_{\alpha \in R} \fg_\alpha^{a_\alpha} 
        \bigoplus_{\alpha \in \Delta_+^{\ge k_i \alpha_i}} \fg_{\alpha}^{c_\alpha, d_\alpha}
        \bigoplus_{\alpha \in \Delta_-^{\ge  \alpha_i}} \fg_\alpha^0,
    \end{aligned}
    \end{equation*}
    where $R = \Delta_+^{< k_i \alpha_i} \sqcup \Delta_-^{< \alpha_i}$.
    Note that $R$ is not a subsystem in general, because it may not be closed under root addition.
    However, $R$ does contain \emph{sets of type} $A_n$ and $B_{n-i+1}$; see \cref{fig:Bn_type_2}.
    \begin{figure}[H]
    \centering
    \includegraphics[scale = 0.7]{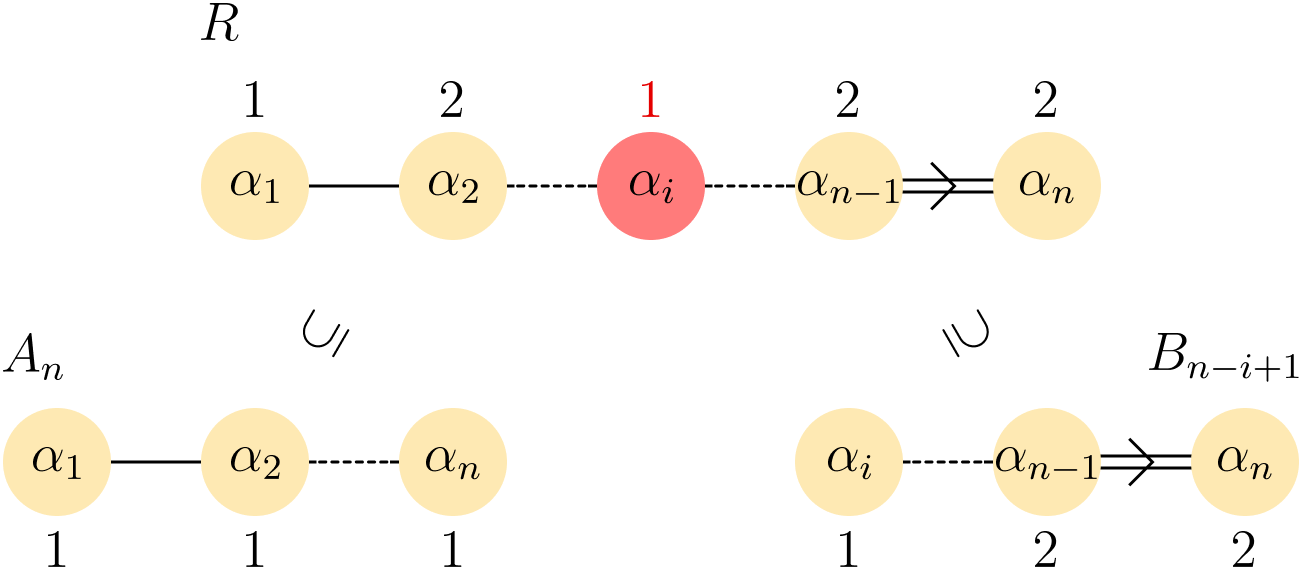}
    \caption{Sets of type $A_n$ and $B_{n-i+1}$ inside $R$.}
    \label{fig:Bn_type_2}
\end{figure}
\noindent By that we mean that $R_1 \coloneqq \{ \beta_k - \beta_j \mid 0 \le k \neq j \le n\}$ is a subset of $R$ and for any two roots $\mu, \nu \in R_1$ we have $\mu + \nu \in R_1$ if and only if $\mu + \nu$ is a root in $A_n$.
Here we again used the notation \cref{eq:definition_beta_sum}.
Similarly, the set 
$$
R_2 \coloneqq \{\beta_k - \beta_j, \pm ( 2\beta_n - \beta_\ell - \beta_p) \mid i \le k \neq j \le n, \ i-1 \le p < \ell \le n-1 \}
$$
has the property $\mu + \nu \in R_2$ for $\mu, \nu \in R_2$ if and only if $\mu + \nu$ is a root of $B_{n-i+1}$.
A regular subalgebra $W$ must induce regular partitions of the subsets $R_1$ and $R_2$ respectively. 
In building an example, we take the opposite path: we decompose the subsets $R_1$ and $R_2$ and then ``glue'' them into a regular subalgebra $W \subseteq \mathfrak{P}_i$.

Start with a finest partition of $R_1$:
\begin{align*}
        \Delta_0' \coloneqq \{\beta_{k} \mid 1 \le k \le n \} \ \text{ and } \ 
        \Delta_{j}' \coloneqq \{ -\beta_j, -\beta_j + \beta_k \mid 1 \le k \neq j \le n \}.
\end{align*}
Now we extend these sets with the remaining roots of $R$ in the following way
\begin{align*}
        \Delta_0 &\coloneqq \Delta_0' \sqcup \{ 2\beta_n - \beta_k \mid i \le k \le n-1 \} \\
        \Delta_j &\coloneqq \Delta_j' \sqcup \{
        2\beta_n - \beta_j - \beta_k \mid i < k \le n-1
        \}, \ 1 \le j \le i-1,
\end{align*}
and
$$
  \Delta_i \coloneqq (\Delta_+^{\ge 2\alpha_n}\cap \Delta_+^{< \alpha_i}) \sqcup(\Delta_-^{\ge 2 \alpha_n}\cap \Delta_-^{< 2\alpha_i})\bigsqcup_{j=i}^{n} \Delta_j'.
$$
In this way, we get a partition $R = \bigsqcup_{k=0}^i \Delta_k$ with the property, that all the roots of $R_2$ lie within two subsets $\Delta_{i-1}$ and $\Delta_i$.
We now add the Cartan part of $\fg$:
\begin{align*}
        \fg_0 &\coloneqq FH_{\beta_n} \bigoplus_{\alpha \in \Delta_0}\fg_\alpha, \\
        \fg_j &\coloneqq FH_{\beta_n-\beta_j} \bigoplus_{\alpha \in \Delta_j} \fg_\alpha, \\
        \fg_i &\coloneqq \bigoplus_{k=i}^{n-1} FH_{\beta_n-\beta_k} \bigoplus_{\alpha \in \Delta_i} \fg_\alpha
\end{align*}
and extend everything to a regular subalgebra $W \subseteq \mathfrak{P}_i$
\begin{equation*}
    W \coloneqq \bigoplus_{k=0}^{i-1} \fg_k^{a_k}
    \bigoplus_{0 \le \ell < m \le i-1} \fg_{2 \beta_n - \beta_m - \beta_\ell}^{a_m, a_\ell}
    \bigoplus \fg_i^0  
    \bigoplus_{\alpha \in \Delta_-^{\ge 2\alpha_i}} \fg_\alpha^0.
\end{equation*}
with $i$ parameters.
The corresponding endomorphisms $R$ and $S$ are given by \cref{eq:endomorphisms_R_S_An_1} with $v \in \{H_{\beta_n - \beta_k}, E_{\pm \alpha} \mid 0 \le k \le n-1, \ \alpha \in \Delta \}$.
\end{example}

Therefore, a general approach to constructing regular subalgebras of type $> 1$ is:
\begin{enumerate}
    \item Find a subset $R_1 \subseteq R$ of type $A_n$ and consider its finest partition $R_1 = \sqcup_{i=0}^{n} \Delta_i'$ (see \cref{thm:root_partitions});
    \item Complete sets $\Delta_i'$ with the remaining roots of $R\setminus R_1$ in such a way that any subset $R_2 \subseteq R$ of a type different from $A_n$ is contained in at most two elements of the partition;
    \item Add Cartan part and extend the resulting decomposition of $\fg$ to a regular subalgebra $W$.
\end{enumerate}

\section{Regular decompositions \(L_m = \mathfrak{D} \oplus W\) for \(m > 0\)}%
\label{sec:regular_1_2}
In view of \cref{rem:restriction_on_k}, to study regular subalgebras $W \subseteq L_m$ it is enough to consider cases  \(m \le 2\).
Lie algebra $L_0$ was considered in the previous section. 
Now we turn our attention to $m=1$ and $m=2$.

It turns out, that regular $W  \subseteq L_1$ of type $0$ admit a complete classification. 
Similar to \cref{prop:W_from_regular_splittings}, the classification is obtained by reducing the problem to decompositions of $\fg \times \fg$.

\subsection{Regular decompositions \(\fg \times \fg = \mathfrak{d} \oplus \mathfrak{w}\)}%
\label{sec:reg_constant_decomp}
We say that $\mathfrak{w} \subseteq \fg \times \fg$
is regular if it is invariant under $\{ (h, h) \mid h \in \fh \}$ and $\mathfrak{w} \oplus \mathfrak{d} = \fg \times \fg $, where $\mathfrak{d} = \{ (a,a) \mid a\in \fg\}$.
Regular subalgebras are completely described in the following proposition.

\begin{theorem}%
\label{prop:gxg_splittings}
Let $\mathfrak{w} \subseteq \fg \times \fg$ be a regular subalgebra.
Then there is a $2$-regular partition $\Delta = S_+\sqcup S_-$ and subspaces of the Cartan subalgebra $\fs_\pm = \ft_\pm \oplus \fr_\pm$, having the properties
$\fh = \fs_+ + \fs_-$ and $\ft_+ \cap \ft_- = \{ 0 \}$, such that
\begin{equation}\label{eq:w_gxg_const} 
\begin{split}
        \mathfrak{w} = \left((\ft_+ \bigoplus_{\alpha \in S_+} \fg_\alpha) \times \{0\}\right) 
        \oplus 
        \left(\{ 0 \} \times (\ft_- \bigoplus_{\beta \in S_-} \fg_\beta) \right)
        \oplus 
        \textnormal{span}_F \{ (h, \phi(h)) \mid h \in \fr_+ \},
        \end{split}
    \end{equation}
    where $\phi \colon \fr_+ \to \fr_-$ is a vector space isomorphism with no non-zero fixed points.
    In other words, all regular subalgebras are described by $2$-regular partitions of $\Delta$ and the extra datum $(\ft_\pm, \fr_\pm, \phi)$.
\end{theorem}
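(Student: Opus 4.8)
The plan is to use the $\fh$-invariance to put $\mathfrak{w}$ into weight-graded form, pin down each graded piece from the complementarity to $\mathfrak{d}$, and then read off the combinatorial datum $S_\pm$ and the Cartan datum $(\ft_\pm,\fr_\pm,\phi)$.

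First I would observe that the commuting semisimple operators $\{\ad(h,h)\mid h\in\fh\}$ diagonalize $\fg\times\fg$ with joint eigenspaces $\fh\times\fh$ (weight $0$) and $\fg_\alpha\times\fg_\alpha$ (weight $\alpha$) for $\alpha\in\Delta$. Since $\mathfrak{w}$ and $\mathfrak{d}$ are both invariant, each is the direct sum of its intersections with these weight spaces, and the complementarity $\mathfrak{w}\oplus\mathfrak{d}=\fg\times\fg$ holds weight by weight. As $\mathfrak{d}\cap(\fg_\alpha\times\fg_\alpha)$ is a line and $\mathfrak{d}\cap(\fh\times\fh)$ has dimension $\dim\fh$, this forces $\ell_\alpha:=\mathfrak{w}\cap(\fg_\alpha\times\fg_\alpha)$ to be a line complementary to the diagonal of $\fg_\alpha\times\fg_\alpha$, and $\mathfrak{w}_\fh:=\mathfrak{w}\cap(\fh\times\fh)$ to be a subspace of dimension $\dim\fh$ complementary to $\{(h,h)\}$. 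In particular the difference map $(h_1,h_2)\mapsto h_1-h_2$ restricts to an isomorphism $\mathfrak{w}_\fh\xrightarrow{\sim}\fh$, which I will use repeatedly.

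The crux is the dichotomy that for every $\alpha\in\Delta$ either $\ell_\alpha=\fg_\alpha\times\{0\}$ or $\ell_\alpha=\{0\}\times\fg_\alpha$. Writing $\ell_\alpha=F(E_\alpha,\mu_\alpha E_\alpha)$ with slope $\mu_\alpha\in F\cup\{\infty\}$ (complementarity excludes $\mu_\alpha=1$), I would suppose $\mu_\alpha\in F^\times$ and compute, for $(h_1,h_2)\in\mathfrak{w}_\fh$,
\[
[(h_1,h_2),(E_\alpha,\mu_\alpha E_\alpha)] = (\alpha(h_1)E_\alpha,\ \mu_\alpha \alpha(h_2) E_\alpha).
\]
Since $\mathfrak{w}$ is a subalgebra this element lies in $\ell_\alpha$, which (as $\mu_\alpha\neq 0$) forces $\alpha(h_1)=\alpha(h_2)$, i.e. $\alpha(h_1-h_2)=0$. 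But $h_1-h_2$ runs over all of $\fh$ by the previous paragraph, so $\alpha=0$, a contradiction. Hence only the slopes $0$ and $\infty$ occur, and I set $S_+:=\{\alpha\mid \ell_\alpha=\fg_\alpha\times\{0\}\}$ and $S_-:=\{\alpha\mid \ell_\alpha=\{0\}\times\fg_\alpha\}$. Closedness of $S_+$ (and symmetrically $S_-$) is immediate from $[\fg_\alpha\times\{0\},\fg_\beta\times\{0\}]=\fg_{\alpha+\beta}\times\{0\}\subseteq\mathfrak{w}$, so $\Delta=S_+\sqcup S_-$ is a $2$-regular partition.

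It remains to describe $\mathfrak{w}_\fh$, which is pure linear algebra in the Cartan subalgebra. Once the slopes lie in $\{0,\infty\}$, the inclusions $[\mathfrak{w}_\fh,\ell_\alpha]\subseteq\ell_\alpha$ hold automatically, and the brackets $[\ell_\alpha,\ell_{-\alpha}]$ simply record the compatibility of the partition with the Cartan part (e.g. $(H_\alpha,0)\in\mathfrak{w}_\fh$ for $\pm\alpha\in S_+$). I would set $\ft_+:=\{h\mid(h,0)\in\mathfrak{w}_\fh\}$ and $\ft_-:=\{h\mid(0,h)\in\mathfrak{w}_\fh\}$; then $\ft_+\cap\ft_-=\{0\}$ follows at once from $\mathfrak{w}_\fh\cap\mathfrak{d}=\{0\}$. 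Letting $\fs_\pm$ be the two component projections of $\mathfrak{w}_\fh$, a Goursat-type argument shows that the kernel of the first projection restricted to $\mathfrak{w}_\fh$ is $\{0\}\times\ft_-$ and that $\mathfrak{w}_\fh$ induces an isomorphism $\fs_+/\ft_+\cong\fs_-/\ft_-$; choosing complements $\fr_\pm$ with $\fs_\pm=\ft_\pm\oplus\fr_\pm$ and lifting turns this into a linear isomorphism $\phi\colon\fr_+\to\fr_-$ with $(h,\phi(h))\in\mathfrak{w}_\fh$, and a dimension count yields the claimed direct-sum form of $\mathfrak{w}_\fh$. Finally $\fh=\fs_++\fs_-$ follows from surjectivity of the difference map, and $\phi$ has no nonzero fixed point because $(h,\phi(h))=(h,h)$ would lie in $\mathfrak{w}_\fh\cap\mathfrak{d}=\{0\}$. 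The only genuinely delicate step is the slope dichotomy of the third paragraph, where the complementarity of $\mathfrak{w}_\fh$ to the diagonal is essential; everything afterwards is bookkeeping, with the degenerate cases $S_+=\emptyset$ or $S_-=\emptyset$ (equivalently $\fr_\pm=\{0\}$) covered by the same formulas.
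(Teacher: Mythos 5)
Your proof is correct, and while the overall skeleton (exploit $\fh$-invariance, reduce the root part to a partition, handle the Cartan part by a Goursat argument) matches the paper, your treatment of the central step is genuinely different. The paper works with the projections $\mathfrak{w}_\pm$ and the ideals $I_\pm = \mathfrak{w}\cap(\fg\times\{0\})$, $\mathfrak{w}\cap(\{0\}\times\fg)$, sets up the quotient isomorphism $\mathfrak{w}_+/I_+\cong\mathfrak{w}_-/I_-$ globally, deduces $R_\pm=S_\pm\setminus S_\mp$ by set-theoretic bookkeeping, and then eliminates the possibility of a ``mixed'' line $F(E_\gamma,\lambda_\gamma E_\gamma)$ with $\lambda_\gamma\neq 0,\infty$ by a case analysis on whether $-\gamma$ also lies in $S_+\cap S_-$, using brackets of the root vectors $E_{\pm\gamma}$ themselves. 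You instead decompose $\mathfrak{w}$ weight-by-weight from the outset and kill all intermediate slopes in one stroke by bracketing $\ell_\alpha$ against $\mathfrak{w}_\fh$ and invoking the surjectivity of the difference map $(h_1,h_2)\mapsto h_1-h_2$ on $\mathfrak{w}_\fh$ — a consequence of $\mathfrak{w}_\fh\cap\mathfrak{d}=\{0\}$ plus a dimension count. This buys you a uniform argument with no case distinction on $-\gamma$ and no need for the quotient isomorphism on the root-space part (Goursat survives only where it is really needed, on the Cartan part); the paper's route, on the other hand, makes the intermediate combinatorics of $S_\pm$ and $R_\pm$ explicit, which it reuses almost verbatim in the proofs of the $L_1$ and $L_2$ reductions later on. Both arguments are complete; the only point worth flagging is the degenerate case $S_-=\emptyset$ (or $S_+=\emptyset$), where $\Delta=S_+\sqcup S_-$ is not literally a $2$-regular partition in the sense of the paper's definition — but you note this, and the paper is equally silent about it.
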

\begin{proof}
Consider a regular subalgebra $\mathfrak{w} \subseteq \fg \times \fg$.
Write $p_\pm$ for the canonical projections of $\mathfrak{w}$ onto the first and second components of $\fg \times \fg$ respectively.
Define the following spaces:
$$
\mathfrak{w}_\pm \coloneqq p_\pm (\mathfrak{w}), \ \
I_{+} \coloneqq \mathfrak{w} \cap (\fg \times \{ 0 \}) \ \text{ and } \ I_- \coloneqq \mathfrak{w} \cap (\{ 0\}\times \fg).
$$
It is clear that $I_\pm$ is an ideal in $\mathfrak{w}_\pm$. 
This allows to define an isomorphism $\varphi$ of Lie algebras:
\begin{equation}%
\label{eq:isomorphism_quotionts}
\begin{aligned}
    \varphi \colon &\mathfrak{w}_+ / I_+ \to \mathfrak{w}_- / I_-  , \\
    &[a_+] \mapsto [a_-] \coloneqq [(p_- \circ p^{-1}_+)(a_+)].
\end{aligned}
\end{equation}
\noindent
The invariance of $\mathfrak{w}$ under the adjoint action of $\{ (h,h) \mid h \in \fh \}$ implies the invariance of $\mathfrak{w}_\pm$ under the adjoint action of $\fh$.
Indeed, applying projections $p_\pm$ to the element
$$
[(h,h), (a_+, a_-)] = ([h,a_+], [h, a_-]) \in W
$$
we see that $[h, a_\pm] \in \mathfrak{w}_\pm$ for all $h \in \fh$ and all $(a_+, a_-) \in W$, giving the desired invariance.
Moreover, since $[\fh, I_\pm] \subseteq I_\pm$, the isomorphism $\varphi$ is an $\fh$-module isomorphism.
Consequently, we can write
$$
    \mathfrak{w}_\pm = \fs_\pm \bigoplus_{\alpha \in S_\pm} \fg_\alpha \ \text{ and } \ 
    I_\pm = \ft_\pm \bigoplus_{\alpha \in R_\pm} \fg_\alpha
$$
for some subspaces $\ft_\pm \subseteq \fs_\pm \subseteq \fh$ and subsets $R_\pm \subseteq S_\pm \subseteq \Delta$.
Note that since $I_+ \cap I_- = \{ 0\}$ we have $\ft_+ \cap \ft_- = \{0 \}$ and $R_+ \cap R_- = \emptyset$.
The isomorphism $\varphi$ then means 
$$
\mathfrak{w}_+ / I_+ \cong \fs_+ / \ft_+ \bigoplus_{\alpha \in S_+ \setminus R_+} \fg_\alpha \ \cong \ \fs_- / \ft_- \bigoplus_{\alpha \in S_- \setminus R_-} \fg_\alpha \cong \mathfrak{w}_- / I_-.
$$
Since $\varphi$ intertwines the $\fh$-action, it must act as multiplication by a non-zero scalar on each non-zero root space and hence $S_+ \setminus R_+ = S_- \setminus R_- \subseteq S_+ \cap S_-$.
Furthermore, since $\mathfrak{w}_+ + \mathfrak{w}_- = \fg$ we get 
$$
\Delta = S_+ \cup S_- = R_+ \cup (S_+ \setminus R_+) \cup R_- \cup (S_- \setminus R_-) = R_+ \cup R_- \cup (S_+ \setminus R_+).
$$
Assume $S_+ \setminus R_+ \neq S_+ \cap S_-$, meaning that there is a root $\alpha \in R_+ \cap (S_+ \cap S_-)$ and $$\alpha \not\in S_+ \setminus R_+ = S_- \setminus R_-.$$
Then we must have the containment $\alpha \in R_-$, leading to a contradiction $R_+ \cap R_- \neq \emptyset$.
Therefore, $S_+ \setminus R_+ = S_+ \cap S_-$ and $R_\pm = S_\pm \setminus S_\mp$.

Let us write $\fs_\pm = \ft_\pm \oplus \fr_\pm$ for some vector subspaces $\mathfrak{r}_\pm \subseteq \fs_\pm$.
Combining the results above, we can write 
\begin{align*}
\mathfrak{w} = &\left((\ft_+ \bigoplus_{\alpha \in \Delta_1} \fg_\alpha) \times \{0\}\right) 
        \oplus 
        \left(\{ 0 \} \times (\ft_- \bigoplus_{\beta \in \Delta_2} \fg_\beta) \right)
        \bigoplus_{\gamma \in S_+ \cap S_-} \textnormal{span}_F \{  
(E_\gamma, \lambda_{\gamma} E_\gamma)
\} \\[1em] 
&\oplus 
\textnormal{span}_F \{ (h_1, h_2) \in \fr_+ \times \fr_- \mid \phi([h_1]) = [h_2] \},
\end{align*}
where we put $\Delta_1 \coloneqq S_+ \setminus S_-$ and $\Delta_2 \coloneqq S_- \setminus S_+$.
To complete the proof we need to show that the intersection $S_+ \cap S_-$ is empty.
For that assume the opposite and take $\gamma \in S_+ \cap S_-$.
Then either $-\gamma \in S_+ \cap S_-$ or $-\gamma \not \in S_+ \cap S_-$.
In the first case the following elements must be contained in $\mathfrak{w}$: 
\begin{align*}
    &(E_{\pm \gamma}, \lambda_{\pm \gamma} E_{\pm \gamma}), \ \ 
    (H_\gamma, \lambda_\gamma \lambda_{-\gamma} H_\gamma), \\
    &(E_\gamma, \lambda^2_\gamma \lambda_{-\gamma} E_\gamma ), \ \
    ( E_{-\gamma}, \lambda_\gamma \lambda^2_{-\gamma}  E_{-\gamma} ).
\end{align*}
This is possible if and only if $\lambda_\gamma \lambda_{-\gamma} = 1$.
However, this would imply $(H_\gamma, H_\gamma) \in \mathfrak{w} \cap \mathfrak{d}$, which is a contradiction.
Therefore, we can without loss of generality assume $-\gamma \in S_+ \setminus S_-$.
Then, by taking appropriate commutators inside $\mathfrak{w}$, we see that the following elements are also in $\mathfrak{w}$:
\begin{align*}
    &(E_\gamma, \lambda_\gamma E_\gamma), \ \
    (E_{-\gamma}, 0), \\
    &(H_{\gamma}, 0), \ \ (\gamma(H_\gamma) E_\gamma, 0),
\end{align*}
giving rise to the contradiction $\gamma \in S_+ \setminus S_-$.
We can now conclude that such a $\gamma$ does not exist implying $S_+ \cap S_- = \emptyset$.

The isomorphism $\varphi$ cannot have fixed points, because otherwise we get a non-trivial intersection of $\mathfrak{w}$ with the diagonal. 
Let $\phi \colon \fr_+ \to \fr_-$ be the isomorphism that maps $h_1 \in \fr_+$ to a representative of \(\varphi([h_1]) \in \fs_-/\ft_-\) inside $\fr_-$.
This completes the proof.
\end{proof}

\begin{example}
    One straight-forward example of a regular subalgebra of $\fg \times \fg$ is
    $$
    \mathfrak{w} = (\fn_+ \times \{0\}) \oplus (\{ 0 \} \times \fn_-) \oplus \{ (h, \phi(h)) \mid h \in \fh \},
    $$
    where $\phi$ is defined on simple roots by $\phi(H_{\alpha_i}) = \lambda_{\alpha_i} H_{\alpha_i}$ for $1 \neq \lambda_{\alpha_i} \in F^\times $.
\end{example}

\begin{example}
    The example above can be easily generalized to an arbitrary $2$-regular partition.
    More precisely, if 
    $\Delta = \Delta_1 \sqcup \Delta_2$, then
    we can define
    $$
    \fg_i \coloneqq \textnormal{span}_F \{E_\alpha, H_\beta \mid \alpha \in \Delta_i, \beta \in \Delta_i \cap (-\Delta_i) \}.
    $$
    It is not hard to see that $\fh \not\subset \fg_1 \oplus \fg_2$.
    In other words, there is always a missing Cartan part $\fh'$.
    This allows to define
    $$
    \mathfrak{w} = (\fg_1 \times \{0\}) \oplus (\{0\} \times \fg_2) \oplus \{ (h, \phi(h)) \mid h \in \fh' \},
    $$
    where $\phi$ is any linear isomorphism without non-zero fixed points.
\end{example}

\begin{remark}
Let us make a connection of the proposition above to the GCYBE similar to \cref{rem:h_invariant_constant_gcybe}.
It is not hard to see that any subspace $\mathfrak{w} \subseteq \fg \times \fg$ complementary to $\mathfrak{d}$
can be written in the form
$$
\mathfrak{w} = \{ (Ra + a, Ra - a) \mid a \in \fg \},
$$
for some unique endomorphism $R \in \textnormal{End}_{F}(\fg)$.
The property of $\mathfrak{w}$ being a subalgebra is equivalent to the relation
\begin{equation}%
\label{eq:gcybe_for_R}
    [Ra, Rb] - R([Ra,b] + [a, Rb]) = -[a,b] \ \text{ for all } a,b \in \fg.
\end{equation}
The Killing form $\kf$ on $\fg$ allows us to identify $\fg \ot \fg$ with $\textnormal{End}_{F}(\fg)$, by sending
$x \ot y$ to $\kf(-, y)x$.
Under this identification, the \cref{eq:gcybe_for_R} becomes
\begin{equation}%
\label{eq:mgcybe_for_r}
    [r^{12},r^{13}] + [r^{12},r^{23}] + [r^{32},r^{13}] = -[\Omega^{12}, \Omega^{13}],
\end{equation}
where $\Omega \in \fg \ot \fg$ is the quadratic Casimir element of $\fg$.

By \cref{prop:gxg_splittings}, all $\fh$-invariant solutions to \cref{eq:mgcybe_for_r} are classified by $2$-regular partitions and additional datum $(S_\pm, \ft_\pm, \fr_\pm, \phi)$. 
Explicitly, consider the tensor  
\begin{equation*}
    \begin{split}
        \widetilde{r} &= \sum_{\alpha \in S_+} (E_\alpha,0) \otimes (E_{-\alpha},E_{-\alpha}) - \sum_{\alpha \in S_-}(0,E_{\alpha}) \otimes (E_{-\alpha},E_{-\alpha}) \\&+ \sum_{i = 1}^n (\phi(h_i) + h_i,\phi(h_i)-h_i) \otimes (h_i,h_i)
    \end{split}
\end{equation*}
where \(\{h_i\}_{i = 1}^n\subseteq \fh\) is an orthonormal basis and \(\phi \colon \fh \to \fh\) is the unique extension of \(\phi\) satisfying \(\phi(\ft_+ \oplus \ft_-) = \{0\}\). 
The element $\widetilde{r}$  
corresponds to the map \((a,a) \mapsto (Ra+a,Ra-a)\) and hence the solution of \cref{eq:mgcybe_for_r} associated with $R$ is given by 
\begin{equation}%
\label{eq:rmatrix_from_constant_gxg}
    r_{(S_\pm, \ft_\pm, \fr_\pm, \phi)} \coloneqq \frac{1}{2}(r_+ + r_-) = \frac{1}{2}\left(\sum_{\alpha \in S_+}E_\alpha \otimes E_{-\alpha} - \sum_{\alpha \in S_-}E_\alpha \otimes E_{-\alpha}\right) + \sum_{i = 1}^n \phi(h_i) \otimes h_i,    
\end{equation}
where \(p_\pm \colon \fg \times \fg \to \fg\), \((a_+,a_-) \mapsto a_\pm\) are the canonical projections and
\(r_\pm \coloneqq (p_\pm \otimes p_\pm)\widetilde{r}\).
\end{remark}
 
\subsection{Regular decomposition \(L_m = \mathfrak{D} \oplus W\) with \(m > 0\) of type \(0\)}%
\label{subsec:L_1_reduction_to_gxg}
Let us start with a regular decomposition
$$
L_2= \fg(\!(x)\!) \times \fg[x]/x^2\fg[x] = \mathfrak{D} \oplus W.
$$
By definition, the projection $W_+$ of $W$ onto the left component is contained in a maximal order $\mathfrak{P}$; see \cref{eq:max_order_i_roots}.
When \(W\) is of type 0, we have $W_+ \subseteq \mathfrak{P}_0 = \fg[x^{-1}]$ and, consequently, 
$$
0 \times x \fg \subseteq W.
$$
Therefore, we can quotient $W$ by the ideal
$0 \times x \fg$ and reduce the problem to a regular decomposition of $L_1$ of type 0.

Consider now a regular subalgebra $W \subseteq L_1$ of type $0$.
This means that $W \subseteq \fg[x^{-1}] \times \fg$.
Intersecting it with $\fg \times \fg$ we get a regular subalgebra $\mathfrak{w} \subseteq \fg \times \fg$.
The latter subalgebras were classified in \cref{prop:gxg_splittings}.
The following result shows that we can also extend regular subalgebras of $\fg \times \fg$ back to type $0$ subalgebras of $L_1$.

\begin{theorem}%
\label{prop:form_W_L_1}
    Let \(\fg(\!(x)\!) \times \fg = \mathfrak{D} \oplus W\) be a regular decomposition such that \(W \subseteq \fg[x^{-1}] \times \fg\), i.e.\ $W$ has type $0$. 
    Then
    \begin{equation*}
        W = \mathfrak{w} \oplus \left(\left(W_{\psi}
        \bigoplus_{i = 0}^n\bigoplus_{\alpha \in \Delta_i}\fg_\alpha^{a_i}\right)\times \{0\}\right),
    \end{equation*}
    where
    \begin{itemize}
        \item \(\mathfrak{w} \subseteq \fg \times \fg\) is given by \cref{eq:w_gxg_const} with a 2-splitting \(\Delta = S_+ \sqcup S_-\) and the additional datum \((\ft_\pm,\fr_\pm,\phi)\);

        \item \(\Delta = \sqcup_{i = 0}^n \Delta_i\) is a regular decomposition 
        such that \(S_+ \subseteq \Delta_0\) and
        $S_+ + \Delta_i$ are closed;
        
        \item \(a_0,\dots,a_n \in F\) are distinct constants with \(a_0 = 0\);

        \item \(\psi \colon \fh \to \fh\) satisfies 
        \(\psi(\fr_+ \oplus \ft_+) = \{0\}\) and 
        \begin{equation*}
            \begin{cases}
                (\psi-a_i)H_\alpha = 0& \alpha \in S_+,-\alpha \in \Delta_i,\\
                (\psi-a_i)(\psi-a_j)H_\alpha = 0 & \alpha \in \Delta_i,-\alpha \in \Delta_j.
            \end{cases}
        \end{equation*}
    \end{itemize}
    Moreover, the above datum always defines a regular subalgebra \(W \subseteq L_1\) of type 0.
    
    The \(r\)-matrix of \(W\) has the form 
    \begin{equation*}
        r(x,y) = \frac{y\Omega}{x-y} + r_{(S_\pm,\ft_\pm,\fr_\pm,\phi)} +\frac{1}{2}\Omega+\left(\frac{\psi}{y\psi - 1} \otimes 1\right)\Omega_\fh + \sum_{i = 1}^n \frac{a_i\Omega_i}{a_iy-1},
    \end{equation*}
    where \(\Omega_i = \sum_{\alpha \in \Delta_i}E_\alpha \otimes E_{-\alpha}\) and \(r_{(S_\pm,\ft_\pm,\fr_\pm,\phi)}\) is given by formula \cref{eq:rmatrix_from_constant_gxg}.
\end{theorem}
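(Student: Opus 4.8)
The plan is to peel off the two pieces of data separately: the constant part governing $\fg \times \fg$ and the first-component data governing the type-$0$ behaviour in $\fg(\!(x)\!)$, and then recombine their generating series.

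First I would set $\mathfrak{w} \coloneqq W \cap (\fg \times \fg)$ and show that $\fg \times \fg = \mathfrak{d} \oplus \mathfrak{w}$ is a regular decomposition. The inclusion $\mathfrak{d} \cap \mathfrak{w} \subseteq \mathfrak{D} \cap W = \{0\}$ is immediate; for complementarity, decompose $(a,b) \in \fg \times \fg$ as $(f,f(0)) + w$ with $f \in \fg[\![x]\!]$ and $w \in W \subseteq \fg[x^{-1}] \times \fg$. Then $a - f \in \fg[\![x]\!] \cap \fg[x^{-1}] = \fg$ forces $f$ to be constant, so $w \in \fg \times \fg$, i.e.\ $w \in \mathfrak{w}$. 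The $\fh$-invariance of $\mathfrak{w}$ is inherited from that of $W$, so \cref{prop:gxg_splittings} produces the $2$-regular partition $\Delta = S_+ \sqcup S_-$ and the datum $(\ft_\pm,\fr_\pm,\phi)$, together with the ideals $I_\pm = \mathfrak{w} \cap (\fg \times \{0\})$, $\mathfrak{w} \cap (\{0\} \times \fg)$.

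Next I would use the $\fh$-invariance of $W$ to write $W = W_\fh \oplus \bigoplus_{\alpha \in \Delta} W_\alpha$, where $W_\alpha = W \cap (\fg_\alpha[x^{-1}] \times \fg_\alpha)$ and $W_\fh = W \cap (\fh[x^{-1}] \times \fh)$, and classify each weight piece. Each $W_\alpha$ is an $F[x^{-1}]$-submodule (via multiplication by $(x^{-1},0)$, which annihilates the second factor) of $F[x^{-1}]E_\alpha \times F E_\alpha$, and complementarity to $\mathfrak{D}_\alpha = \{(fE_\alpha, f(0)E_\alpha)\}$ together with upper-boundedness pins it down; the decisive distinction is whether the second-factor projection of $W_\alpha$ vanishes, which is exactly where $I_\pm$ enter. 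For $\alpha \in S_+$, the element $(E_\alpha,0) \in I_+$ together with $F[x^{-1}]$-invariance forces $\fg_\alpha[x^{-1}] \times \{0\} \subseteq W$, consistent with the normal form $\fg_\alpha^{a_i}$ only when $a_i = 0$; this yields $S_+ \subseteq \Delta_0$ with $a_0 = 0$. For the remaining roots the first components reproduce the pattern $(x^{-1}-a_i)\fg_\alpha[x^{-1}]$ of \cref{lemm:regular_subalgebra_standard_form}, and grouping roots by their constant produces the regular partition $\Delta = \bigsqcup_{i=0}^n \Delta_i$. I would then write $W_\fh = W_\psi$ as in \cref{eq:W_phi_construction}: the constraint $\psi(\fr_+ \oplus \ft_+) = \{0\}$ records the overlap with the Cartan part of $\mathfrak{w}$, while the two displayed relations on $\psi$ are the Cartan shadow of the closedness conditions, obtained by computing $[(E_\alpha,\cdot),(E_{-\alpha},\cdot)]$ inside $W$ and reading off the $H_\alpha$-coefficient, exactly as in the proof of \cref{prop:W_from_regular_splittings}. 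The closedness of $\Delta_i$ and of $S_+ + \Delta_i$ then follows from demanding that brackets of the corresponding root pieces remain in $W$, the latter recording that the full pieces $\fg_\alpha[x^{-1}] \times \{0\}$ with $\alpha \in S_+$ commute correctly against the $\fg_\beta^{a_i}$ pieces. The converse is a direct check that the displayed $W$ is $F[x^{-1}]$- and $\fh$-invariant and complementary to $\mathfrak{D}$.

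Finally, for the $r$-matrix I would combine the two known generating series along the bijection of \cref{subsec:connection_GCYBE}: the $\fg \times \fg$ factor contributes $r_{(S_\pm,\ft_\pm,\fr_\pm,\phi)} + \tfrac12\Omega$ via \cref{eq:rmatrix_from_constant_gxg} (with $\tfrac12\Omega$ arising from the symmetric completion of the diagonal), the type-$0$ first-component data contributes $(\tfrac{\psi}{y\psi-1}\otimes 1)\Omega_\fh + \sum_{i=1}^n \tfrac{a_i\Omega_i}{a_iy-1}$ exactly as in \cref{eq:rmatrix_type_i}, and the leading term is $\tfrac{y\Omega}{x-y}$ because $m = 1$. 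The main obstacle will be the bookkeeping of the previous paragraph: cleanly disentangling, weight by weight, which part of $W_\alpha$ comes from $\mathfrak{w}$ and which from the type-$0$ tail, and proving the precise compatibility ($S_+ \subseteq \Delta_0$ with $a_0 = 0$, closedness of $S_+ + \Delta_i$, and the two $\psi$-relations) rather than merely the existence of some such data.
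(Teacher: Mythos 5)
Your proposal follows essentially the same route as the paper's proof: intersect $W$ with $\fg\times\fg$ to extract $\mathfrak{w}$ and invoke \cref{prop:gxg_splittings}, then use $\fh$- and $F[x^{-1}]$-invariance together with complementarity to $\mathfrak{D}$ to pin down each root piece as $\fg_\alpha^{a_\alpha}\times\{0\}$ (with $a_\alpha=0$ forced on $S_+$, hence $S_+\subseteq\Delta_0$), derive the closedness of $S_+\sqcup\Delta_i$ and the two $\psi$-relations from the same commutator computations, and read off the $r$-matrix as in \cref{eq:rmatrix_type_i}. The only additions are cosmetic — you spell out the complementarity of $\mathfrak{w}$ in $\fg\times\fg$, which the paper leaves implicit — so the argument is correct and matches the paper.
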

\begin{proof}
    Let $W \subseteq L_1$ be a regular subalgebra contained in $\fg[x^{-1}]\times \fg$.
    Define 
    $$\mathfrak{w} \coloneqq W \cap (\fg \times \fg).
    $$
    Regularity of $W$ implies regularity of $\mathfrak{w}$.
    By \cref{prop:gxg_splittings} we can find
    \begin{enumerate}
        \item a regular partition $\Delta = S_+ \sqcup S_-$;
        \item Cartan subspaces $\fs_\pm = \ft_\pm \oplus \fr_\pm$ such that
        $\fh = \fs_+ + \fs_-$ and 
        $\ft_+ \cap \ft_- = \{0\}$;
        \item a linear isomorphism $\phi \colon \fr_+ \to \fr_-$ without non-zero fixed points
    \end{enumerate}
    such that 
    $$
    \mathfrak{w} = \left((\ft_+ \bigoplus_{\alpha \in S_+} \fg_\alpha) \times \{0\}\right) 
        \oplus 
        \left(\{ 0 \} \times (\ft_- \bigoplus_{\beta \in S_-} \fg_\beta) \right)
        \oplus 
        \textnormal{span}_F \{ (h, \phi(h)) \mid h \in \fr_+ \}.
    $$
    By $(x^{-1}, 0)$-invariance of $W$ we have
    $$
    x^{-k}(\fs_+ \bigoplus_{\alpha \in S_+} \fg_\alpha)  \times \{ 0 \} \subseteq W \ \text{ for all } \ k \ge 1. 
    $$
    Furthermore, for each $\alpha \in S_-$
    we can find unique $f \in \fg[\![x]\!]$ and $w \in W$ such that
    $$
    (x^{-1}E_\alpha, 0) = (f, [f]) + w.
    $$
    Since $W$ is of type $0$ we necessarily have the containment $f \in \fg$ forcing
    $(x^{-1}E_\alpha - f, -[f]) \in W$.
    The $\fh$-invariance of $W$ now implies that $f = a_\alpha E_\alpha$. 
    Consequently,
    $$
    \bigoplus_{\alpha \in S_-} (x^{-1} - a_\alpha)\fg_\alpha[x^{-1}]  \times \{ 0 \} \subseteq W.
    $$
    Let $\{ a_\alpha \mid \alpha \in S_-\} = \{a_1, \dots, a_m\}$ and define $\Delta_i \coloneqq \{ \alpha \in S_- \mid a_\alpha = a_i \}.$
    Regularity of $W$ implies that $S_- = \sqcup_i \Delta_i$ is a regular partition of $S_-$.
    Summarizing, $W$ must have the following form
    \begin{align*}
      W &= \left((\ft_+ \bigoplus_{\alpha \in S_+} \fg_\alpha) \times \{0\}\right) 
        \oplus 
        \left(\{ 0 \} \times (\ft_- \bigoplus_{\alpha \in S_-} \fg_\alpha) \right)
        \oplus 
        \textnormal{span}_F \{ (h, \phi(h)) \mid h \in \fr_+ \}  \\
        &\oplus \left( 
        \bigoplus_{\alpha \in S_+} \fg_\alpha
        [x^{-1}]  \times \{ 0 \} \right)
        \oplus 
        \left(\bigoplus_{i=1}^m \bigoplus_{\alpha \in \Delta_i} \fg_\alpha^{a_i} \times \{ 0 \} \right) \oplus W_\psi,
    \end{align*}
    for some $W_\psi \subseteq \fh[x^{-1}] \times \fh$.
    To conclude the regularity of $\Delta = S_+ \sqcup_i \Delta_i$ we need to 
    prove the closedness of $S_+ \sqcup \Delta_i$. 
    For that choose two roots $\alpha \in S_+$ and $\beta \in \Delta_i$ such that 
    $\alpha + \beta \in \Delta_j$.
    This, in particular, would imply the following inclusion
    $$
     [\fg_\alpha \times \{ 0 \}, \fg^{a_i}_\beta \times \{0 \}] = \fg^{a_i}_{\alpha + \beta} \times \{ 0 \} \subseteq \fg^{a_j}_{\alpha + \beta} \times \{ 0 \} \subseteq W,
    $$
    which is possible if and only if $a_i = a_j$ and hence $i = j$.
    Therefore, $\Delta = S_+ \sqcup_i \Delta_i$ is a regular partition of $\Delta$. 

    It now remains to understand the Cartan part $W_\psi = \{x^{-n} (x^{-1}h - \psi(h), 0) \mid h \in \fh\}$.
    The arguments above imply that 
    $$
        x^{-1}\fs_+[x^{-1}] \times \{ 0\} \subseteq W_\fh
    $$
    giving $\psi(\fs_+) = 0$.
    Now take an arbitrary root $\alpha \in \Delta$ and consider three cases:
    \begin{enumerate}
        \item For $\pm \alpha \in S_+$ we must have $H_\alpha \in \fs_+$ and hence $\psi(H_\alpha) = 0$;
        \item When $\alpha \in S_+$ and $-\alpha \in \Delta_i$ the relation
        $[\fg_\alpha,\fg_{-\alpha}^{a_i}] = (x^{-1}-a_i)FH_\alpha$ 
        implies $\psi(H_\alpha) = a_i$;
        \item Finally, when $\alpha \in \Delta_i$ and $-\alpha \in \Delta_j$ the equality
        $$[\fg^{a_i}_\alpha,\fg_{-\alpha}^{a_j}] = (x^{-1} - a_i)(x^{-1}-a_j)FH_\alpha$$
        leads to $(\psi-a_i)(\psi-a_j)H_\alpha = 0$.
    \end{enumerate}
    Retracing the arguments above also shows that \(W\) defined by \(\mathfrak{w}\), \(\psi\) and \(\Delta = \bigsqcup_{i = 1}^n\Delta_i\) as above provides a regular subalgebra of \(L_1\) of type 0. The formula for the \(r\)-matrix is deduced similarly to \cref{eq:rmatrix_type_i}.
\end{proof}

\begin{example}
    The subalgebra \(W = W_+ \times W_-\) with 
    \begin{equation*}
        W_+ = (\fh \oplus \fl \oplus x^{-1}\fr)[x^{-1}] \oplus \bigoplus_{\alpha \in \Delta_+^{\ge k_i\alpha_i}}\fg_\alpha^{a_\alpha}    
    \end{equation*}
    and \(W_- = \fr\) defines a regular decomposition \(L_1 = \Delta \oplus W\) of type \(k_i \ge 0\).
\end{example}

\subsection{Weakly regular decompositions for $L_1$}%
\label{subsec:weakly_regular_decomp}
For subalgebras inside \(L_m\), \(m \ge 1\), there exists another natural notion of $h$-invariance. 
Precisely, instead of assuming the invariance of $W$ with respect to the action of $(h,h)$,
one can require the projections \(W_+\) and $W_-$ onto \(\fg(\!(x)\!)\) and \(\fg[x]/x^m\fg[x]\) respectively to be \(\fh\)-invariant.
For simplicity, we call upper-bounded and $F[x^{-1}]$-invariant subalgebras $W \subseteq L_m$ with the requirement above \emph{weakly regular}.
Using a slight generalization of Belavin-Drinfeld construction \cite{belavin_drinfeld_solutions_of_CYBE_paper}
we can create examples of weakly regular subalgebras $W \subseteq L_1$. 
The construction can be extended to loop algebras to produce weakly regular subalgebras in $L_m$, $m \ge 1$; See \cref{rem:weakly_regular_Lm}.

Let $A = (a_{i,j})_{i,j = 1}^n$ be the Cartan matrix of $\fg$.
We call \((\Gamma_1,\Gamma_2,\tau)\) \emph{a generalized Belavin-Drinfeld triple} if \(\Gamma_1, \Gamma_2 \subseteq \pi\) are subsets of simple roots and \(\tau \colon \Gamma_1 \to \Gamma_2\) is a bijection such that:
    \begin{enumerate}
        \item \(a_{\tau(i), \tau(j)} = a_{i,j}\) for all $\alpha_i, \alpha_j \in \Gamma_1$. In other words, $\tau$ preserves the entries of the Cartan matrix corresponding to the roots in $\Gamma_1$, i.e.\
        $$
            \frac{\langle\alpha_i, \alpha_j\rangle}{\langle\alpha_i, \alpha_i\rangle} = \frac{\langle\tau(\alpha_i), \tau(\alpha_j) \rangle}{\langle \tau(\alpha_i), \tau(\alpha_i) \rangle}; 
        $$

        \item \(\tau(\alpha),\dots,\tau^{k-1}(\alpha) \in \Gamma_2\) but \(\tau^k(\alpha) \notin \Gamma_1\)  holds for every \(\alpha \in \Gamma_1\) and some \(k \in \bN\).
    \end{enumerate}
Then \(\tau\) defines an isomorphism \(\theta_\tau\colon\fg_{\Gamma_1} \to \fg_{\Gamma_2}\) of the Lie subalgebras \(\fg_{\Gamma_1},\fg_{\Gamma_2} \subseteq \fg\) generated by \(\{E_{\alpha},E_{-\alpha}\mid \alpha \in \Gamma_1\}\) and \(\{E_{\alpha},E_{-\alpha}\mid \alpha \in \Gamma_2\}\) respectively. 
Precisely, one defines the isomorphism by  \(E_{\pm \alpha} \mapsto E_{\pm \tau(\alpha)}\).

Let us write $\Delta_{\Gamma_i}$ for the root subsystem of $\Delta$ generated by $\Gamma_i$ and put \(\Delta_{\Gamma_i}^\pm \coloneqq \Delta_{\Gamma_i} \cap \Delta_\pm\). 
We denote the restriction of \(\theta_\tau\)  to 
\(\bigoplus_{\alpha \in \Delta_{\Gamma_1}^\pm}\fg_\alpha\)
with $\theta_\tau^+$ and the restriction of its inverse $\theta_\tau^{-1}$ to \(\bigoplus_{\alpha \in \Delta_{\Gamma_2}^\pm}\fg_\alpha\) 
with $\theta_\tau^-$. 
Both restrictions can be extended by $0$ on the remaining root vectors to produce two endomorphisms
$\theta_\tau^\pm \colon \fn_\pm \to \fn_\pm$, that we denote with the same letters.
Due to condition 2.\ on $\tau$, the endomorphisms $\theta_\tau^\pm$ are nilpotent.
Consequently, 
$$
\rho_\tau^\pm \coloneqq \theta_\tau^\pm/(1-\theta_\tau^\pm) \coloneqq \sum_{k = 1}^\infty \theta_\tau^{\pm, k}\colon \fn_\pm \to \fn_\pm
$$
are well-defined endomorphisms as well.

We say that \(\phi \colon \fh \to \fh\) is compatible with \((\Gamma_1,\Gamma_2,\tau)\) if 
\begin{equation}%
\label{eq:compatible_linear_map}
    \{(h_\alpha,h_{\tau(\alpha)})\mid \alpha \in \Gamma_1\} \subseteq \{(\phi(h)+h,\phi(h)-h)\mid h \in \fh\}.
\end{equation}
Given a generalized Belavin-Drinfeld triple with a compatible linear map \(\phi\), we can construct a weakly regular subalgebra $W \subseteq L_1$ and the associated \(r\)-matrix in a way similar to \cref{prop:form_W_L_1}. 
To simplify the notation, let us define
\begin{equation}
\label{eq:constant_rmatrix_from_BD}
    r_{(\Gamma_1,\Gamma_2,\tau,\phi)} \coloneqq (\phi \otimes 1)\Omega_\fh + 2\left(\sum_{\alpha \in \Delta_+}(\rho^+_\tau-1)(E_\alpha) \otimes E_{-\alpha}-\sum_{\alpha \in \Delta_+}(\rho^-_\tau-1)(E_{-\alpha}) \otimes E_{\alpha}\right). 
\end{equation}
This is a solution to \cref{eq:mgcybe_for_r}.

\begin{proposition}%
\label{prop:weakly_regular_construction}
    For every linear isomorphism \(\phi \colon \fh \to \fh \) compatible with a generalized Belavin-Drinfeld triple \((\Gamma_1,\Gamma_2,\tau)\) we can define 
    \begin{equation*}
        \begin{split}
        \mathfrak{w}_{(\Gamma_1,\Gamma_2,\tau,\phi)} 
            \coloneqq 
            &\left(\bigoplus_{\alpha \in \Delta_+ \setminus \Delta_{\Gamma_1}} \fg_\alpha \times \{0\}\right) \oplus \left(\bigoplus_{\alpha \in \Delta_-\setminus \Delta_{\Gamma_2}}\{0\} \times \fg_\alpha\right)
            \\
            & \ \ \oplus \{(E_\alpha,E_{\tau(\alpha)})\mid \alpha \in \Delta_{\Gamma_1}\} \oplus \{(\phi(h)+h,\phi(h)-h)\mid h \in \fh\}.
        \end{split}
    \end{equation*}
    Then 
    $$
    W \coloneqq \mathfrak{w}_{(\Gamma_1,\Gamma_2,\tau,\phi)} \oplus \left(\left(W_{\psi}
        \bigoplus_{i = 0}^n\bigoplus_{\alpha \in \Delta_i}\fg_\alpha^{a_i}\right)\times \{0\}\right),
    $$
    where
    \begin{itemize}
        \item $\Delta = \sqcup_{i=0}^n \Delta_i$ is a regular partition such that $\Delta_{\Gamma_1} \subseteq \Delta_0$ and $\Delta_{\Gamma_1} + \Delta_i$ are closed;
        \item \(a_0,\dots,a_n \in F\) are distinct constants with \(a_0 = 0\);

        \item \(\psi \colon \fh \to \fh\) is a linear map satisfying 
        \(\psi(\textnormal{Im}(\phi+1)) = \{0\}\) and 
        \begin{equation*}
            \begin{cases}
                (\psi-a_i)H_\alpha = 0& \alpha \in \Delta_{\Gamma_1}, -\alpha \in \Delta_i,\\
                (\psi-a_i)(\psi-a_j)H_\alpha = 0 & \alpha \in \Delta_i,-\alpha \in \Delta_j,
            \end{cases}
        \end{equation*}
    \end{itemize}
    is a weakly regular subalgebra of \(L_1\).

    Furthermore, the associated \(r\)-matrix is given by
    \begin{equation*}
        r(x,y) = \frac{y\Omega}{x-y} + r_{(\Gamma_1,\Gamma_2,\tau,\phi)} +\frac{1}{2}\Omega+\left(\frac{\psi}{y\psi - 1} \otimes 1\right)\Omega_\fh + \sum_{i = 1}^n \frac{a_i\Omega_i}{a_iy-1},
    \end{equation*}
    where \(\Omega_i = \sum_{\alpha \in \Delta_i}E_\alpha \otimes E_{-\alpha}\) and \(r_{(\Gamma_1,\Gamma_2,\tau,\phi)}\) is given by \cref{eq:constant_rmatrix_from_BD}.
\end{proposition}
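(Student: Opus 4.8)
The plan is to follow the blueprint of the proof of \cref{prop:form_W_L_1}, replacing the $\fh$-invariant constant piece $\mathfrak{w}$ furnished by \cref{prop:gxg_splittings} with the Belavin--Drinfeld piece $\mathfrak{w}\coloneqq\mathfrak{w}_{(\Gamma_1,\Gamma_2,\tau,\phi)}$, and then checking that only the \emph{projections} of the resulting $W$, rather than $W$ itself, are $\fh$-invariant.

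First I would treat the constant part in isolation and show that $\mathfrak{w}$ is a subalgebra of $\fg\times\fg$ complementary to $\mathfrak{d}$; this is the generalized Belavin--Drinfeld step. Closedness under the bracket is checked summand by summand: the pure-left piece $\bigoplus_{\alpha\in\Delta_+\setminus\Delta_{\Gamma_1}}\fg_\alpha\times\{0\}$ and its right counterpart close because $\Delta_{\Gamma_1},\Delta_{\Gamma_2}$ are subsystems (a positive sum landing in $\Delta_{\Gamma_1}$ forces both summands into $\Delta_{\Gamma_1}$), the glued piece $\{(E_\alpha,E_{\tau(\alpha)})\mid\alpha\in\Delta_{\Gamma_1}\}$ closes because $\theta_\tau$ is a Lie algebra isomorphism (so $\tau$ respects structure constants), and the only delicate case $[(E_\alpha,E_{\tau\alpha}),(E_{-\alpha},E_{-\tau\alpha})]=(H_\alpha,H_{\tau\alpha})$ lands in the Cartan summand precisely by the compatibility \cref{eq:compatible_linear_map}. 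Complementarity $\mathfrak{w}\oplus\mathfrak{d}=\fg\times\fg$ then follows from $\dim\mathfrak{w}=\dim\fg$ together with $\mathfrak{w}\cap\mathfrak{d}=\{0\}$, the latter using that $\tau$ is fixed-point free (condition 2 on the triple) and that $\phi(h)+h=\phi(h)-h$ forces $h=0$. Finally, writing $\mathfrak{w}=\{(Ra+a,Ra-a)\}$ as in \cref{rem:h_invariant_constant_gcybe} and reading off the associated tensor identifies $r_{(\Gamma_1,\Gamma_2,\tau,\phi)}$ of \cref{eq:constant_rmatrix_from_BD} as the corresponding solution of \cref{eq:mgcybe_for_r}.

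Next I would assemble $W=\mathfrak{w}\oplus\big((W_\psi\oplus\bigoplus_{i,\alpha}\fg_\alpha^{a_i})\times\{0\}\big)$ and verify it is a subalgebra of $L_1$. The summand $I_+\coloneqq W_\psi\oplus\bigoplus_i\bigoplus_{\alpha\in\Delta_i}\fg_\alpha^{a_i}$ is a type-$0$ subalgebra of $\fg(\!(x)\!)$ by \cref{prop:W_from_regular_splittings}, since $\Delta=\sqcup_i\Delta_i$ is regular and the stated eigenvalue conditions on $\psi$ are exactly \cref{eq:condition_on_phi}; hence $[I_+\times\{0\},I_+\times\{0\}]\subseteq I_+\times\{0\}$, and together with the previous step it remains to show $[\mathfrak{w},I_+\times\{0\}]\subseteq W$. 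As this bracket has zero second component, the task reduces to $[\mathfrak{w}_+,I_+]\subseteq K_+$, where $K_+$ is the first component of $W\cap(\fg(\!(x)\!)\times\{0\})$; one computes $K_+=\fg_\alpha[x^{-1}]$ for $\alpha\in\Delta_+\setminus\Delta_{\Gamma_1}$, $K_+=x^{-1}\fg_\alpha[x^{-1}]$ for $\alpha\in\Delta_{\Gamma_1}$, and $K_+=\fg_\alpha^{a_i}$ for the remaining roots $\alpha\in\Delta_i$. I expect this closure check to be the main obstacle. For the pure-left roots it is automatic, since $K_+$ already contains all of $\fg_\alpha[x^{-1}]$, but for the glued roots $\alpha\in\Delta_{\Gamma_1}$ the bracket $[E_\alpha,x^{-1}h-\psi(h)]$ produces a spurious constant term $\alpha(\psi(h))E_\alpha$ which cannot be absorbed into $\mathfrak{w}$ (because $(E_\alpha,0)\notin\mathfrak{w}$) and therefore must vanish; guaranteeing this is exactly the purpose of the condition $\psi(\textnormal{Im}(\phi+1))=\{0\}$ together with the eigenvalue relations for $\psi$, while the closedness of $\Delta_{\Gamma_1}+\Delta_i$ and regularity of the partition dispose of the analogous root--root brackets.

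Finally, the remaining structural properties and the $r$-matrix are comparatively routine. Complementarity $W\oplus\mathfrak{D}=L_1$ reduces, via $\fg[x^{-1}]=\fg\oplus I_+$ (from $\fg(\!(x)\!)=\fg[\![x]\!]\oplus I_+$) and $\mathfrak{w}\oplus\mathfrak{d}=\fg\times\fg$, to the identities $W+\mathfrak{d}=\fg[x^{-1}]\times\fg$ and $W\cap\mathfrak{D}=\mathfrak{w}\cap\mathfrak{d}=\{0\}$. The $F[x^{-1}]$-invariance holds because $W_+$ contains the full $\fg_\alpha[x^{-1}]$ on positive and glued roots, while on the Cartan the element $x^{-1}(\phi(h)+h)$ equals $x^{-1}h'-\psi(h')$ with $h'=\phi(h)+h\in\ker\psi$ and so lies in $W_\psi$; the $(0,[x])$-invariance is vacuous since $[x]=0$ in $\fg[x]/x\fg[x]$. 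Weak regularity is immediate, since $W_-=\mathfrak{w}_-$ and $W_+=\mathfrak{w}_++I_+$ are manifestly sums of root spaces and Cartan subspaces, whereas $W$ fails to be $(h,h)$-invariant because $\alpha\neq\tau(\alpha)$. The $r$-matrix then takes the stated form by the dictionary of \cref{subsec:connection_GCYBE}: the $F[x^{-1}]$-part contributes $\left(\tfrac{\psi}{y\psi-1}\otimes1\right)\Omega_\fh+\sum_i\frac{a_i\Omega_i}{a_iy-1}$ exactly as in \cref{eq:rmatrix_type_i}, the $m=1$ diagonal shift produces $\tfrac{y\Omega}{x-y}+\tfrac12\Omega$, and the constant piece $\mathfrak{w}$ contributes $r_{(\Gamma_1,\Gamma_2,\tau,\phi)}$ by the first step.
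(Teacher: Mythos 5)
The paper never actually proves this proposition --- it only remarks that the construction works ``in a way similar to \cref{prop:form_W_L_1}'' --- so you are supplying a missing argument, and your architecture (show $\mathfrak{w}_{(\Gamma_1,\Gamma_2,\tau,\phi)}$ is a complement of $\mathfrak{d}$ in $\fg\times\fg$, graft on the type-$0$ tail via \cref{prop:W_from_regular_splittings}, then check the mixed brackets) is the right one. But your verification that $\mathfrak{w}_{(\Gamma_1,\Gamma_2,\tau,\phi)}$ is a subalgebra has a genuine hole: you check the root--root brackets, yet you never bracket the Cartan summand $\{(\phi(h)+h,\phi(h)-h)\}$ against the glued summand. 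For $\alpha\in\Delta_{\Gamma_1}$ one gets
$$[(\phi(h)+h,\phi(h)-h),(E_\alpha,E_{\tau(\alpha)})]=\bigl(\alpha(\phi(h)+h)E_\alpha,\ \tau(\alpha)(\phi(h)-h)E_{\tau(\alpha)}\bigr),$$
and since neither $(E_\alpha,0)$ nor $(0,E_{\tau(\alpha)})$ lies in $W$ (for $\alpha\in\Delta_{\Gamma_1}\subseteq\Delta_0$, $a_0=0$, the part of $W$ supported on $\fg_\alpha\times\{0\}$ is only $x^{-1}\fg_\alpha[x^{-1}]\times\{0\}$), this element belongs to $\mathfrak{w}$ only if $\alpha(\phi(h)+h)=\tau(\alpha)(\phi(h)-h)$ for all $h\in\fh$. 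Writing $\alpha(\cdot)=\kappa(H_\alpha,\cdot)$, this amounts to $\phi^{*}(H_\alpha-H_{\tau(\alpha)})=-(H_\alpha+H_{\tau(\alpha)})$, where $\phi^{*}$ is the $\kappa$-adjoint of $\phi$, whereas \cref{eq:compatible_linear_map} gives $\phi(H_\alpha-H_{\tau(\alpha)})=H_\alpha+H_{\tau(\alpha)}$. These coincide only when $H_\alpha-H_{\tau(\alpha)}\in\ker(\phi+\phi^{*})$. So the bracket you omitted is the genuinely delicate one, and it needs either a proof that compatibility forces this adjoint identity or an explicit additional hypothesis on $\phi$ (or a sharper reading of ``compatible''); the coroot bracket $[(E_\alpha,E_{\tau\alpha}),(E_{-\alpha},E_{-\tau\alpha})]$ that you single out is, by contrast, immediate from \cref{eq:compatible_linear_map}.

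The same adjoint slippage occurs in the paragraph you flag as the main obstacle: the spurious constant term in $[E_\alpha,x^{-1}h-\psi(h)]$ is $\alpha(\psi(h))E_\alpha=\kappa(H_\alpha,\psi(h))E_\alpha$, so what must vanish is $\psi^{*}(H_\alpha)$, i.e.\ $H_\alpha\perp\mathrm{Im}(\psi)$; the hypotheses $\psi(\mathrm{Im}(\phi+1))=\{0\}$ and $(\psi-a_0)H_\alpha=0$ give $\psi(H_\alpha)=0$, i.e.\ $H_\alpha\in\ker\psi$, which is not the same statement. The rest of your sketch (complementarity, $F[x^{-1}]$-invariance, weak $\fh$-invariance, and the assembly of the $r$-matrix from \cref{eq:rmatrix_type_i} and \cref{eq:constant_rmatrix_from_BD}) is routine and correct, but the two adjoint identities above are exactly where the content of ``compatible'' lives, and in your write-up they are asserted rather than proved.
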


\begin{remark}%
\label{rem:weakly_regular_Lm}
    It was shown in \cite{KPSST,abedin_maximov_classical_twists} that quasi-trigonometric $r$-matrices, i.e.\ formal solutions of the classical Yang-Baxter equation of the form
    $$
        r(x,y) = \frac{y \Omega}{x-y} + p(x,y)
    $$
    for some $p \in (\fg \ot \fg)[x,y]$ are in bijection with Lagrangian Lie subalgebras of $L_1$ with respect to a certain form.
    Furthermore, in \cite{abedin_maximov_classical_twists} these solutions were classified in terms of triples $(\Gamma_1, \Gamma_2, \tau)$, where 
    \begin{itemize}
        \item $\Gamma_i$ are subsets of simple roots $\Pi$ of the loop algebra $\fg[x,x^{-1}]$;
        \item the minimal root $\alpha_0$ is not in $\Gamma_1$ and
        \item $\tau \colon \Gamma_1 \to \Gamma_2$ is a bijection such that
        $\langle \alpha, \beta \rangle = \langle \tau(\alpha), \tau(\beta) \rangle $  and $\tau^k(\alpha) \not \in \Gamma_1$ for all $\alpha, \beta \in \Gamma_1$ and some $k \in \mathbb{N}$.
    \end{itemize}
    If the map $\tau$ in the triple $(\Gamma_1, \Gamma_2, \tau)$ does not preserve the form, but only the extended affine Cartan matrix, i.e.\ $a_{\tau(i), \tau(j)} = a_{i,j}$ for $\alpha_i \in \Gamma_1$,
    we can use the same construction with minimal adjustments to produce weakly regular subalgebras of $L_1$ of type $0$ or $1$ respectively.

    More precisely, if $\alpha_0 \not \in \Gamma_2$, then we get the generalized triple described above. 
    In case $\alpha_0 \in \Gamma_2$ we can use the same formulas for $r_{(\Gamma_1,\Gamma_2,\tau,\phi)}$ and $\mathfrak{m}_{(\Gamma_1,\Gamma_2,\tau,\phi)}$,
    where we understand $\Delta_\pm$ and $\Delta_{\Gamma_i}$ as subsystems of the affine root system $\Delta$ of $\fg[x,x^{-1}]$.
    The subalgebra $\mathfrak{m}_{(\Gamma_1,\Gamma_2,\tau,\phi)}$ obtained in this way will in general be a subalgebra of $\fg[x,x^{-1}] \times \fg[x]$ complementary to the diagonal embedding of $\fg[x,x^{-1}]$. 
    To obtain a weakly regular subalgebra of $L_1$ it is enough to project $\mathfrak{m}_{(\Gamma_1,\Gamma_2,\tau,\phi)}$ onto $\fg[x,x^{-1}] \times \fg$ and then add $x^{-1}\fg[x^{-1}] \times \{ 0 \}$.
\end{remark}

\begin{example}
    Assume $\Delta$ is of exceptional type $G_2$.
    There are two simple roots $\{\alpha, \beta \}$
    and, without loss of generality, we assume that $\alpha$ is the shortest root.
    Put $\Gamma_1 = \{ \alpha \}$ and $\Gamma_2 = \{ \beta \}$.
    The bijection $\tau$ given by  $\tau(\alpha) = \beta$ trivially satisfies conditions 1.\ and 2.\ above. 
    Consequently, $(\Gamma_1, \Gamma_2, \tau)$ is a generalized Belavin-Drinfeld triple.
    Let $\{ X_{\pm \alpha}, X_{\pm \beta}, H_\alpha, H_\beta \}$ be the Chevalley basis for $\fg(\Delta)$, then
    \begin{align*}
        \fh_{\Gamma_1}^\perp = \textnormal{span}_F \{ \underbrace{3H_\alpha + 2H_\beta}_{\eqcolon H_1} \}, \ \
        \fh_{\Gamma_2}^\perp = \textnormal{span}_F \{ \underbrace{2H_\alpha + H_\beta}_{\eqcolon H_2}  \}.
    \end{align*}
    Consequently, by choosing the trivial $\psi$ and a non-zero $\lambda \in F$ we get the following weakly regular subalgebra of $L_1$:
    \begin{align*}
        W \coloneqq & \{ 
            X_{\beta}, X_{\alpha + \beta},
            X_{2\alpha + \beta}, X_{3\alpha + \beta},
            X_{3\alpha + 2\beta}
        \} \times \{ 0 \} \\ 
        & \oplus 
        \{ 0 \} \times
        \{X_{-\alpha}, X_{-\alpha - \beta},
            X_{-2\alpha - \beta}, X_{-3\alpha - \beta},
            X_{-3\alpha - 2\beta}\} \\
        & \oplus 
        \{ (X_\alpha, X_\beta), (H_\alpha, H_\beta), (H_1, \lambda H_2)\} \\
        & \oplus
        x^{-1}\fg[x^{-1}] \times \{ 0 \}. 
    \end{align*}
    We can see that this subalgebra is not regular in our sense, because
    $$
    (H_\alpha, H_\alpha) \cdot (X_\alpha, X_\beta) = (a_{11} X_\alpha, a_{12} X_\beta ) = (2 X_\alpha, -3 X_\beta)
    $$ 
    which is not in $W$.
\end{example}

\begin{example}
    The same construction can be applied to the orthogonal Lie algebra $\mathfrak{o}(5)$. 
    Let $\alpha$ and $\beta$ be two simple roots of $B_2$, such that $\alpha$ is shorter than $\beta$, and $\{ X_{\pm \alpha}, X_{\pm \beta}, H_\alpha, H_\beta \}$ be again the Chevalley basis of $\mathfrak{o}(5)$. 
    Then
    \begin{align*}
        W \coloneqq & \{ 
            X_{\beta}, X_{\alpha + \beta},
            X_{2\alpha + \beta} \} \times \{ 0 \} \\ 
        & \oplus 
        \{ 0 \} \times
        \{X_{-\alpha}, X_{-\alpha - \beta},
            X_{-2\alpha - \beta}\} \\
        & \oplus 
        \{ (X_\alpha, X_\beta), (H_\alpha, H_\beta), (H_\alpha + H_\beta, \lambda (2H_\alpha + H_\beta))\} \\
        & \oplus
        x^{-1}\fg[x^{-1}] \times \{ 0 \}. 
    \end{align*}
    is again a weakly regular subalgebra of $L_1$
    for any non-zero $\lambda \in F$.
\end{example}

\subsection{Regular decomposition \(L_m = \mathfrak{D} \oplus W\) with \(m > 0\) of type \(k > 0\)}%
\label{subsec:regular_L_2_all_types}
The classification of regular subalgebras \(W \subseteq L_m\) of type \(k > 0\) is overly convoluted to be formulated in detail. 
However, we can observe that regular subalgebras of \(L_m\) always admit the following standard form.

\begin{theorem}%
\label{thm:L2_reduction_to_L1}
    Let $L_m = \Delta \oplus W$ be a regular decomposition for \(m > 0\). Then
    \begin{equation}%
    \label{eq:W_standard_form_m>0k>0}
        W = W_\fh \oplus (I_+ \times \{ 0 \}) \oplus (\{ 0 \} \times I_-),
    \end{equation}
    where \(W_\fh \subseteq \fh[x^{-1}] \times \fh[x]/x^m\fh[x]\) is an appropriate subspace and
    \begin{equation*}%
\begin{aligned}
    I_+ \times \{0\} &\coloneqq W \cap (x\fg[x^{-1}] \times \{0 \}), \\
    \{ 0 \} \times I_- &\coloneqq
    W \cap (\{0\} \times \fg[x]/x^{m}\fg[x]).
\end{aligned}
\end{equation*}
\end{theorem}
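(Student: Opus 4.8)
The plan is to reduce to $m\in\{1,2\}$ via \cref{rem:restriction_on_k}, then decompose $W$ into weight spaces for the adjoint action of $\{(h,h)\mid h\in\fh\}$. Writing $W=W^0\oplus\bigoplus_{\alpha\in\Delta}W^\alpha$, the zero-weight part $W^0$ is valued in $\fh(\!(x)\!)\times\fh[x]/x^m\fh[x]$, and its left projection lies in $W_+\subseteq x\fg[x^{-1}]$, hence in $\fh[x^{-1}]$; each $W^\alpha$ is valued in $\fg_\alpha$. I claim it suffices to show that every root-weight space splits into its pure-left and pure-right intersections,
\[
W^\alpha=\bigl(W^\alpha\cap(\fg(\!(x)\!)\times\{0\})\bigr)\oplus\bigl(W^\alpha\cap(\{0\}\times\fg[x]/x^m\fg[x])\bigr),\qquad \alpha\neq0 .
\]
Indeed, granting this, I collect all pure-left summands into $I_+\times\{0\}$, all pure-right summands into $\{0\}\times I_-$, and let $W_\fh$ be any vector-space complement of $\bigl(W^0\cap(I_+\times\{0\})\bigr)\oplus\bigl(W^0\cap(\{0\}\times I_-)\bigr)$ inside $W^0$; such a $W_\fh$ automatically sits in $\fh[x^{-1}]\times\fh[x]/x^m\fh[x]$. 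So the entire content is the root-space splitting.

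Fix $\alpha\neq0$ and set $W_+^\alpha=p_+(W^\alpha)$, $W_-^\alpha=p_-(W^\alpha)$, together with the pure parts $I_+^\alpha=\ker(p_-|_{W^\alpha})$ and $I_-^\alpha=\ker(p_+|_{W^\alpha})$. Multiplying by $(x^{-1},0)$ lands inside the pure-left part and by $(0,[x])$ inside the pure-right part, which yields $x^{-1}W_+^\alpha\subseteq p_+(I_+^\alpha)$ and $[x]W_-^\alpha\subseteq p_-(I_-^\alpha)$. Since $W_+^\alpha$ is an $F[x^{-1}]$-submodule of the rank-one free module $xF[x^{-1}]E_\alpha$ over the PID $F[x^{-1}]$, and $W_-^\alpha$ is an $[x]$-stable subspace of $(F[x]/x^m)E_\alpha$, the quotients $W_+^\alpha/x^{-1}W_+^\alpha$ and $W_-^\alpha/[x]W_-^\alpha$ are one-dimensional. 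By Goursat's lemma $W^\alpha$ is the fibre product of $W_+^\alpha$ and $W_-^\alpha$ over a common quotient $Q^\alpha\cong W_+^\alpha/I_+^\alpha\cong W_-^\alpha/I_-^\alpha$, and the desired splitting is exactly the statement $Q^\alpha=0$. The two containments above force $\dim Q^\alpha\le 1$, so the only possible failure is a one-dimensional coupling between the top-degree (leading) coefficient of $W_+^\alpha$ and the constant-term coefficient of $W_-^\alpha$.

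The hard part, and the main obstacle, is to exclude $Q^\alpha\neq0$; this is precisely where the \emph{Lie-algebra} structure of $W$ and its complementarity to the diagonal $\fD$ become indispensable, as the module- and boundedness conditions alone do admit such a one-dimensional mixing (a direct check produces $(x^{-1},0)$- and $(0,[x])$-stable complements of $\fD^\alpha$ with $Q^\alpha\neq0$, so no purely linear argument can suffice). The mechanism mirrors the proof of \cref{prop:gxg_splittings}. Assuming $Q^\alpha\neq0$, there is $w_\alpha\in W^\alpha$ whose leading left coefficient $g_\alpha E_\alpha$ and nonzero constant right coefficient $\rho_\alpha E_\alpha$ are genuinely coupled; passing to leading and constant coefficients realises $W$, on the nose, as a regular subalgebra of $\fg\times\fg$ in which the root $\alpha$ would lie in both parts $S_+$ and $S_-$ of the partition. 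Bracketing $w_\alpha$ with a companion weight-$(-\alpha)$ element then produces a zero-weight element whose left component is a nonzero multiple of the top-degree Cartan element $H_\alpha$ while its right component differs from the value dictated by $\fD$ only by a nonzero socle term—exactly the contradiction $(cH_\alpha,cH_\alpha)\in W\cap\fD$ with $c\neq0$ extracted in \cref{prop:gxg_splittings} from a shared root, which is impossible since $L_m=\fD\oplus W$. Hence $Q^\alpha=0$ for all $\alpha\neq0$, the root-space splitting holds, and assembling the summands as in the first paragraph gives the asserted standard form \cref{eq:W_standard_form_m>0k>0}.
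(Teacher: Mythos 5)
Your setup is sound and in fact cleaner than the paper's: the weight-space decomposition, the Goursat-type identification of $W^\alpha$ as a fibre product over a quotient $Q^\alpha$ with $\dim Q^\alpha\le 1$ (via $x^{-1}W_+^\alpha\subseteq p_+(I_+^\alpha)$ and $[x]W_-^\alpha\subseteq p_-(I_-^\alpha)$), and your observation that purely linear data do admit $Q^\alpha\neq 0$ (e.g.\ for $m=1$ the space $x^{-1}(1+cx)F[x^{-1}]E_\alpha\times\{0\}\oplus F\bigl((1+cx)E_\alpha,\rho E_\alpha\bigr)$ with $\rho\notin\{0,1\}$ is a stable complement of $\fD^\alpha$) are all correct and correctly locate the entire difficulty in excluding the coupling. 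But that exclusion is precisely where your proof stops being a proof. The assertion that ``passing to leading and constant coefficients realises $W$, on the nose, as a regular subalgebra of $\fg\times\fg$'' is unjustified and, as stated, false: the generator $f_\alpha$ of $W_+^\alpha$ has top degree $+1$ for $\alpha\in\Delta_+^{\ge k_i\alpha_i}$ and top degree $\le 0$ otherwise, so ``take the leading left coefficient'' is not a map into a single copy of $\fg$ and does not intertwine brackets (the leading coefficient of $f_\alpha f_{-\alpha}$ need not be the product of leading coefficients, and the bracket lands in the zero-weight space $W^0$, which you have deliberately left as a black box). Consequently the punchline --- producing $(cH_\alpha,cH_\alpha)\in W\cap\fD$ ``exactly as in \cref{prop:gxg_splittings}'' --- does not follow: what the bracket actually produces is an element of $W^0$ with a polynomial left component, and one still has to argue it down to a constant diagonal element.

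This missing reduction is the bulk of the paper's argument. There, one first uses complementarity to $\fD$ together with $W_+\subseteq\mathfrak{P}_i$ to show $\{0\}\times(\fh\oplus\fl\oplus\fr)[x]\subseteq W$ and to establish, for each $E_\alpha\in\fc$, the dichotomy $(xE_\alpha,0)\in W$ or $(0,[x]E_\alpha)\in W$ (using $x^{-1}\fr[x^{-1}]\times\{0\}\subseteq W$ and commutators with $(x^{-1}E_{-\alpha},0)$); one then eliminates the $x$-coefficient of the coupling polynomial by a divisibility argument ($x^{-1}f_\alpha^2-af_\alpha$ must be a multiple of $f_\alpha$ in $x^{-1}F[x^{-1}]$), and eliminates the $x^{-1}$-coefficient by a separate complementarity argument, before the coupling is constant and the $\fg\times\fg$ mechanism of \cref{prop:gxg_splittings} applies. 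None of these steps is routine, and none is present in your proposal; so while the skeleton is right and the identification of the obstruction is exactly correct, the proof of the key step is a gesture rather than an argument.
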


\begin{proof}
    As it was explained in the beginning of \cref{sec:regular_1_2}, we can restrict our attention to cases $m=1$ and $2$.
    However, for any regular \(W \subseteq L_1\) we can define a regular subalgebra
    $$
    \widetilde{W} \coloneqq W \oplus (\{0\} \times x\fg[x]) \subseteq L_2.
    $$
    If $\widetilde{W}$ has the form \cref{eq:W_standard_form_m>0k>0}, then so does $W$.
    For this reason, it is sufficient to prove the statement for $m=2$.

    Let $W \subseteq L_2$ be a regular subalgebra.
    Define a map $\varphi \colon W_+ / I_+ \to W_- / I_-$ as in 
    \cref{eq:isomorphism_quotionts}.
    It is again a Lie algebra isomorphism having no non-zero fixed points and intertwining the action of the Cartan subalgebra $\fh$.
    Consequently, we can write
    \begin{equation}%
    \label{eq:root_spaces_arbitrary_W_L2}
    \begin{aligned}
        W = 
        W_\fh &\oplus 
        (I_+ \times \{0\}) \oplus
        (\{ 0 \} \times I_-) \\ &\oplus
        \underbrace{\textnormal{span}_F \{(p_\alpha E_\alpha, [q_\alpha] E_\alpha) \mid p_\alpha E_\alpha + I_+ \in W_+ / I_+ \}}_{D \coloneqq}.
    \end{aligned}
    \end{equation}
    \noindent
    Here $W_\fh \subseteq \fh[x^{-1}] \times \fh[x]/x^{2}\fh[x]$, $p_\alpha \in F[x,x^{-1}]$, $q_\alpha \in F[x]$ and the elements in $D$ are glued using $\varphi$, namely $$\varphi(p_\alpha E_\alpha + I_+) = [q_\alpha] E_\alpha + I_-.$$
    The $\fh$-invariance of $W$ implies the $\fh$-invariance of $W_\pm$. 
    Using \cref{lemm:regular_subalgebra_standard_form} we can decompose
    \begin{equation*}
        W_+ = W_{\fh, +} \bigoplus_{\alpha \in \Delta} f_\alpha \fg_{\alpha}[x^{-1}],
    \end{equation*}
    for some non-zero $f_\alpha = a_\alpha x^{-1} + b_\alpha + c_\alpha x$ and $W_{\fh, +} \subseteq \fh[x^{-1}]$.
    Similarly,
    $$
        W_- = W_{\fh,-} \bigoplus_{\alpha \in \Delta} [g_\alpha]E_\alpha,
    $$
    for $g_\alpha \in \{0, 1, x\}$ and
    $W_{\fh,-} \subseteq \fh[x]/x^{2}\fh[x]$.
    The invariance of $W$ under multiplications by $(x^{-1},0)$ and $(0, [x])$ implies that we can choose the representatives $p_\alpha$ and $[q_\alpha]$ in $$(p_\alpha E_\alpha, [q_\alpha]E_\alpha) \in D$$ in a way that (after a re-scaling of $f_\alpha$ and $g_\alpha$ by non-zero constants) we get
    $p_\alpha = f_\alpha$ and $[q_\alpha] = [g_\alpha]$.

    By assumption, $W_+$ is contained in a maximal order $\mathfrak{P}_i$ corresponding to a simple root $\alpha_i$. 
    Decompose 
    $$
    \fg = \fh \oplus \fl \oplus \fc \oplus \fr 
    $$ 
    as described in \cref{eq:l_c_r}.
    Let us consider an arbitrary root vector $E_\alpha \in \fg$.
    Since $W$ is complementary to $\mathfrak{D}$, we can find unique $A_\alpha \in \fg$, $B_\alpha \in \fc$ such that
    $$
        w = (A_\alpha + B_{\alpha}x, A_\alpha + (B_\alpha + E_\alpha)[x]) \in W.
    $$
    For $E_\alpha \not \in \fc$ we have $f_\alpha = a_\alpha x^{-1} + b_\alpha$ and the only way to decompose the element $w$ into a sum \cref{eq:root_spaces_arbitrary_W_L2} is
    $$
    w = (A_\alpha + B_{\alpha}x, A_\alpha + B_{\alpha}[x]) + (0, E_\alpha [x]).
    $$
    As a consequence $A_\alpha = B_\alpha = 0$ and 
    \begin{equation}%
    \label{eq:right_ideal_hrl}
        \{ 0 \} \times (\fh \oplus \fl \oplus \fr)[x] \subseteq W
    \end{equation}
    \noindent
    For an $E_\alpha \in \fc$, the element $w$ above lies in $W$ only if
    $$
        B_\alpha = \lambda_1 E_\alpha \  \text{ and } \  A_\alpha = \lambda_2 E_\alpha,
    $$
for $\lambda_1, \lambda_2 \in F$.
In case $\lambda_2 \neq 0$, we get
$(0, [x])\cdot w = (0, \lambda_2 E_\alpha [x]) \in W$.
If $\lambda_2 \neq 0$ for all $E_\alpha \in \fc$, then $\{ 0\} \times x \fg[x]/x^{2}\fg[x]  \subseteq W$ and we can again reduce everything to $L_1$ and \cref{prop:form_W_L_1}.
Otherwise, there is at least one $E_\alpha \in \fc$ for which $\lambda_2 =0$, $\lambda_1 \neq 0$ and
\begin{equation}%
\label{eq:element_w_for_lambda_2_0}
  (\lambda_1 E_\alpha x, (\lambda_1 + 1) E_\alpha x) \in W.  
\end{equation}

For the negative root $-\alpha \in \fr$ we have 
$$
(x^{-1}E_{-\alpha}, 0) = w' - (g,[g])
$$
for some unique $g \in \fg[\![x]\!]$.
Therefore, 
$$
w' = (x^{-1}E_{-\alpha} + g, g) \in W.
$$
the inclusion $W_+ \subseteq \mathfrak{P}_i$ and the explicit form \cref{eq:max_order_i_roots} of $\mathfrak{P_i}$ implies that $g = 0$ and hence
$$
x^{-1}\fr [x^{-1}] \times \{ 0 \} \subseteq W.
$$
Commuting \cref{eq:element_w_for_lambda_2_0} with $(x^{-1}E_{-\alpha}, 0)$ we obtain the containments:
$
(\lambda_1 H_\alpha, 0)$,  $(\lambda_1^2 x E_\alpha,0) \in \nolinebreak W$.
From this we can conclude that for any $E_\alpha \in \fc$ we have either $$(E_\alpha x, 0) \in W \ \textnormal{ or } \ (0, E_\alpha [x]) \in W.
$$

Take an arbitrary $E_\alpha \in \fg$ and assume
$$
(f_\alpha E_\alpha, [g_\alpha] E_\alpha) \in D.
$$
Then $f_\alpha = a_\alpha x + b_\alpha + c_\alpha x^{-1}$ with $a_\alpha = 0$ for $E_\alpha \not \in \fc$. Moreover, the arguments above show that $[g_\alpha] \in F^\times$.
Indeed, if $E_\alpha \not \in \fc$, then $f_\alpha \in F[x^{-1}]$ by the form of the maximal order and $[g_\alpha] \in F^\times$ due to the inclusion \cref{eq:right_ideal_hrl}.
For $E_\alpha \in \fc$ we have either $(E_\alpha x^{k}, 0) \in W$, $k \le 1$, leading to the contradiction $f_\alpha E_\alpha  \in I_+$ , or 
$(0, E_\alpha [x]) \in W$ and hence $[g_\alpha] \in F^\times$.

Our next step is to prove that the constant $a_\alpha$ vanishes even for $E_\alpha \in \fc$.
Assume $E_\alpha \in \fc$ is such that
$$
(f_\alpha E_\alpha, E_\alpha) = ((ax + b + cx^{-1}) E_\alpha, E_\alpha) \in D
$$
with $a \neq 0$, then by commuting
this element with $(x^{-1}E_{-\alpha}, 0) \in W$ we get
$$
(x^{-1} f_\alpha H_\alpha, 0), (x^{-1} f_\alpha^2 E_\alpha, 0) \in W.
$$
Therefore
$$
((x^{-1}f_\alpha^2 - a f_\alpha)E_\alpha,E_\alpha) \in W,
$$
where the highest power of $x$ in 
$(x^{-1}f_\alpha^2 - af_\alpha)$ is strictly smaller than in $f_\alpha$.
This means that either $$
(x^{-1}f_\alpha^2 - af_\alpha) = 0 \ \text{ or } \ (x^{-1}f_\alpha^2 - af_\alpha) = p f_\alpha $$
for a polynomial $p \in x^{-1}F[x^{-1}]$.
In the first case we immediately have $(0,E_\alpha) \in W$.
In the second case 
$$
(p,0) \cdot (f_\alpha E_\alpha, E_\alpha) = ((x^{-1}f_\alpha^2 - af_\alpha) E_\alpha, 0) \in W
$$
which again forces $(0, E_\alpha) \in W$. 
This contradiction implies that there are no pairs in $D$ of the form
$((ax + b + cx^{-1}) E_\alpha, E_\alpha)$ with $a \neq 0$.
In other words, 
$$
    D \subseteq \fg[x^{-1}] \times \fg.
$$

Let $w \coloneqq ((ax^{-1} - b)E_\alpha, E_\alpha) \in D$ with $a \neq 0$.
Then $(0, E_\alpha [x]) \in W$
and we can find unique $A \in \fg$ and $B \in \fc$ such that
$$
((E_\alpha + A) + Bx, A + B[x]) \in W.
$$
Again, this is possible only if $A = \lambda E_\alpha$ and $B = \mu E_\alpha$ for some $\lambda, \mu \in F$:
$$
((1 + \lambda + \mu x)E_\alpha, (\lambda + \mu [x]) E_\alpha) \in W.
$$
Since $f_\alpha = ax^{-1} - b$, we must have $\mu = 0$ and hence
$$
((1 + \lambda) E_\alpha, \lambda E_\alpha) \in W.
$$
For $\lambda = 0$ or $-1$ we get the inclusions $(E_\alpha, 0)$ or $(0, E_\alpha) \in W$. 
Since both inclusions contradict our choice of $w \in D$, we have $\lambda \not\in \{0, -1 \}$
and
$$
(\lambda' E_\alpha, E_\alpha) \in W
$$
for some non-zero $\lambda' \in F$.
subtracting this element from  $w$ we get
$$
((ax^{-1} - b - \lambda') E_\alpha, 0) \in W.
$$
This means that we must be able to find a polynomial $p \in F[x^{-1}]$ such that $(ax^{-1} - b - \lambda') = p (ax^{-1} - b)$.
This is possible only in the case $p = 1$ and $\lambda' = 0$, which is a contradiction.

In other words, we have shown that 
$$
D \subseteq \fg \times \fg.
$$
Now arguing precisely as in the proof of \cref{prop:gxg_splittings} we see that $D = 0$.
\end{proof}

\begin{example}
    The proposition above provides us with a strategy of constructing more examples of regular subalgebras $W \subseteq L_1$ of type $ k \ge 1$.

    Start with a regular subalgebra $V_+ \subseteq \fg(\!(x)\!)$ of type $k \ge 1$, a
    $2$-regular partition $\Delta = S_+ \sqcup S_-$ of the root
    system of $\fg$ and subspaces $\fs_\pm = \ft_\pm \oplus \fr_\pm$ of the Cartan subalgebra $\fh$ subject to the following conditions
    \begin{enumerate}
        \item $\fh = \fs_+ + \fs_-$;
        \item $\ft_+ \cap \ft_- = \{ 0 \}$;
        \item $v_\pm \coloneqq \ft_\pm \bigoplus_{\alpha \in S_\pm} \fg_\alpha$ is a regular subalgebra of $\fg$ and
        \item $[v_+, V_+] \subseteq v_+ \oplus V_+$
    \end{enumerate}
    Then the following subspace is a regular subalgebra of $L_1$ of the same type $k \ge 1$:
    $$
        W \coloneqq W_\phi \oplus \left( (v_+ \oplus V_+) \times \{0\} \right) \oplus \left( \{ 0\} \times v_- \right),
    $$
    where $W_\phi \coloneqq \{ (h, \phi(h)) \mid h \in \fr_+ \}$
    for any linear isomorphism $\phi \colon \fr_+ \to \fr_-$ without non-zero fixed points.
\end{example}

\begin{example}
    Write $\fh = FH_{\alpha_0} \oplus \fh'$, where $\alpha_0$ is the maximal root of $\fg$. 
    Then we can define
    \begin{equation}
    \begin{aligned}
        W_+ &\coloneqq (x^{-1} \fg \oplus \fn_+ \oplus FH_{\alpha_0} \oplus x \fg_{\alpha_0})[x^{-1}] \\ 
        W_- &\coloneqq \fh' \oplus \fn_- \oplus x \left( \fn_- \oplus  \fh \oplus \bigoplus_{\alpha \in \Delta_+ \setminus \{ \alpha_0\}}  \fg_\alpha \right).
    \end{aligned}
    \end{equation}
    The product 
    $W = W_+ \times W_-$ is a regular (bounded) subalgebra of $L_2$.
\end{example}

\begin{example}
    There are also examples of unbounded regular subalgebras of $L_2$. For example, for $\fg = \mathfrak{sl}(3,F)$ we can define the following spaces
    \begin{align*}
        W_+ &\coloneqq \textnormal{span}_F\{ xE_{\alpha+\beta}, x(x^{-1} - a)E_\alpha \} [x^{-1}] \oplus  \textnormal{span}_F\{ E_\beta, H_{\alpha + \beta} \}[x^{-1}] \\ 
        & \oplus \textnormal{span}_F\{(x^{-1} - a)H_\alpha, (x^{-1} - a)E_{-\beta} \} [x^{-1}] \\
        & \oplus \textnormal{span}_F\{ x^{-1}E_{-\alpha}, x^{-1}E_{-\alpha-\beta} \}[x^{-1}],
    \end{align*}
    \begin{align*}
        W_-  \coloneqq \textnormal{span}_F\{ H_{\alpha}, E_{-\alpha}, E_{-\beta}, E_{-\alpha-\beta}, xE_\alpha, xE_\beta, xH_\beta \}[x] / x^2 \fg[x]
    \end{align*}
    and set $W \coloneqq W_+ \times W_-$. Here, \(\pi = \{\alpha,\beta\} \subseteq \Delta\) are the simple roots and \(\alpha + \beta \in \Delta\) is the only non-simple positive root.
    
\end{example}

\section{Connection to Gaudin models}%
\label{sec:gaudin_models}
In this section, \(F = \bC\).
At this point, we have constructed Lie algebra decompositions \(L_m = \mathfrak{D} \oplus W\) with additional compatibility with a Cartan subalgebra \(\fh \subseteq \fg\) and stability under multiplication by \(x^{-1}\).
Moreover, we have given explicit formulas for the associated generalized \(r\)-matrices. It is known that for \(m = 0\) these generalized \(r\)-matrices give rise to Gaudin integrable models; see \cite{skrypnyk_spin_chains}. Moreover, these models are particularly well-behaved if the generalized \(r\)-matrix satisfies the additional compatibility with the Cartan subalgebra; see \cite{skrypnyk_bethe} for \(\fg = \mathfrak{gl}_n(\bC)\).

Let \(r\) be a generalized \(r\)-matrix of the form
\begin{equation}%
\label{eq:rmatrix_general_m}
    r(x,y) = \frac{y^m\Omega}{x-y} + g(x,y)
\end{equation}
which converges to some meromorphic function in some open disc around the origin. Then for any points \(u_1,\dots,u_n\) in the domain of definition of \(r\), we consider
\begin{equation}%
\label{eq:gaudin_hamiltonians}
    H_i \coloneqq \sum_{k \neq i} r(u_k,u_i)^{(ki)} + \frac{1}{2}(g(u_i,u_i)^{(ii)}+\tau(g(u_i,u_i))^{(ii)}) \in U(\fg)^{\otimes n},
\end{equation}
where \((a \otimes b)^{(ij)} \coloneqq \iota_i(a)\iota_j(b)\) for the canonical embedding \(\iota_i \colon U(\fg) \to U(\fg)^{\otimes n}\), which inserts 1 into every tensor factor except the \(i\)-th one where it inserts \(a\).

Elements \(H_i\) commute inside $U(\fg)$ and define a quantum integrable system, which is a generalization of the usual Gaudin models. 
The commutativity of $H_i$'s is proven in \cref{subsec:commutativity_of_H}.
In the general scheme of integrability, \(H_i\) correspond to quadratic invariant functions on \(\fg^{\oplus n}\). 
To obtain integrability of the model, one would need to consider the higher degree invariant functions as well. 

The classical limit of such Gaudin models, called classical Gaudin models, simply replaces \(U(\fg)\) with the symmetric algebra \(S(\fg)\), which can be understood as the space of regular functions on \(\fg^*\). In this way we obtain a classical integrable system. 

\begin{remark}%
\label{rem:special_point}
    Note that in \cref{eq:gaudin_hamiltonians} one of $u_k$'s can be equal $0$. 
    However, such a point is a special point in the sense of \cite{skrypnik_special_points}, because the $r$-matrix becomes degenerate in that point.
    In this case, the corresponding model is not really a Gaudin model, but a ``reduced'' Gaudin model.
\end{remark}

\begin{example}
    Taking \(n = 1\) and \(u = u_1\), we obtain a quantum integrable system defined by one Hamiltonian \(H = \frac{1}{2}m(g(u,u)+\tau(g(u,u)))\), where \(m(a,b) = ab\) is the multiplication map of \(U(\fg)\) (resp.\ \(S(\fg)\) in the classical limit).

    Let us assume \(a_\alpha,b_\alpha,c_\alpha \in \bC\) are chosen in such a way that  \cref{eq:reg_subalgebra_type_i} defines a subalgebra \(W \subseteq \fg(\!(x)\!)\). 
    The corresponding $r$-matrix \cref{eq:rmatrix_type_i} gives rise to the Hamiltonian
    \begin{equation*}
        \begin{split}
        H = \frac{1}{2}&\left(\psi_u(h_i)h_i + h_i\psi_u(h_i) + \sum_{\alpha \in \Delta_+^{<k_i\alpha_i}\cup \Delta_-^{<\alpha_i}}\frac{a_\alpha (E_\alpha E_{-\alpha} + E_{-\alpha}E_\alpha)}{a_\alpha u-1} \right.\\&\left.- \sum_{\alpha \in \Delta_+^{\ge k_i\alpha_i}}\frac{c_\alpha + d_\alpha -2c_\alpha d_\alpha u}{(u-c_\alpha)(u-d_\alpha)}(E_\alpha E_{-\alpha} + E_{-\alpha}E_\alpha)\right),    
        \end{split}
    \end{equation*}
    where \(\psi_u \coloneqq \frac{\phi}{u\phi-1}\).

    Let us explicitly calculate this Hamiltonian for the regular decomposition of 
    $$
    \fg = \mathfrak{sl}_2(\bC)= \textnormal{Span}_\bC\{E,H,F\} = \left(\bC E \oplus \bC H \right) \oplus \mathbb{C}F.
    $$
    For two constants \(a, b \in \bC\) we get
    \begin{align*}
        H &= \frac{a\left(\frac{1}{2}H^2 + EF + FE\right)}{2(au-1)} + \frac{b(EF + FE)}{2(bu-1)} 
        \\&= \frac{a\left(\frac{1}{2}H^2 + (E + F)^2 - (E+F)(E-F)-H\right)}{2(au-1)} + \frac{b((E + F)^2 - (E+F)(E-F)-H)}{2(bu-1)}.
    \end{align*}
\end{example}

\subsection{Commutativity of the generalized Gaudin Hamiltonians}%
\label{subsec:commutativity_of_H}
For completeness, we present a proof that the Hamiltonians \cref{eq:gaudin_hamiltonians} commute. 
This is a coordinate-free rework of the proof from \cite{skrypnyk_spin_chains}, where $m$ is equal to $0$.
Such an approach shows the commutativity of Hamiltonians for any $m \ge 0$ and in all the points, including the special one; see \cref{rem:special_point}.

Fix $1 \le i < j \le n$ and consider
\begin{align*}
    [H_i, H_j] &= \underbrace{\sum_{\substack{k=1 \\ k \neq i}}^n \sum_{\substack{\ell=1 \\ \ell \neq j}}^n [r(u_k, u_i)^{(ki)}, r(u_\ell, u_j)^{(\ell j)}]}_{S_1 \coloneqq} \\
    &+ \underbrace{\frac{1}{2}  [r(u_j, u_i)^{(ji)}, g(u_j, u_j)^{(jj)} + \tau(g(u_j, u_j))^{(jj)}]}_{S_2 \coloneqq} \\
    &+ \underbrace{ \frac{1}{2}  [g(u_i, u_i)^{(ii)} + \tau(g(u_i, u_i))^{(ii)}, r(u_\ell, u_j)^{(i j)}]}_{S_3 \coloneqq} \\
    &+ \underbrace{\frac{1}{4} [g(u_i, u_i)^{(ii)} + \tau(g(u_i, u_i))^{(ii)}, g(u_j, u_j)^{(jj)} + \tau(g(u_j, u_j))^{(jj)}]}_{S_4 \coloneqq}.
\end{align*}
The summand $S_4$ is equal to $0$ because $i \neq j$.
Terms in $S_1$ with $k \neq j, k \neq \ell$ and $\ell \neq i$ are equal $0$. 
The remaining terms are
\begin{equation}
\label{eq:formula_for_S1}
    \begin{split}
        S_1 &= \sum_{\substack{k=1 \\ k \neq i, \, j}}^n [r(u_k, u_i)^{(ki)}, r(u_k, u_j)^{(kj)}] 
    + \sum_{\substack{\ell=1 \\ \ell \neq i, \, j}}^n [r(u_j, u_i)^{(ji)}, r(u_\ell, u_j)^{(\ell j)}] + 
    \sum_{\substack{k=1 \\ k \neq i}}^n [r(u_k, u_i)^{(ki)}, r(u_i, u_j)^{(i j)}] \\
    &= \sum_{\substack{k=1 \\ k \neq i \\ k \neq j}}^n \textnormal{GCYB}^{ijk}(r(u_k, u_i)) + [r(u_j, u_i)^{(ji)}, r(u_i, u_j)^{(ij)}] \\
    &= [r(u_j, u_i)^{(ji)}, r(u_i, u_j)^{(ij)}] \\
&= \frac{1}{2} \left(m_{(0j)}([r(u_j,u_i)^{(0i)},r(u_i,u_j)^{(ij)}] + [r(u_j,u_i)^{(ji)},r(u_i,u_j)^{(i0)}]) \right.\\&\left. \hspace{1cm}  + m_{(0i)}([r(u_j,u_i)^{(ji)},r(u_i,u_j)^{(0j)}] + [r(u_j,u_i)^{(j0)},r(u_i,u_j)^{(ij)}])\right)
\end{split}
\end{equation}
where we added an auxiliary copy of \(U(\fg)\) at tensor factor in position $0$, defined
\begin{align*}
    m_{(0k)}\colon U(\fg)^{\otimes (n+1)} &\longrightarrow U(\fg)^{\otimes n} \\a_0 \otimes a_1 \otimes \dots \otimes a_n &\longmapsto a_1 \otimes \dots \otimes a_{k-1} \otimes a_{0}a_{k} \otimes a_{k} \otimes \dots \otimes a_n
\end{align*} 
and used the identity
\begin{align*}
    &\sum_{k,\ell = 1}^n[(a_k \otimes b_k)^{(ji)},(a_\ell\otimes b_\ell)^{(ij)}] = \sum_{k,\ell = 1}^n([b_k,a_\ell] \otimes a_k b_\ell + a_\ell b_k \otimes [a_k,b_\ell])^{(ij)} \\
&= \frac{1}{2}\sum_{k,\ell = 1}^n ([b_k,a_\ell]   \otimes (a_kb_\ell + b_\ell a_k + [a_k,b_\ell]) 
+ (a_\ell b_k + b_k a_\ell + [a_\ell,b_k])\otimes [a_k,b_\ell])^{(ij)} 
    \\& 
    = \frac{1}{2}\sum_{k,\ell = 1}^n([b_k,a_\ell]\otimes (a_kb_\ell + b_\ell a_k) 
    + (a_\ell b_k + b_k a_\ell)\otimes [a_k,b_\ell])^{(ij)}
    \\&=
    \frac{1}{2}\sum_{k,\ell = 1}^n \left(m_{(0j)}\left((a_k \otimes [b_k,a_\ell] \otimes b_\ell + b_\ell \otimes [b_k,a_\ell] \otimes a_k)^{(0ij)}\right) \right. \\
    &\hspace{2cm}
    \left.+ m_{(0i)}\left((a_\ell \otimes b_k \otimes [a_k,b_\ell]
    + b_k \otimes a_\ell \otimes [a_k,b_\ell])^{(0ij)}\right)\right).
\end{align*}

Now let us again take a look at the GCYBE:
\begin{align*}
    [r(u_0, u_i)^{(0i)}, r(u_0, u_j)^{(0j)}] + 
    [r(u_j, u_i)^{(ji)}, r(u_0, u_j)^{(0j)}] +
    [r(u_0, u_i)^{(0i)}, r(u_i, u_j)^{(ij)}] = 0.
\end{align*}
Using the explicit form of the $r$-matrix and using the invariance of the quadratic Casimir element $\Omega$ we can rewrite the equation above as follows
\begin{align*}
    \left[r(u_0, u_i)^{(0i)} - r(u_j, u_i)^{(0i)}, \frac{u_j^m \Omega^{(0j)}}{u_0 - u_j}\right] 
    &+ [r(u_0, u_i)^{(0i)} + r(u_j, u_i)^{(ji)}, g(u_0, u_j)^{(0j)}] \\
    &+ [r(u_0, u_i)^{(0i)}, r(u_i, u_j)^{(ij)}] = 0.
\end{align*}
Taking the limit $u_0 \to u_j$ we get
\begin{align*}
    u_j^m [\partial_{u_j}r(u_j, u_i)^{(0i)}, \Omega^{(0j)}]
    &+ [r(u_j, u_i)^{(0i)} 
    + r(u_j, u_i)^{(ji)}, g(u_j, u_j)^{(0j)}] 
    + [r(u_j, u_i)^{(0i)}, r(u_i, u_j)^{(ij)}] = 0.
\end{align*}
Swapping factors $j$ and $0$ in the equality above we get
\begin{align*}
    u_j^m [\partial_{u_j}r(u_j, u_i)^{(ji)}, \Omega^{(0j)}]
    &+ [r(u_j, u_i)^{(0i)} 
    + r(u_j, u_i)^{(ji)}, \tau(g(u_j, u_j))^{(0j)}] 
    + [r(u_j, u_i)^{(ji)}, r(u_i, u_j)^{(i0)}] = 0.
\end{align*}
Summing the last two equations and applying the multiplication $m_{(0j)}$ gives 
\begin{equation}\label{eq:formula_S2}
    \begin{split}
        S_2 &= \frac{1}{2}[r(u_j,u_i)^{(ji)},g(u_j,u_j)^{(jj)} + \tau(g(u_j,u_j))^{(jj)}] \\&= -\frac{1}{2}m_{(0j)}([r(u_j,u_i)^{(0i)},r(u_i,u_j)^{(ij)}] + [r(u_j, u_i)^{(ji)}, r(u_i, u_j)^{(i0)}])
    \end{split}
\end{equation}
Similarly, rewriting GCYBE in the form
\begin{align*}
    \left[\frac{u_i^m \Omega^{(0i)}}{u_0-u_i}, r(u_0, u_j)^{(0j)} - r(u_i, u_j)^{(0j)}\right] 
    &+ [g(u_0, u_i)^{(0i)}, r(u_0, u_j)^{(0j)} + r(u_i, u_j)^{(ij)}] \\
    &+ [r(u_j, u_i)^{(ji)}, r(u_0, u_j)^{(0j)}]
    = 0,
\end{align*}
taking the limit $u_0 \to u_i$
\begin{align*}
    u_i^m [\Omega^{(0i)}, \partial_{u_i} r(u_i, u_j)^{(0j)}] 
    &+ [g(u_i, u_i)^{(0i)}, r(u_i, u_j)^{(0j)} + r(u_i, u_j)^{(ij)}] \\
    &+ [r(u_j, u_i)^{(ji)}, r(u_i, u_j)^{(0j)}]
    = 0
\end{align*}
and swapping $i$ and $0$ factors results in
\begin{align*}
u_i^m[\Omega^{(0i)},\partial_{u_i}r(u_i,u_j)^{(ij)}] &+ [\tau(g(u_i,u_j))^{(0i)},r(u_i,u_j)^{(0j)}+r(u_i,u_j)^{(ij)}] \\& [r(u_j,u_i)^{(j0)},r(u_i,u_j)^{(ij)}] = 0.
\end{align*}
Summing these two terms and applying $m_{0i}$ we get
\begin{equation}\label{eq:formula_S3}
    \begin{split}
        S_3 &= \frac{1}{2}[g(u_i,u_i)^{(ii)} + \tau(g(u_i,u_i))^{(ii)},r(u_i,u_j)^{(ij)}] \\&= - \frac{1}{2}m_{0i}([r(u_j,u_i)^{(ji)},r(u_i,u_j)^{(0j)}] + [r(u_j,u_i)^{(j0)},r(u_i,u_j)^{(ij)}]).
    \end{split}
\end{equation}
Combining \cref{eq:formula_for_S1,eq:formula_S2,eq:formula_S3}we conclude
\begin{equation*}
    [H_i,H_j] = S_1 + S_2 + S_3 = 0.
\end{equation*}

\appendix
\section{List of notations}%
\label{sec:notations}

\begin{adjustbox}{center}
\renewcommand{\arraystretch}{1.7}
\begin{tabular}[c]{ |c|c|  }
\hline
Symbol & Meaning
\\ \hhline{|=|=|}
\(F\) & Algebraically closed field of characteristic 0
\\ \hline
\(\fg\) & Finite-dimensional simple Lie algebra over \(F\)
\\ \hline
\(\kappa\) & The Killing form on $\fg$
\\ \hline
\(\Omega\) & The quadratic Casimir element in $\fg \ot \fg$
\\ \hline
\(\fh\) & Fixed Cartan subalgebra of \(\fg\)
\\ \hline
\(\Delta = \Delta_+ \sqcup \Delta_-\) & Polarized root system of \(\fg\) with respect to \(\fh\)
\\ \hline
\(H_\alpha, E_{\pm \alpha}\) & \(E_{\pm \alpha} \in \fg_{\pm \alpha}\) for \(\alpha \in \Delta_+\) are chosen such that \(\kappa(E_\alpha,E_{-\alpha}) = 1\) and \(H_\alpha = [E_\alpha,E_{-\alpha}]\)
\\ \hline
\(\pi = \{\alpha_1,\dots,\alpha_n\}\) & Simple roots of \(\Delta = \Delta_+ \sqcup \Delta_-\)
\\ \hline
\(\alpha_0 = \sum_{i = 1}^n k_i\alpha_i\) & Maximal root in \(\Delta\) and its expansion into simple roots
\\ \hline
\(V^*\) & Dual of a vector space \(V\)
\\ \hline 
\(V[x]\) & Polynomials in one variable with coefficients in a vector space \(V\) 
\\ \hline
\(V[\![x]\!]\) & Formal Taylor power series in one variable with coefficients in a vector space \(V\) 
\\ \hline
\(V(\!(x)\!)\) & Formal Laurent power series in one variable with coefficients in a vector space \(V\) 
\\ \hline
\(V^a,V^{a,b}\) & For \(a,b\in F\), \(V^a = (x^{-1}-a)V[x^{-1}]\) and \(V^{a,b} = x(x^{-1}-a)(x^{-1}-b)V[x^{-1}]\)
\\ \hline
\(L_m\) & The Lie algebra \(\fg(\!(x)\!) \times \fg[x]/x^m\fg[x]\)
\\ \hline
\(\mathfrak{d}\) & The subalgebra \(\{(a,a) \mid a \in \fg \} \) of $\fg \times \fg$
\\ \hline
\(\mathfrak{D}\) & The subalgebra \(\{(f,[f])\mid f \in \fg[\![x]\!]\}\) of \(L_m\)
\\ \hline
\(\Delta_\pm^{<m \alpha_i}\) & \(\{
   \alpha \in \Delta_\pm \mid \alpha = \pm \sum_{i=1}^n c_i \alpha_i, \ 0 \le c_i < m\} \subseteq \Delta_\pm\)
\\ \hline
    \(\Delta_{\pm}^{\ge m\alpha_i}\) & \(\{\alpha \in \Delta_\pm \mid \alpha = \pm \sum_{i=1}^n c_i \alpha_i, \ c_i \ge m \} \subseteq \Delta_\pm\)
\\ \hline
    \(\mathfrak{P}_i\) &  Maximal parabolic subalgebra of $\fg(\!(x)\!)$ corresponding to $\alpha_i \in \{\alpha_0, \dots, \alpha_n \}$
\\ \hline
    \(W_\phi\) &  for a linear map $\phi \colon \fh \to \fh$ it is $\{x^{-n}(x^{-1}h - \phi(h)) \mid h \in \fh,n\in\bZ_{\ge 0}\}$
\\ \hline
\end{tabular}
\end{adjustbox}

\newpage
\pagestyle{plain}
\section*{Statements and Declarations
}
The authors have no competing interests to declare that are relevant to the content of this article.

\printbibliography

@article{skrypnyk_bethe,
title = {``Generalized'' algebraic Bethe ansatz, Gaudin-type models and Zp-graded classical r-matrices},
journal = {Nuclear Physics B},
volume = {913},
pages = {327-356},
year = {2016},
author = {T. Skrypnyk},
}

@article {maximov_regular,
    AUTHOR = {Maximov, S.},
     TITLE = {Regular decompositions of finite root systems and simple Lie algebras},
      YEAR = {2023}
}

@article {dokovic_check_hee,
    AUTHOR = {Dokovi\'{c}, D. and Check, P. and H\'{e}e, J.-Y.},
     TITLE = {On closed subsets of root systems},
   JOURNAL = {Canad. Math. Bull.},
    VOLUME = {37},
      YEAR = {1994}
}

@article{golubchik_sokolov_one_more_type,
    AUTHOR = {Golubchik, I. and Sokolov, V.},
     TITLE = {One more type of classical {Y}ang-{B}axter equation},
   JOURNAL = {Funktsional. Anal. i Prilozhen.},
    VOLUME = {34},
      YEAR = {2000},
    NUMBER = {4},
     PAGES = {75--78}
}

@article{abedin_maximov_stolin_quasibialgebras,
  title={Topological Manin pairs and $(n, s)$-type series},
  author={Abedin, R. and Maximov, S. and Stolin, A.},
  journal={Letters in Mathematical Physics},
  volume={113},
  number={3},
  pages={1--28},
  year={2023}
}

@phdthesis{abedin_thesis,
    author = {Abedin, R.},
	title = {{Algebraic geometry of the classical Yang-Baxter equation and its generalizations}},
	year = {2022},
	url = {https://digital.ub.uni-paderborn.de/hs/content/titleinfo/6660394}
}

@article{peterson_kac,
 author = {Peterson, D. and Kac, V.},
 journal = {Proceedings of the National Academy of Sciences of the United States of America},
 number = {6},
 pages = {1778--1782},
 title = {Infinite Flag Varieties and Conjugacy Theorems},
 volume = {80},
 year = {1983}
}

@article {skrypnik_special_points,
    AUTHOR = {Skrypnyk, T.},
     TITLE = {Reductions in finite-dimensional integrable systems and
              special points of classical {$r$}-matrices},
  JOURNAL = {Journal of Mathematical Physics},
    VOLUME = {57},
      YEAR = {2016},
    NUMBER = {12}
}

@article{skrypnyk_spin_chains,
title = {Integrable quantum spin chains, non-skew symmetric r-matrices and quasigraded Lie algebras},
journal = {Journal of Geometry and Physics},
volume = {57},
number = {1},
pages = {53-67},
year = {2006},
author = {Skrypnyk, T.},
}

@book{adler_moerbeke_vanhaecke,
  title={Algebraic Integrability, Painlev{\'e} Geometry and Lie Algebras},
  author={Adler, M. and van Moerbeke, P. and Vanhaecke, P.},
  series={A Series of Modern Surveys in Mathematics},
  year={2004},
  publisher={Springer Berlin Heidelberg}
}

@article{abedin_maximov_classical_twists,
title = {Classification of classical twists of the standard Lie bialgebra structure on a loop algebra},
journal = {Journal of Geometry and Physics},
year = {2021},
volume = {164},
pages = {104149},
author = {Abedin, R. and Maximov, S.}
}

@article{abedin_burban_geometrization_trigonometric,
    title= {Algebraic Geometry of Lie Bialgebras Defined by Solutions of the Classical Yang–Baxter Equation},
    author={Abedin, R. and Burban, I.},
    journal={Communications in Mathematical Physics},
    year={2021}
}

@article{golubchik_sokolov_integrable_top_like_systems,
    author={Golubchik, I. Z. and Sokolov, V. V.},
	year = "2004",
	title = {Factorization of the Loop Algebra and Integrable Toplike Systems},
	journal = {Theoretical and Mathematical Physics},
	volume = "141",
	pages = "1329--1347"
}

@article{skrypnyk_new_integrable_gaudin_type_systems,
author = {Skrypnyk, T.},
year = {2005},
title = {New integrable Gaudin-type systems, classical r-matrices and quasigraded Lie algebras},
journal = {Physics Letters A},
volume = "334",
number = "5--6",
pages = "390--399"
}

@book{babelon_bernard_talon,
     title={Introduction to Classical Integrable Systems}, 
     publisher={Cambridge University Press}, 
     author={Babelon, O. and Bernard, D. and Talon, M.}, 
     year={2003}
}

@article{kac_wang,
    title="On Automorphisms of Kac-Moody Algebras and Groups",
    author="Kac, V. and Wang, S.",
    journal="Advances in Mathematics 92, 129-195",
    year="1992",
    volume="92",
    pages="129--195",
}

@ARTICLE{KPSST,
       author = {Khoroshkin, S. and Pop, I. and Samsonov, M. and
         Stolin, A. and Tolstoy, V.},
        title = "{On Some Lie Bialgebra Structures on Polynomial Algebras and their Quantization}",
      journal = {Communications in Mathematical Physics},
      volume="282",
      pages="625--662",
         year = "2008",
}

@article{belavin_drinfeld_solutions_of_CYBE_paper,
  	title="Solutions of the classical Yang-Baxter equation for simple Lie algebras",
  	author="Belavin, A. and Drinfeld, V.",
	journal = "Funct. Anal. Appl.",
	year="1983",
    volume="16",
    number="3",
}

@article{stolin_maximal_orders,
	author = "Stolin, A.",
	journal = "Communications in Mathematical Physics",
	title = "On rational solutions of Yang-Baxter equations. Maximal orders in loop algebra",
	volume = "141",
	pages= "533--548",
	year = "1991"
}

@article{stolin_sln,
 	author = "Stolin, A.",
	journal = "Mathematica Scandinavica",
	title = "On rational solutions of Yang-Baxter equation for $\mathfrak{sl}(n)$",
	year = "1991",
	volume ="69",
	pages = "57-80",
}

@book{chari_pressley,
	title="A Guide to Quantum Groups",
	author="Chari, V. and Pressley, A.",
	year="1995",
	publisher="Cambridge University Press"
}

\end{document}